\documentclass[]{amsart}

\usepackage{amsmath}
\usepackage{amsthm}
\usepackage{amsfonts}
\usepackage{amssymb}
\usepackage[shortlabels]{enumitem}
\usepackage{todonotes}
\usepackage{float}
\usepackage{mathtools}
\usepackage{csquotes}
\usepackage{changes}
\usepackage{comment}
\usepackage{tikz-cd}
\tikzcdset{every label/.append style = {font = \normalsize}}

\usepackage[style=numeric,sorting=nyt,backend=bibtex8, maxbibnames=99]{biblatex}
\bibliography{reference.bib}
\defbibheading{bibliography}[\refname]{%
  \section*{#1}%
}

\usepackage{hyperref}
\hypersetup{colorlinks=true, citecolor=blue,urlcolor=black, linkcolor=blue}

\usepackage{graphicx}

\makeatletter
\newcommand{\oast}{\mathbin{\mathpalette\make@circled\ast}}
\newcommand{\make@circled}[2]{%
  \ooalign{$\m@th#1\smallbigcirc{#1}$\cr\hidewidth$\m@th#1#2$\hidewidth\cr}%
}
\newcommand{\smallbigcirc}[1]{%
  \vcenter{\hbox{\scalebox{0.77778}{$\m@th#1\bigcirc$}}}%
}
\makeatother

\newenvironment{why}[1][Proof]{\proof[#1]\mbox{}}{\endproof}
\newtheorem{theorem}{Theorem}[section]
\newtheorem{mainresult}{Theorem}

\newtheorem{maintheorem}[mainresult]{Theorem}
\newtheorem{maincorollary}[mainresult]{Corollary}
\newtheorem*{theorem*}{Theorem}
\newtheorem{proposition}[theorem]{Proposition}
\newtheorem*{Dproblem}{Ditor's Problem}
\newtheorem*{Dproblem2}{Ditor's Problem 2}
\newtheorem{corollary}[theorem]{Corollary}
\newtheorem*{corollary*}{Corollary}
\newtheorem{lemma}[theorem]{Lemma}
\newtheorem{claim}{Claim}[theorem]
\newtheorem*{subclaim*}{Subclaim}

\theoremstyle{definition}

\newtheorem{question}{Question}
\newtheorem{conjecture}[question]{Conjecture}
\newtheorem*{conjecture*}{Conjecture}
\newtheorem*{question*}{Question}
\newtheorem*{questionA}{Ditor's Question A}
\newtheorem{definition}[theorem]{Definition}

\theoremstyle{remark}

\begin{document}

\title{A solution to Ditor's problem}

\author{Lorenzo Notaro}
\address{University of Vienna\\
Institute of Mathematics\\
Kurt G\"{o}del research center\\
Kolingasse 14--16\\
1090 Vienna, Austria}
\curraddr{}
\email{lorenzo.notaro@univie.ac.at}

\begin{abstract}
We settle the long-standing open question whether there exists a $3$-ladder of cardinality $\aleph_2$. Given a positive integer $n$, an $n$-ladder is a lower finite lattice whose elements have at most $n$ lower covers. In 1984, Ditor proved that every $n$-ladder has cardinality at most $\aleph_{n-1}$, and that this cardinal bound is sharp for $n = 1,2$. He then raised the question of whether the bound is attained for $n\ge 3$ as well. An affirmative answer is known to be consistent with $\mathsf{ZFC}$. We prove, relative to the consistency of a Mahlo cardinal, that the question is independent of $\mathsf{ZFC}$. More precisely, we show that the nonexistence of a $3$-ladder of cardinality $\aleph_2$ is equiconsistent with a Mahlo cardinal.
\end{abstract}

\thanks{This research was funded in whole or in part by the Austrian Science Fund (FWF) \href{https://www.fwf.ac.at/en/research-radar/10.55776/ESP1829225}{10.55776/ESP1829225}. For open access purposes, the author has applied a CC BY public copyright license to any author accepted manuscript version arising from this submission.}

\subjclass[2020]{Primary 03E35, Secondary 03E05, 03E55, 06A07}
\keywords{lattice, join-semilattice, lower cover, breadth, ladder, Ditor's problem, forcing, Mahlo cardinal, Lévy collapse}
\maketitle

Given a positive integer $n$, an \emph{$n$-ladder} is a lower finite lattice whose elements have at most $n$ lower covers. The notion of an $n$-ladder was introduced independently by Ditor~\cite{MR0732199} and Dobbertin~\cite{MR862871} under different names\footnote{Ditor called these lattices \emph{$n$-lattices}, whereas Dobbertin called them \emph{$n$-frames}. We follow the terminology of Gr\"{a}tzer, Lakser, and Wehrung \cite{MR1768850}.}.

Ditor proved that every $n$-ladder has breadth at most $n$. He further proved that the breadth of a join-semilattice, together with the cardinalities of its principal ideals, yields an upper bound on the cardinality of the join-semilattice itself (Theorem~\ref{thm:ditor}). A particular instance of this result is that every lower finite lattice of breadth at most $n$, and hence every $n$-ladder, has cardinality at most $\aleph_{n-1}$; see Section~\ref{sec:prel:ditor}. He then asked whether this bound is attained by $n$-ladders~\cite[Question B]{MR0732199}:

\begin{question*}
For every $n > 0$, is there an $n$-ladder of cardinality $\aleph_{n-1}$?
\end{question*}

The case $n=1$ is immediate: $\omega$ with its usual ordering is a $1$-ladder of cardinality $\aleph_0$. Ditor also gave a positive answer for $n = 2$ by constructing a $2$-ladder of cardinality $\aleph_1$. Since then, $2$-ladders have been used primarily in representation problems in universal algebra, particularly for structures of cardinality $\leq \aleph_1$ (e.g., \cite{MR862871,MR1768850, MR2309879, MR1800815}). We refer the reader to \cite{MR2926318} for a discussion of the obstacles to obtaining analogous applications for $n$-ladders with $n \ge 3$; no such applications are currently known.

Ditor left his question open for $n\ge 3$ and singled out the case $n = 3$ \cite[Problem~1]{MR0732199}:

\begin{Dproblem}
Is there a $3$-ladder of cardinality $\aleph_2$?
\end{Dproblem}

This problem was later recorded in Gr\"{a}tzer’s monograph in the context of congruence representation problems \cite[\S 4.9]{MR2768581}; see also \cite[\S 5]{MR2926318}.

Wehrung \cite{MR2609217} gave the first consistent positive answer to Ditor's Problem. He proved that the existence of a $3$-ladder of cardinality $\aleph_2$ follows from either of two independent set-theoretic assumptions: $\mathsf{MA}_{\omega_1}(\aleph_1\text{-precaliber})$, that is, $\mathsf{MA}_{\omega_1}$ restricted to forcings of precaliber $\aleph_1$; and the existence of an $(\omega_1, 1)$-morass. 

It was later shown in \cite{MR4993406} that, for every $n \ge 3$, the existence of an $n$-ladder of cardinality $\aleph_{n-1}$ follows from $\square_{\omega_1} + \square_{\omega_2} +\ldots + \square_{\omega_{n-2}}$, i.e., from Jensen's $\square_\kappa$ holding at the first $n-2$ uncountable cardinals. In particular, Ditor's Problem has a positive answer under $\square_{\omega_1}$. Moreover, since the axiom of constructibility $\mathsf{V=L}$ implies $\square_\kappa$ for every uncountable cardinal $\kappa$, we conclude that Ditor's question has a positive answer for every $n > 0$ under $\mathsf{V=L}$. 

A different consistent positive answer for all $n > 0$ was obtained in \cite{notaro2026maximalladders}, where it is shown that $\text{Add}(\omega, \omega_\omega)$---that is, Cohen's forcing for adding $\aleph_\omega$ many Cohen reals---forces the existence of $n$-ladders of cardinality $\aleph_{n-1}$ for every $n > 0$. The same paper introduces maximal $n$-ladders, a notion motivated by the work of Ditor and Wehrung and closely connected with Ditor's Problem; more on this in Section~\ref{sec:questions}.

In this paper, we settle Ditor's Problem by proving, relative to the consistency of a Mahlo cardinal, that it is independent of $\mathsf{ZFC}$. In fact, we establish the following stronger result.
\begin{maintheorem}\label{thm:main}
Relative to the consistency of a Mahlo cardinal, it is consistent that, for every $n \ge 3$, there are no lower finite lattices of breadth at most $n$ and cardinality $\aleph_{n-1}$.
\end{maintheorem}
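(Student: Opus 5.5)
The model will be the Lévy collapse $\mathrm{Col}(\omega_1,<\kappa)$ of a Mahlo cardinal $\kappa$ to $\aleph_2$. This is the standard model where $\square_{\omega_1}$ fails badly: every stationary subset of $\kappa\cap\mathrm{cof}(\omega)$ reflects, and there is strong approachability/guessing-type failure. Since $\square_{\omega_1}$ yields a $3$-ladder of size $\aleph_2$, some large cardinal strength is needed, and Mahlo is the natural candidate. The statement for all $n\ge3$ should reduce to $n=3$ by an induction: a lower finite lattice $L$ of breadth $\le n$ and size $\aleph_{n-1}$ contains an element, or a sublattice-like piece, yielding one of breadth $\le n-1$ and size $\aleph_{n-2}$.

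For the reduction I would imitate Ditor's proof of Theorem~\ref{thm:ditor}. Write $L$ as an increasing union of $\aleph_{n-2}$-sized subsemilattices $L_\alpha$, $\alpha<\omega_{n-1}$. If $L_\alpha$ is an ideal-closed substructure, then for $x\notin L_\alpha$ the set $\{y\in L_\alpha: y\le x\}$ is directed with a largest element. One checks that the elements above a fixed $L_\alpha$ lie in a quotient of breadth $\le n-1$, which is still lower finite in the appropriate sense. Thus the case $n=3$ is the core, and the induction gives nonexistence for all $n\ge3$. This needs Ditor-style breadth-dropping, and may instead require proving a stronger inductive hypothesis about "relative" breadth (families of principal ideals) rather than plain lattices. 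I would formulate the inductive statement as: no lower finite join-semilattice of breadth $\le n$ has a subset of size $\aleph_{n-1}$ all of whose principal ideals are finite.

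The core step is to show that in $V[G]$ there is no lower finite lattice $L$ of breadth $3$ and size $\aleph_2=\kappa$. Suppose $\dot L$ is a name for one on $\kappa$. Using Mahloness, find an inaccessible $\lambda<\kappa$ such that $\dot L\cap\lambda$ is a $\mathrm{Col}(\omega_1,<\lambda)$-name and $L\cap\lambda$ is a sublattice closed under meets and lower covers. Such $\lambda$ exist on a club intersected with the stationary set of inaccessibles. Then $L\cap\lambda$ lies in $V[G_\lambda]$, where it is a lower finite lattice of breadth $3$ and size $\aleph_2^{V[G_\lambda]}=\lambda$. Now pick a new element $x\in L\setminus\lambda$ added by the tail forcing $\mathrm{Col}(\omega_1,[\lambda,\kappa))$, which is $\sigma$-closed. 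The map $a\mapsto x\wedge a$ restricted to $L\cap\lambda$ lands in the finite set $\downarrow x$. This gives a partition of $L\cap\lambda$ into finitely many pieces, each of which is a set in $V[G_\lambda]$'s extension determined by the tail. Moreover, $\{a\in L\cap\lambda:a\le x\}$ is finite, and breadth $3$ constrains joins $x\vee a$.

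The contradiction should come from a branch/approximation argument. Working in $V[G_\lambda]$, consider for each $a\in L\cap\lambda$ the "trace" of $x$, meaning which elements of $L\cap\lambda$ lie below $x\vee a$. By $\sigma$-closure of the tail and the fact that $L\cap\lambda$ has size $\lambda=\omega_2$ there, one can build countable approximations and derive a cofinal branch through a tree of height $\omega_2$ with levels of size $\aleph_1$. The tree property at $\omega_2$ holds in the tail, because of the Mitchell-like / Lévy properties of $\mathrm{Col}(\omega_1,<\lambda)$ reflecting down; alternatively, the branch can be shown to already be in $V[G_\lambda]$ because $\sigma$-closed forcing adds no new branches to such trees of countable cofinality approximations. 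Breadth $3$ then forces an infinite descending sequence or an element of $L$ with infinitely many predecessors, which is the contradiction.

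I expect this last step to be the main obstacle: pinning down exactly which combinatorial object the putative $3$-ladder induces. It might be a non-reflecting stationary set or a "thin" $\omega_1$-approximated structure on $\omega_2$. The task is then to show that Lévy-collapsing a Mahlo kills all such objects. Conversely, equiconsistency would be derived from a $3$-ladder existing under $\square_{\omega_1}$, which holds if $\omega_2$ is not Mahlo in $L$. That direction, however, is not required for the stated theorem.
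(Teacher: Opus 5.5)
Your model and your reflection setup are exactly the paper's: an inaccessible $\lambda<\kappa$ with $L\cap\lambda$ an ideal lying in $\mathsf{V}[G_\lambda]$, and a new point $x\notin\lambda$. What follows, however, is not an argument, and the tools you reach for are not available in this model.

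\emph{The core step fails as written.} The tree property at $\omega_2$ fails after $\mathrm{Coll}(\omega_1,{<}\kappa)$. The collapse forces $\omega_1^{<\omega_1}=\omega_1$, hence $\mathsf{CH}$, and by Specker's theorem $\mathsf{CH}$ gives a special $\aleph_2$-Aronszajn tree. The tree property at $\omega_2$ needs a weakly compact and Mitchell-style forcing, not a Lévy collapse of a Mahlo. Your fallback, that the $\sigma$-closed tail adds no new branches, has no justification here: the tail collapses $\lambda=\omega_2^{\mathsf{V}[G_\lambda]}$ itself. Even granting a branch, nothing explains why breadth $3$ would then yield an infinite descending sequence or an element with infinitely many predecessors. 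The observation that $a\mapsto x\wedge a$ has finite range is true but carries no information. As you concede, the combinatorial object induced by the lattice, and the reason the collapse destroys it, are missing; that is essentially the whole proof.

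\emph{What the missing argument needs.} Your ``trace'' is in fact the right object. The set of elements of $L\cap\lambda$ below $x\vee a$ is ${\downarrow}\pi_\lambda(x\vee a)$, so the trace is the projected upper cone $C=\pi_\lambda[L\uparrow x]$. The missing work is the analysis of this set.
\begin{enumerate}
\item By Lemma~\ref{lemma:combproj} and Ditor's Lemma~\ref{lemma:basicproj}, $C$ is a cofinal \emph{meet}-subsemilattice of $L\cap\lambda$. In its induced order it is a join-semilattice of breadth $2$, so by Ditor's bound (b) all its proper ideals are countable.
\item This gives the covering property of Proposition~\ref{prop:covering} for the name $\dot C$ added by the tail.
\item For countable ideals $I$ one then studies the filter $\mathcal{F}_I$ generated by projected upper cones. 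A local-breadth analysis of $L/I$ together with Ditor's bound produces a chain in $\mathcal{F}_I$ (Theorem~\ref{thm:chainprojection}).
\item Mahlo reflection plus a density argument built on the covering property shows that $\mathcal{F}_\omega$ is non-meager (Proposition~\ref{prop:nonmeager}).
\item A second reflection, using $\mathsf{CH}$ to bound the witnesses, to an inaccessible $\mu$, together with Rudin--Blass monotonicity (Proposition~\ref{prop:rudinblass}), makes $\mathcal{F}_I$ non-meager in $\mathsf{V}[G\upharpoonright\mu]$ for $I=\mu\cap M$.
\item Laflamme's game characterization, compared with an auxiliary game on conditions in $M$ that again uses the covering property, turns non-meagerness into a descending sequence in $M$. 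Its union $p_\omega$ forces $\dot C\cap I$ to be cofinal in $I$, yet forces $\dot C$ to miss some $\pi_I[\uparrow y]\in\mathcal{F}_I$.
\item Now take $z\succeq y$ and $b\in I$ with $\pi_I(z)\preceq b$, both forced into $\dot C$ by an extension of $p_\omega$. Meet-closure gives $\pi_I(z)=z\wedge b\in\dot C$, contradicting $\pi_I(z)\in\pi_I[\uparrow y]$.
\end{enumerate}

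\emph{The reduction to $n=3$ also needs repair.} The quotient $L/L_\alpha$ does have smaller breadth. But it cannot be lower finite: it still has cardinality $\aleph_{n-1}$, which Ditor's bound forbids for a lower finite semilattice of breadth $\le n-1$. The right object is $x\vee I$, for an ideal $I$ of size $\aleph_{n-2}$ and $x\notin I$. This is a lower finite lattice of cardinality $|I|$, and by Lemma~\ref{lemma:basicproj} it has breadth at most $n-1$ (Corollary~\ref{cor:induction}).
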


The proof reduces to the case $n=3$. We show that, if $\kappa$ is a Mahlo cardinal, then  in $\mathsf{V}^{\mathrm{Coll}(\omega_1,{<}\kappa)}$ there are no lower finite lattices of breadth $3$ and cardinality $\aleph_2$ (Theorem~\ref{thm:maind}); the conclusion for every $n\ge3$ then follows (see Corollary~\ref{cor:induction}).

The large cardinal hypothesis is optimal in consistency strength. Indeed, the existence of a $3$-ladder of cardinality $\aleph_{2}$  follows from $\square_{\omega_1}$. Hence, in any model of $\mathsf{ZFC}$ in which no such ladder exists, $\square_{\omega_1}$ must fail.  The failure of $\square_{\omega_1}$ is well known to imply that $\omega_2$ is a Mahlo cardinal in the constructible universe $\mathsf{L}$ \cite{MR750828}. Combining this observation with Theorem~\ref{thm:main} yields the following equiconsistency result. 

\begin{maincorollary}\label{cor:main}
The following theories are equiconsistent:
\begin{enumerate}[label={\upshape (\arabic*)}]
\item $\mathsf{ZFC}+ ``$There is a Mahlo cardinal".
\item $\mathsf{ZFC}+ ``$There are no lower finite lattices of breadth $3$ and cardinality $\aleph_2$". 
\item $\mathsf{ZFC}+ ``$There are no $3$-ladders of cardinality $\aleph_2$". 
\end{enumerate}
\end{maincorollary}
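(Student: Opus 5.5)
We prove the equiconsistency by establishing the cycle of implications (1) $\Rightarrow$ (2) $\Rightarrow$ (3) $\Rightarrow$ (1) at the level of consistency.

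For (1) $\Rightarrow$ (2), we use Theorem~\ref{thm:main}, or more precisely its instance $n=3$ (Theorem~\ref{thm:maind}). If $\mathsf{ZFC}$ + ``there is a Mahlo cardinal $\kappa$'' is consistent, then forcing with $\mathrm{Coll}(\omega_1,{<}\kappa)$ over a model of it gives a model of $\mathsf{ZFC}$ in which $\kappa=\aleph_2$. In that model there is no lower finite lattice of breadth $3$ and cardinality $\aleph_2$. Strictly, the theorem is stated for breadth at most $3$, but that covers breadth exactly $3$ as well.

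For (2) $\Rightarrow$ (3), no forcing is needed; the implication holds outright in $\mathsf{ZFC}$. By Ditor's result quoted earlier, every $3$-ladder has breadth at most $3$. So a $3$-ladder of cardinality $\aleph_2$ is a lower finite lattice of breadth at most $3$ and cardinality $\aleph_2$. Its breadth cannot be at most $2$, since by Theorem~\ref{thm:ditor} a lower finite lattice of breadth at most $2$ has cardinality at most $\aleph_1$. Hence its breadth is exactly $3$, and any model of (2) is a model of (3).

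For (3) $\Rightarrow$ (1), we argue by contraposition inside a model of (3). By \cite{MR4993406}, $\square_{\omega_1}$ implies the existence of a $3$-ladder of cardinality $\aleph_2$, so in a model of (3) the principle $\square_{\omega_1}$ fails. By the classical result \cite{MR750828} (Todorcevic/Jensen), if $\square_{\omega_1}$ fails then $\omega_2^{\mathsf V}$ is Mahlo in $\mathsf L$. Therefore $\mathsf L$ of that model satisfies $\mathsf{ZFC}$ + ``there is a Mahlo cardinal'', and (1) is consistent relative to (3).

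The only substantive step is (1) $\Rightarrow$ (2), which is the main theorem and is assumed here. The remaining steps just combine results stated earlier. The one point to be careful about is the breadth-exactly-$3$ phrasing in (2), which is handled by the cardinality bound from Theorem~\ref{thm:ditor}.
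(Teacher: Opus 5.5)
Your proposal is correct and follows the paper's own argument. The paper proves (1)$\Rightarrow$(2) by collapsing a Mahlo cardinal with $\mathrm{Coll}(\omega_1,{<}\kappa)$ (Theorem~\ref{thm:maind}). It gets (2)$\Rightarrow$(3) outright in $\mathsf{ZFC}$, using that $3$-ladders have breadth at most $3$, hence exactly $3$ at cardinality $\aleph_2$ by Ditor's bound. It gets (3)$\Rightarrow$(1) because $\square_{\omega_1}$ yields such a ladder, and the failure of $\square_{\omega_1}$ makes $\omega_2$ Mahlo in $\mathsf{L}$.
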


Section~\ref{sec:prel} collects the required preliminary material on the breadth of join-semilattices, Ditor's cardinal bound, and Laflamme's game-theoretic characterization of meager filters.  In this section, we also explain how the formulation of Theorem~\ref{thm:main} in terms of breadth, rather than only in terms of $n$-ladders, settles an instance of a more general question posed by Ditor.

In Section~\ref{sec:quotient}, we introduce a local version of the breadth invariant and study its relationship to the breadth of quotients induced by ideals. As a consequence, we show that if a join-semilattice attains Ditor's cardinal bound, then, for every sufficiently small ideal $I$, there is an element $x$ such that $x\vee I$ is a chain (Theorem~\ref{thm:chain}).

In Section~\ref{sec:proj}, we introduce the key notion of projected upper cones and the associated filters. We prove Theorem~\ref{thm:main} in Section~\ref{sec:main} and conclude with some open questions in Section~\ref{sec:questions}.

\section{Preliminaries}\label{sec:prel}
\subsection{Set Theory}\label{sec:prel:set}
The monographs \cites{MR1940513, MR756630} are our references for all classical definitions and notation in set theory. Given a set $X$ and a (possibly finite) cardinal $\kappa$, $[X]^{\kappa}$ and $[X]^{\le \kappa}$ denote, respectively, the family of all subsets of $X$ of cardinality $\kappa$ and the family of all subsets of $X$ of cardinality ${\le}\kappa$. Given two sets $X$ and $Y$, we say that $X$ is \emph{almost included in} $Y$, and write $X \subseteq^* Y$, if $X \setminus Y$ is finite. Similarly, $X =^* Y$ means that $(X \setminus Y) \cup (Y \setminus X)$ is finite. Given two finite sequences $s$ and $t$, we denote by $s^\smallfrown t$ the concatenation of $s$ and $t$.

A cardinal $\kappa$ is  \emph{(strongly) inaccessible} if $\kappa$ is an uncountable regular cardinal such that $2^{\lambda} < \kappa$ for all $\lambda < \kappa$. An inaccessible cardinal $\kappa$ is \emph{Mahlo} if the set
\[
\{\lambda < \kappa : \lambda \text{ is an inaccessible cardinal}\}
\] 
is stationary in $\kappa$.

Now let us recall some basic properties of Lévy's collapse forcing. Given two cardinals $\delta < \kappa$, we denote by $\mathrm{Coll}(\delta, {<}\kappa)$ the forcing notion whose conditions are functions $p$ with $ \mathrm{dom}(p) \subseteq \kappa \times \delta$ such that
\begin{enumerate}
\item $|\mathrm{dom}(p)| < \delta$,
\item $p(\alpha, \xi) < \alpha$ for each $(\alpha, \xi) \in \mathrm{dom}(p)$,
\end{enumerate}
ordered by reverse inclusion: $p \le q$ if and only if $p \supseteq q$. Here are the basic properties of this forcing---see, e.g., \cite[\S 14]{MR2768691} and \cite[Theorem 15.22]{MR1940513}:
\begin{lemma}\label{lemma:collapse}
Given two regular cardinals $\delta < \kappa$, with $\kappa$ inaccessible, the following hold:
\begin{enumerate}[label={\upshape (\arabic*)}]
\item $\mathrm{Coll}(\delta, {<}\kappa)$ has the $\kappa$-cc.
\item For every decreasing sequence $\langle p_\alpha : \alpha < \eta\rangle$ in $\mathrm{Coll}(\delta, {<}\kappa)$ with $\eta < \delta$, the union $\bigcup_{\alpha < \eta} p_\alpha$ is still an element of $\mathrm{Coll}(\delta, {<}\kappa)$. In particular, $\mathrm{Coll}(\delta, {<}\kappa)$ is $\delta$-closed.
\item For every $\delta$-closed separative forcing $\mathbb{P}$ with $|\mathbb{P}| < \kappa$, $\mathrm{Coll}(\delta, {<}\kappa)$ is equivalent to $\mathbb{P} \ast \dot{\mathrm{Coll}}(\delta, {<}\kappa)$, that is, their Boolean completions are isomorphic.
\item $\mathrm{Coll}(\delta, {<}\kappa)$ forces $\delta^{<\delta} = \delta$.
\end{enumerate}
\end{lemma}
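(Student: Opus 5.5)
We verify the four items of Lemma~\ref{lemma:collapse} in order, working directly from the definition of $\mathrm{Coll}(\delta,{<}\kappa)$ as partial functions of size ${<}\delta$.

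For (2), let $\langle p_\alpha:\alpha<\eta\rangle$ be decreasing with $\eta<\delta$. Since $p_\beta\supseteq p_\alpha$ for $\alpha<\beta$, the $p_\alpha$ are pairwise compatible functions, so $p=\bigcup_{\alpha<\eta}p_\alpha$ is a function. Its domain is a union of $\eta<\delta$ sets, each of size ${<}\delta$, so by regularity of $\delta$ we get $|\mathrm{dom}(p)|<\delta$. The requirement $p(\alpha,\xi)<\alpha$ is inherited pointwise. Hence $p$ is a condition extending every $p_\alpha$, and $\delta$-closure follows.

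For (1), take $\{p_i:i<\kappa\}$ and apply the $\Delta$-system lemma to the domains. Each domain has size ${<}\delta$, and for $\mu<\kappa$ we have $\mu^{<\delta}<\kappa$ by inaccessibility. So there are $X\in[\kappa]^\kappa$ and a root $r$ with $\mathrm{dom}(p_i)\cap\mathrm{dom}(p_j)=r$ for distinct $i,j\in X$. Since $|r|<\delta$, we have $r\subseteq\gamma\times\delta$ for some $\gamma<\kappa$. The functions $p_i\restriction r$ take values below $\gamma$, so there are at most $\gamma^{|r|}<\kappa$ of them. By pigeonhole, two of the $p_i$ with $i\in X$ agree on $r$, and these two are compatible.

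For (4), we first observe that $\delta$-closure gives two facts. No new subsets of $\delta$ of size ${<}\delta$ are added, and $\delta$ is preserved. Each $\alpha$ with $\delta\le\alpha<\kappa$ is collapsed to $\delta$, via the generic surjection $g(\alpha,\cdot):\delta\to\alpha$ given by density. Hence $\kappa=\delta^+$ in the extension, and $\delta^{<\delta}$ is the same as in $\mathsf V$, since no new bounded sequences are added. In $\mathsf V$ we have $\delta^{<\delta}<\kappa$, so in the extension $\delta^{<\delta}\le\delta$.

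Item (3) is the main obstacle. The plan is to use the standard absorption argument.
\begin{enumerate}
\item First, show that for a sufficiently large $\lambda<\kappa$, $\mathrm{Coll}(\delta,\lambda)$ absorbs $\mathbb P$: that is, $\mathrm{Coll}(\delta,\lambda)\cong\mathbb P\ast\dot{\mathbb Q}$ for a suitable $\dot{\mathbb Q}$. For this we will use the characterization of $\mathrm{Coll}(\delta,\lambda)$, for $\lambda^{<\delta}=\lambda$, as the unique separative $\delta$-closed forcing of size $\lambda$ that collapses $\lambda$ to $\delta$. This characterization comes from building a dense $\delta$-closed tree isomorphic to $\lambda^{<\delta}$ (cf.\ \cite[Theorem 14.3]{MR2768691}). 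Apply it to $\mathbb P\times\mathrm{Coll}(\delta,\lambda)$, which satisfies these hypotheses.
\item Next, factor $\mathrm{Coll}(\delta,{<}\kappa)$ as $\mathrm{Coll}(\delta,{<}\lambda^+)\times\mathrm{Coll}(\delta,[\lambda^+,\kappa))$.
\item Finally, use that in $\mathsf V^{\mathbb P}$ the tail is still the Lévy collapse. By $\delta$-closure of $\mathbb P$, the definition is absolute, and this identifies the remaining quotient with $\dot{\mathrm{Coll}}(\delta,{<}\kappa)$.
\end{enumerate}
The delicate point is matching Boolean completions rather than the posets themselves. Here the tree-embedding argument does the work, and we would cite the standard references \cite{MR2768691,MR1940513} for it.
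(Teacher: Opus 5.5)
The paper does not prove this lemma; it only cites Cummings's Handbook chapter (\S 14) and Jech (Theorem 15.22). Your arguments for (1), (2) and (4) are the standard ones and are correct. One small point in (4): what is unchanged is the \emph{set} ${}^{<\delta}\delta$, not its cardinality. Its $\mathsf V$-cardinality is below $\kappa$, so it has size $\delta$ in the extension.

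The genuine gap is in (3), where Steps 2 and 3 do not fit together. In Step 1 you record only an abstract quotient, $\mathrm{Coll}(\delta,\lambda)\simeq\mathbb P\ast\dot{\mathbb Q}$. In Step 2 you cut at $\lambda^+$ and split off $\mathrm{Coll}(\delta,{<}\lambda^+)$. That is not the factor into which $\mathbb P$ was absorbed, and it is not even equivalent to $\mathrm{Coll}(\delta,\lambda)$, since every dense subset of it has size $\lambda^+$. So what remains after $\mathbb P$ is $\dot{\mathbb Q}$, times the coordinates in $\lambda^+\setminus\{\lambda\}$, times the tail.

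Step 3 merely asserts that this product is $\mathrm{Coll}(\delta,{<}\kappa)$ as computed in $\mathsf V^{\mathbb P}$. Absoluteness of the definition under $\delta$-closed forcing handles only the ground-model coordinates; it says nothing about $\dot{\mathbb Q}$. Nor can you re-absorb $\dot{\mathbb Q}$ with the characterization theorem: a quotient of a $\delta$-closed forcing by a complete subforcing need not be $\delta$-closed. For example, adding a Suslin tree $T$ by countable approximations and then forcing with $T$ gives an iteration with a dense $\sigma$-closed subset whose second step is ccc.

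The repair is to keep the product form that the characterization already gives you. Take $\lambda<\kappa$ with $|\mathbb P|,\delta<\lambda=\lambda^{<\delta}$. Let $\mathbb C$ be the coordinate-$\lambda$ factor of $\mathrm{Coll}(\delta,{<}\kappa)$, that is, partial functions $\{\lambda\}\times\delta\to\lambda$ of size ${<}\delta$. Then $\mathrm{Coll}(\delta,{<}\kappa)\cong\mathbb C\times\mathbb R$, where $\mathbb R$ is the restriction to the remaining coordinates. The poset $\mathbb P\times\mathbb C$ is separative, $\delta$-closed, of size $\lambda$, and collapses $\lambda$ to $\delta$, so the characterization gives $\mathbb C\simeq\mathbb P\times\mathbb C$. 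Hence
\[
\mathrm{Coll}(\delta,{<}\kappa)\cong\mathbb C\times\mathbb R\simeq\mathbb P\times\mathbb C\times\mathbb R\cong\mathbb P\times\mathrm{Coll}(\delta,{<}\kappa).
\]
Finally, since $\mathbb P$ is $\delta$-closed it adds no new partial functions of size ${<}\delta$. So $\mathrm{Coll}(\delta,{<}\kappa)$ is literally the same poset in $\mathsf V$ and in $\mathsf V^{\mathbb P}$, and $\mathbb P\times\mathrm{Coll}(\delta,{<}\kappa)$ is equivalent to $\mathbb P\ast\dot{\mathrm{Coll}}(\delta,{<}\kappa)$. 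With this route no quotient ever has to be identified.
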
 
It follows immediately from the definition that the forcing $\mathrm{Coll}(\delta, {<}\kappa)$ collapses all cardinals in the open interval $(\delta, \kappa)$ to $\delta$. Moreover, if  $\delta$ is regular and $\kappa$ is inaccessible, it follows from (1) above that all the cardinals $\ge\kappa$ are preserved by $\mathrm{Coll}(\delta, {<}\kappa)$, and from (2) that all the cardinals $\le \delta$ are also preserved. In particular, if $\kappa$ is an inaccessible cardinal and $G$ is a $\mathsf{V}$-generic filter for $\mathrm{Coll}(\omega_1, {<}\kappa)$, then $\omega_1^\mathsf{V} = \omega_1^{\mathsf{V}[G]}$ and $\kappa = \omega_2^{\mathsf{V}[G]}$.

\subsection{Join-semilattices} 
The monograph \cite{MR2768581} is our reference for all classical definitions and notation in lattice theory. 

Given a poset $(P, \le)$ and some $x \in P$, we denote by $P \downarrow x$ and $P \uparrow x$ the sets $\{y \in P \mid y \le x\}$ and $\{y \in P \mid y \ge x\}$, respectively.  Sometimes, instead of $P \downarrow x$ and $P \uparrow x$, we simply write ${\downarrow} x$ and ${\uparrow} x$ when no ambiguity arises. The set ${\uparrow} x$ is called the \emph{upper cone of $x$}.  Furthermore, given two elements $x,y \in P$, $y$ is a \emph{lower cover of $x$} if $y < x$ and there is no $z \in P$ with $y < z < x$. A subset $C \subseteq P$ is said to be \emph{cofinal} in $P$ if for every $x \in P$ there is $y \in C$ with $x \le y$. Given two posets $P$ and $Q$, a map $f:P \rightarrow Q$ is \emph{order-preserving} if $f(x) \le f(y)$ for every $x,y \in P$ with $x \le y$.

A \emph{join-semilattice} is a nonempty set equipped with a binary operation, denoted by $\vee$, that is associative, commutative, and idempotent; it induces a partial order via $x \le y \iff x \vee y = y$. Equivalently, a join-semilattice is a partially ordered set in which every pair of elements $x,y$ admits a least upper bound, denoted by $x \vee y$. The dual notion is the \emph{meet-semilattice}.  We treat semilattices as algebraic structures or as posets depending on which representation is better suited to the given context.

A nonempty subset $S$ of a join-semilattice $P$ is called a \emph{join-subsemilattice} of $P$ if it is closed under binary joins. Meet-subsemilattices are dually defined. A downward closed join-subsemilattice is called an \emph{ideal}. Note that $P \downarrow x$ is an ideal of $P$ for every $x \in P$; such ideals are known as \emph{principal ideals}. An ideal that does not coincide with the whole poset is a \emph{proper ideal}. 

Given a nonempty subset $S$ of a join-semilattice $P$ and an element $x \in P$, we denote by $x \vee S$ the set $\{x \vee y: y \in S\}$. Note that if $S$ is a join-subsemilattice, then $x \vee S$ is also a join-subsemilattice.

Given a join-semilattice $P$ and a finite nonempty set $F = \{x_0, x_1, \dots, x_{n}\}$, we denote $x_0 \vee x_1 \vee \ldots \vee x_n$ simply by $\bigvee F$. If we write $\bigvee \emptyset$, we are tacitly assuming that $P$ has a least element $0_P$, and therefore that $\bigvee \emptyset = 0_P$. 

Let us briefly review the notion of quotient join-semilattice. Given a join-semilattice $P$, an equivalence relation $\sim$ on $P$ is a \emph{congruence relation} if for all $x_0,x_1,y_0,y_1$ in $P$,
\[
x_0 \sim y_0 \text{ and } x_1 \sim y_1 \Rightarrow x_0 \vee x_1 \sim y_0 \vee y_1.
\]

Given a congruence relation $\sim$ on $P$, we can define the join operator $\vee$ on the quotient $P/{\sim}$ as follows: for every $x,y \in P$,
\[
[x]_\sim \vee [y]_\sim \coloneqq [x\vee y]_\sim.
\]
It is easy to check that this operation is well defined and makes $P/{\sim}$ into a join-semilattice. The resulting join-semilattice $P / {\sim}$ is called the \emph{quotient join-semilattice of $P$ modulo~$\sim$}. We denote the quotient map by $q_\sim : P \rightarrow P/{\sim}$.

Any ideal $I$ of $P$ induces the following natural congruence relation $\sim_I$ on $P$: for $x,y \in P$, $x \sim_I y$ if there exists $z \in I$ such that $x \vee z = y \vee z$. In this case, we simply write $P/I$ and $q_I$ instead of $P/{\sim_I}$ and $q_{\sim_I}$. Not every congruence relation on a join-semilattice is induced by an ideal.

Let us also recall the definition of breadth, a classical numeric invariant of lattice theory. 
\begin{definition}
Let $P$ be a join-semilattice and $n\in\omega$. We say that $P$ has \emph{breadth at most $n$} if, for every nonempty finite subset $X$ of  $P$, there exists $Y\subseteq X$ with at most $n$ elements such that $\bigvee X = \bigvee Y$. The \emph{breadth} of $P$, denoted by $\mathrm{breadth}(P)$, is the least $n \in\omega$ such that $P$ has breadth at most $n$, if such an $n$ exists.
\end{definition}
In fact, there is a more general notion of breadth which is self-dual and purely poset-theoretical \cite[\S 4]{MR0732199}. We say that a join-semilattice has \emph{finite breadth} if it has breadth at most $n$ for some $n\in\omega$. Furthermore, note that a join-semilattice $P$ has breadth $0$ if and only if  $P = \{0_P\}$, and has breadth at most $1$ if and only if it is a linear order. The next lemma is immediate from the definition.

\begin{lemma}
Given a join-semilattice $P$ and $n \in\omega$, the following are equivalent:
\begin{enumerate}[label={\upshape (\arabic*)}]
\item $P$ has breadth at most $n$.
\item For every $X \in [P]^{n+1}$, there exists $Y \in [X]^n$ such that $\bigvee X = \bigvee Y$.
\end{enumerate}
\end{lemma}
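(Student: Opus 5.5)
The implication (1)$\Rightarrow$(2) is a direct specialization of the definition, and for (2)$\Rightarrow$(1) we show by induction on $|X|$ that every finite subset of size greater than $n$ can be shrunk step by step.

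For (1)$\Rightarrow$(2), let $X\in[P]^{n+1}$. Since $X$ is a nonempty finite subset, the definition of breadth at most $n$ gives some $Y\subseteq X$ with $|Y|\le n$ and $\bigvee X=\bigvee Y$. We need $Y$ to have exactly $n$ elements. If $|Y|<n$, we add elements of $X\setminus Y$ to $Y$ until it has size $n$; this is possible because $|X|=n+1>n$. Call the result $Y'$. Since $Y\subseteq Y'\subseteq X$, we have $\bigvee Y\le\bigvee Y'\le\bigvee X=\bigvee Y$, so $\bigvee Y'=\bigvee X$ and $Y'\in[X]^n$. The degenerate case $n=0$ also works: then $X=\{x\}$ and $Y=\emptyset$ with $\bigvee\emptyset=0_P=x$. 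This is consistent with the convention that writing $\bigvee\emptyset$ presupposes a least element, and the same convention applies in both directions.

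For (2)$\Rightarrow$(1), we prove by induction on $m=|X|$ that every nonempty finite $X\subseteq P$ contains some $Y\subseteq X$ with $|Y|\le n$ and $\bigvee Y=\bigvee X$. If $m\le n$, take $Y=X$. If $m>n$, choose any $Z\in[X]^{n+1}$. By (2) there is $W\in[Z]^n$ with $\bigvee W=\bigvee Z$. Pick $z\in Z\setminus W$ and set $X'=X\setminus\{z\}$. Since $W\subseteq X'$, we get $z\le\bigvee Z=\bigvee W\le\bigvee X'$. Hence $\bigvee X'=\bigvee X$, using associativity and commutativity of $\vee$. Now $|X'|=m-1$, and $X'$ is nonempty when $n\ge1$, so the induction hypothesis gives $Y\subseteq X'\subseteq X$ with $|Y|\le n$ and $\bigvee Y=\bigvee X'=\bigvee X$.

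There is no real obstacle. The only points needing care are the case $n=0$, where $X'$ may be empty and one uses $\bigvee\emptyset=0_P$, and the padding step in (1)$\Rightarrow$(2), which is needed because the definition gives at most $n$ elements while (2) asks for exactly $n$.
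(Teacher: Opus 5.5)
Your proof is correct. The paper gives no argument, calling the lemma immediate from the definition, and your padding step for $(1)\Rightarrow(2)$ together with the one-element-at-a-time removal for $(2)\Rightarrow(1)$, including the handling of $\bigvee\emptyset=0_P$ when $n=0$, is exactly the routine verification being left to the reader.
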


Finally, let us introduce a nonstandard notation from \cite{notaro2026maximalladders}. Given a lower finite lattice $P$, an ideal $I\subseteq P$, and an element $x \in P$, let
\[
\pi_I(x) \coloneqq \bigvee \{y \in I : y \le x\}.
\]
We say that $\pi_I(x)$ is the \emph{projection of $x$ onto $I$}. Since every lower finite lattice has a least element and ${\downarrow} x$ is finite, the set $\{y \in I : y \le x\}$ is finite and nonempty; hence its join is well-defined. Equivalently, $\pi_I(x)$ is the greatest element of $I$ which is less than or equal to $x$, i.e.,
\[
\pi_I(x) = \max \{y \in I : y \le x\} = \max (I \cap ({\downarrow} x)).
\]

Fix a lower finite lattice $P$ and an ideal $I \subseteq P$. The following properties of the map $\pi_I$ follow directly from its definition:
\begin{itemize}[]
\item $\pi_I \upharpoonright I$ is the identity;
\item $\pi_I$ is idempotent, that is, $\pi_I \circ \pi_I = \pi_I$; 
\item $\pi_I(x) \le x$ for every $x \in P$;
\item $\pi_I$ is order-preserving.
\end{itemize}
A self-map of a poset satisfying the last three properties is an \emph{interior operator}, equivalently, a closure operator on the order dual. It is also sometimes called a \emph{kernel operator} or a \emph{projection} \cites{MR1902334, MR1975381, MR437330}.

The following two lemmas from \cite{notaro2026maximalladders} establish two less immediate properties of the maps $\pi_I$. 

\begin{lemma}\label{lemma:meetproj}
Given a lower finite lattice $P$, elements $x,y \in P$, and an ideal $I \subseteq P$, $\pi_I(x \wedge y) = \pi_I(x) \wedge \pi_I(y)$.
\end{lemma}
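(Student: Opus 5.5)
We prove the two inequalities $\pi_I(x\wedge y)\le \pi_I(x)\wedge\pi_I(y)$ and $\pi_I(x)\wedge\pi_I(y)\le \pi_I(x\wedge y)$ separately. Only the basic properties of $\pi_I$ listed above and the fact that an ideal is downward closed are needed.

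For the first inequality, $x\wedge y\le x$ and $\pi_I$ is order-preserving, so $\pi_I(x\wedge y)\le \pi_I(x)$. In the same way $\pi_I(x\wedge y)\le\pi_I(y)$, and therefore $\pi_I(x\wedge y)\le \pi_I(x)\wedge\pi_I(y)$.

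For the reverse inequality, set $z\coloneqq \pi_I(x)\wedge\pi_I(y)$; this meet exists because $P$ is a lattice. We first check that $z\in I$. Indeed, $z\le \pi_I(x)$ and $\pi_I(x)\in I$, since $\pi_I(x)$ is the maximum of $I\cap({\downarrow}x)$. As $I$ is downward closed, $z\in I$. Next, $z\le \pi_I(x)\le x$ and $z\le\pi_I(y)\le y$, so $z\le x\wedge y$. Hence $z\in I\cap {\downarrow}(x\wedge y)$. By the characterization $\pi_I(x\wedge y)=\max(I\cap{\downarrow}(x\wedge y))$, we conclude $z\le \pi_I(x\wedge y)$.

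Combining the two inequalities gives the equality. There is no real obstacle here. The only point needing care is that $\pi_I(x)$ belongs to $I$, which holds because the join defining it is a finite nonempty join of elements of $I$ and $I$ is closed under binary joins. Downward closure of $I$ then puts the meet $z$ inside $I$, which is the fact the second inequality rests on.
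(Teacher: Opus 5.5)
Your proof is correct and follows the paper's argument: as in the paper, you get $\pi_I(x)\wedge\pi_I(y)\le\pi_I(x\wedge y)$ because this meet lies in $I$ by downward closure and lies below $x\wedge y$. For the other inequality you use that $\pi_I$ is order-preserving, while the paper notes directly that $\pi_I(x\wedge y)\in I$ lies below both $x$ and $y$; these amount to the same fact.
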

\begin{proof}
Since $\pi_I(x) \le x$ and $\pi_I(y) \le y$, we have $\pi_I(x) \wedge \pi_I(y) \le x \wedge y$. As $I$ is downward closed, $\pi_I(x) \wedge \pi_I(y) \in I$. We conclude  $\pi_I(x) \wedge \pi_I(y) \le \pi_I(x \wedge y)$.

Furthermore, since $\pi_I(x \wedge y) \le x \wedge y \le x,y$ and $\pi_I(x \wedge y) \in I$,  we conclude $\pi_I(x \wedge y) \le \pi_I(x) \wedge \pi_I(y)$. Overall, we have $\pi_I(x \wedge y) = \pi_I(x) \wedge \pi_I(y)$.
\end{proof}

In other words, Lemma~\ref{lemma:meetproj} tells us that $\pi_I : P\rightarrow I$ is a meet-homomorphism. Moreover, if $x,y \in P$ are such that $x \wedge y \in I$, we conclude from Lemma~\ref{lemma:meetproj} that $x \wedge y = \pi_I(x) \wedge \pi_I(y)$.

\begin{lemma}\label{lemma:concproj}
If $P$ is a lower finite lattice and $I,J \subseteq P$ are ideals with $I \subseteq J$, then $\pi_I \circ \pi_J = \pi_I$.
\end{lemma}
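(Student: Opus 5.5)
We show the two maps agree pointwise: for every $x \in P$, $\pi_I(\pi_J(x)) = \pi_I(x)$. The argument uses only the characterization $\pi_K(x) = \max(K \cap {\downarrow} x)$ for an ideal $K$, together with the listed facts that $\pi_K$ is order-preserving and satisfies $\pi_K(x) \le x$. We prove the two inequalities separately.

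For $\pi_I(\pi_J(x)) \le \pi_I(x)$: we have $\pi_J(x) \le x$, and $\pi_I$ is order-preserving, so applying $\pi_I$ gives the inequality.

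For $\pi_I(x) \le \pi_I(\pi_J(x))$: set $y = \pi_I(x)$. Then $y \in I \subseteq J$ and $y \le x$. By the maximality of $\pi_J(x)$ in $J \cap {\downarrow} x$, this gives $y \le \pi_J(x)$. Now $y$ lies in $I \cap {\downarrow} \pi_J(x)$, and $\pi_I(\pi_J(x))$ is the maximum of that set, so $y \le \pi_I(\pi_J(x))$.

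Combining the two inequalities gives $\pi_I(\pi_J(x)) = \pi_I(x)$ for every $x$, which is the claim. The only step that needs any care is the second inequality, where the hypothesis $I \subseteq J$ is used to place $\pi_I(x)$ inside $J$.
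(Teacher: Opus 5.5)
Your proof is correct and is essentially the paper's argument: both prove the two inequalities separately, and both place $\pi_I(x)$ below $\pi_J(x)$ using $I \subseteq J$. The only difference is cosmetic. The paper gets the first inequality from the maximality of $\pi_I(x)$ and the second from idempotence plus monotonicity, whereas you use monotonicity for the first and maximality for the second.
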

\begin{proof}
Fix an $x \in P$. Since $\pi_I \circ \pi_J(x) \in I$ and $\pi_I \circ \pi_J(x) \le x$, we have $\pi_I \circ \pi_J(x) \le \pi_I(x)$. Furthermore,  $\pi_I(x) \le \pi_J(x)$ since $\pi_I(x) \le x$ and  $\pi_I(x) \in I \subseteq J$. Therefore, $\pi_I(x) = \pi_I \circ \pi_I(x) \le \pi_I \circ \pi_J (x)$. Overall, $\pi_I \circ \pi_J (x) = \pi_I (x)$.
\end{proof}

\subsection{Ditor's Theorem}\label{sec:prel:ditor} 
Ditor proved that the breadth, together with the cardinalities of the principal ideals, provides a neat upper bound on the cardinality of the join-semilattice.

\begin{theorem}[{Ditor, \cite[Theorem 5.2]{MR0732199}}]\label{thm:ditor}
Let $n > 0$ and let $\kappa$ be an infinite cardinal. If $P$ is a join-semilattice of breadth at most $n$ whose principal ideals have cardinality $< \kappa$, then
\begin{enumerate}[label={\upshape (\alph*)}]
\item
\( |P| \le \kappa^{+(n-1)} \), and
\item
\( |I| < \kappa^{+(n-1)} \) for every proper ideal $I$ of $P$.
\end{enumerate}
\end{theorem}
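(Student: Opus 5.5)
We will prove (a) and (b) together by induction on $n$, for a fixed infinite cardinal $\kappa$.

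\emph{Base case $n=1$.} Here $P$ is a chain all of whose principal ideals have size $<\kappa$.

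For (b), let $I$ be a proper ideal and pick $x\in P\setminus I$. Since $P$ is a chain and $I$ is downward closed, every $y\in I$ satisfies $y<x$, so $I\subseteq {\downarrow}x$. Hence $|I|<\kappa$.

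For (a), take a cofinal well-ordered sequence $\langle x_\alpha:\alpha<\mathrm{cf}(P)\rangle$ in $P$.
\begin{itemize}
\item If $\mathrm{cf}(P)\le\kappa$, then $P=\bigcup_\alpha {\downarrow}x_\alpha$ is a union of at most $\kappa$ sets, each of size $<\kappa$. So $|P|\le\kappa$.
\item If $\mathrm{cf}(P)>\kappa$, then the initial segment ${\downarrow}x_\kappa$ contains the $\kappa$ distinct elements $x_\alpha$ for $\alpha<\kappa$. This contradicts $|{\downarrow}x_\kappa|<\kappa$.
\end{itemize}
Either way $|P|\le\kappa=\kappa^{+0}$.

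\emph{Inductive step: (b) implies (a).} Suppose (b) holds for $n$, and let $P$ have breadth at most $n$. Write $\lambda=\kappa^{+(n-1)}$ and suppose $|P|>\lambda$. Choose any $A\subseteq P$ with $|A|=\lambda^+$, and let $I$ be the ideal generated by $A$. Since $|I|\ge\lambda^+>\lambda$, (b) forces $I=P$. So every such $A$ generates $P$. I would argue this is contradictory by building an increasing chain of ideals $\langle I_\xi:\xi<\lambda^+\rangle$, each of size $\le\lambda$, such that $P$ is not covered by their union. This is routine, so (a) follows once (b) is established. Equivalently, (a) for $n$ can be obtained by applying (b) to an increasing union of $\lambda^+$ proper ideals.

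\emph{Main step: proving (b) for $n+1$.} Let $P$ have breadth at most $n+1$, let $\mu=\kappa^{+n}$, and let $I$ be a proper ideal. Pick $x\notin I$ and assume toward a contradiction that $|I|\ge\mu$.

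The key reduction is that the join-semilattice $x\vee I$ has breadth at most $n$. Take $n+1$ elements $x\vee y_0,\dots,x\vee y_n$ of $x\vee I$. By breadth at most $n+1$ applied to the $n+2$ elements $x,y_0,\dots,y_n$, their join equals the join of some $n+1$ of them. If $x$ is the omitted element, then $\bigvee y_i\in I$ would lie above $x$, contradicting $x\notin I$. So the omitted element is some $y_j$, and then the join $\bigvee_i(x\vee y_i)$ equals the join of the other $n$ terms $x\vee y_i$ with $i\neq j$.

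The principal ideals of $x\vee I$ are the sets $(x\vee I)\cap{\downarrow}z$, which have size $<\kappa$. By the induction hypothesis (a) for $n$, we get $|x\vee I|\le\kappa^{+(n-1)}$.

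The hardest part is to conclude $|I|<\mu$ from this. The map $y\mapsto x\vee y$ is not injective, and its fibres have to be controlled. My plan is to take a continuous increasing chain of elementary submodels $M_\alpha$ of size $<\mu$, for $\alpha<\mu$, and to find an $x$ whose cone $x\vee(I\cap M_\alpha)$ stabilises. Ditor's actual argument instead counts the pairs $(x,I)$ via a filtration of $I$ into small ideals and uses regularity of $\mu$ together with (b) at level $n$. I expect a pressing-down-style counting argument to be needed here, and this is the main obstacle.
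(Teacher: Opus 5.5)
Your reduction to $x\vee I$ is the right one, and your check that $x\vee I$ has breadth at most $n$ is correct; this is Ditor's Lemma~5.1, Lemma~\ref{lemma:basicproj} in the paper. But the proposal stops exactly at the last line of the inductive step. You call the passage from $|x\vee I|\le\kappa^{+(n-1)}$ to $|I|<\kappa^{+n}$ the ``main obstacle'' and defer it to an elementary-submodel/pressing-down argument that you never carry out. So (b) is never established at level $n+1$, and the induction does not close.

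The missing step is a one-line count, and you do not need to control the fibres of $y\mapsto x\vee y$. Every $y\in I$ satisfies $y\le x\vee y$, so
\[
I\subseteq\bigcup_{z\in x\vee I}{\downarrow}z,
\]
and each ${\downarrow}z$ has cardinality $<\kappa$. In other words, the fibre over $z$ lies inside ${\downarrow}z$. Hence $|I|\le|x\vee I|\cdot\kappa\le\kappa^{+(n-1)}\cdot\kappa=\kappa^{+(n-1)}<\kappa^{+n}$. The paper does not reprove Ditor's Theorem; it only cites it. However, this is precisely the count it uses in the proof of Corollary~\ref{cor:induction}, with $\aleph_0$ in place of $\kappa$.

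Your step (b)$\Rightarrow$(a) is also left as ``routine''. The route you sketch, a union of $\lambda^+$ small ideals that fails to cover $P$, is unproved and unnecessary, and such a union could well be all of $P$. A direct argument works instead. Let $\lambda=\kappa^{+(n-1)}\ge\kappa$, suppose $|P|>\lambda$, and pick $A\subseteq P$ with $|A|=\lambda$. The ideal generated by $A$ is $\bigcup\{{\downarrow}\bigvee F: F\in[A]^{<\omega}\setminus\{\emptyset\}\}$, whose cardinality is exactly $\lambda$. So it is a proper ideal of size $\lambda$, contradicting (b).

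With these two repairs your outline becomes a complete proof:
\begin{enumerate}
\item (b) holds for chains;
\item (b) implies (a) at each level;
\item (a) at level $n$, together with Lemma~\ref{lemma:basicproj}, gives (b) at level $n+1$.
\end{enumerate}
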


As noted by Wehrung, the first inequality of Ditor's Theorem is in fact a fairly direct corollary of Kuratowski's Free Set Theorem \cite{MR48518} (see also \cite[Theorem 46.1]{MR795592}). 

Since every $n$-ladder is, in particular, a lower finite join-semilattice of breadth at most $n$ \cite[Proposition 4.1]{MR0732199}, it follows from Ditor's Theorem that every $n$-ladder has cardinality at most $\aleph_{n-1}$. 

Another consequence of Ditor's Theorem that we shall use repeatedly is that, for every infinite cardinal $\kappa$ and every $n > 0$, every join-semilattice of cardinality $\kappa^{+(n-1)}$ whose principal ideals have cardinality $< \kappa$ has breadth at least $n$. In particular, for lower finite join-semilattices of cardinality $\aleph_{n-1}$, having breadth at most $n$ is equivalent to having breadth exactly $n$. Thus, Theorem~\ref{thm:main} could equivalently have been stated with ``breadth $n$'' in place of ``breadth at most $n$''.

Finally, let us remark that Ditor also asked whether these more general cardinal upper bounds are sharp \cite[Question~A]{MR0732199}:

\begin{questionA}
For every $n > 0$ and every infinite cardinal $\kappa$, is there a join-semilattice of breadth $n$ and cardinality $\kappa^{+(n-1)}$ whose principal ideals have cardinality less than $\kappa$?
\end{questionA}

Ditor's Question~B on ladders, stated in the introduction, is a more demanding version of Question~A when $\kappa=\aleph_0$. Indeed, every $n$-ladder is, in particular, a lower finite join-semilattice of breadth at most $n$. In this sense, $n$-ladders may be regarded as particularly tame examples of lower finite join-semilattices of breadth at most $n$.

Together with the consistent positive results recalled in the introduction, Theorem~\ref{thm:main} implies that the instance of Ditor's Question~A corresponding to $\kappa=\aleph_0$ and $n\geq3$ is independent of $\mathsf{ZFC}$, relative to the consistency of a Mahlo cardinal. The fact that Theorem~\ref{thm:main} concerns lattices rather than join-semilattices is inessential: adjoining a least element to an infinite lower finite join-semilattice yields a lower finite lattice without changing its cardinality or breadth. See Section~\ref{sec:questions} for further discussion.

\subsection{Filters}\label{sec:prel:filters}

Recall that a \emph{filter} on an infinite set $X$ is a collection $\mathcal{F} \subseteq \mathcal{P}(X)$ that satisfies the following conditions:
\begin{enumerate}
\item $\emptyset \not\in \mathcal{F}$ and $X \in \mathcal{F}$,
\item if $U \in \mathcal{F}$ and $U \subseteq^* V \subseteq X$, then $V \in \mathcal{F}$,
\item if $U,V \in \mathcal{F}$, then $U \cap V \in \mathcal{F}$.
\end{enumerate}
Given a filter $\mathcal{F}$, we denote by $\mathcal{F}^+$ the family of all subsets $H \subseteq X$ such that $X \setminus H \not\in \mathcal{F}$. Note that $H \in \mathcal{F}^+$ if and only if $H \cap U$ is infinite for every $U \in \mathcal{F}$. 

The filter of cofinite subsets of $X$ is the \emph{Fréchet filter}; when $X = \omega$, we denote it by $\mathfrak{Fr}$. By definition, every filter contains the Fréchet filter\footnote{The filters considered here are precisely the \emph{free} filters in the usual terminology. Following a convention common in the set-theoretic literature on filters on $\omega$, we omit the adjective; see, e.g., \cite{MR1367134}.}.

Let us recall the Rudin-Blass reducibility preorder on filters---see, e.g., \cites{MR1627310, MR2855877}.
\begin{definition}
Let $\mathcal{F}$ and $\mathcal{G}$ be filters on $X$ and $Y$, respectively. We say that $\mathcal{F}$ is \emph{Rudin-Blass reducible to} $\mathcal{G}$ and write $\mathcal{F} \le_{RB} \mathcal{G}$ if there exists a finite-to-one map $f: Y \rightarrow X$ such that, for each $U \subseteq X$, $U\in \mathcal{F}$ if and only if $f^{-1}(U) \in \mathcal{G}$.
\end{definition}

For notational clarity, the remainder of this subsection is stated for filters on $\omega$, but the following two lemmas hold for filters on arbitrary countably infinite sets. If we endow $\mathcal{P}(\omega)$ with its standard topology induced by the Cantor space ${}^\omega 2$, we can ask topological questions about filters, seen as subsets of the topological space $\mathcal{P}(\omega)$. The following well-known result of Talagrand~\cite{MR579439} gives a very useful combinatorial characterization of meager filters. 

\begin{lemma}[Talagrand]\label{lemma:characterizationmeager}
The following are equivalent for a filter $\mathcal{F}$:
\begin{enumerate}[label={\upshape (\arabic*)}]
\item $\mathcal{F}$ is meager.
\item $\mathfrak{Fr} \le_{RB} \mathcal{F}$.
\item There exists a sequence $\langle A_n : n\in\omega\rangle$ of finite, pairwise disjoint subsets of $\omega$ such that, for every $U \in \mathcal{F}$, $U \cap A_n \neq \emptyset$ for all but finitely many $n$.
\end{enumerate}
\end{lemma}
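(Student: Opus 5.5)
We prove the cycle of implications (1) $\Rightarrow$ (3) $\Rightarrow$ (2) $\Rightarrow$ (1). Throughout, $\mathcal{F}$ is a filter on $\omega$, hence contains $\mathfrak{Fr}$ and contains no finite set.

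\textbf{(3) $\Rightarrow$ (2).} Let $\langle A_n : n\in\omega\rangle$ be as in (3). Define $f:\omega\to\omega$ by $f(k)=n$ if $k\in A_n$, and $f(k)=0$ if $k\notin\bigcup_n A_n$. This map is finite-to-one, since each $f^{-1}(\{n\})\subseteq A_n\cup\{k : k\notin\bigcup_m A_m\}$ only for $n=0$. If the complement $C=\omega\setminus\bigcup_n A_n$ is infinite, first redistribute $C$ among the blocks: enlarge $A_n$ to $A_n' = A_n\cup (C\cap[n,n+1))$. The $A_n'$ remain finite and pairwise disjoint, cover $\omega$, and still satisfy (3). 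So we may assume the $A_n$ partition $\omega$.

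Now check the equivalence. If $U\in\mathfrak{Fr}$, then $f^{-1}(U)$ is cofinite, hence belongs to $\mathcal{F}$. Conversely, suppose $f^{-1}(U)\in\mathcal{F}$. Then $f^{-1}(U)\cap A_n\neq\emptyset$ for almost all $n$. Since $f^{-1}(U)\cap A_n\ne\emptyset$ means exactly $n\in U$, the set $U$ is cofinite. Hence $\mathfrak{Fr}\le_{RB}\mathcal{F}$ via $f$.

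\textbf{(2) $\Rightarrow$ (1).} Let $f$ witness $\mathfrak{Fr}\le_{RB}\mathcal{F}$, and put $A_n=f^{-1}(\{n\})$, which are finite and pairwise disjoint. For $U\in\mathcal{F}$, the image $f[U]$ satisfies $f^{-1}(f[U])\supseteq U$, so $f^{-1}(f[U])\in\mathcal{F}$. Hence $f[U]$ is cofinite, i.e.\ $U$ meets almost every $A_n$. Therefore
\[
\mathcal{F}\subseteq\bigcup_{m}\bigcap_{n\ge m}\{U : U\cap A_n\neq\emptyset\}.
\]
For fixed $m$, the set $K_m=\bigcap_{n\ge m}\{U : U\cap A_n\neq\emptyset\}$ is closed, because each condition depends only on the finitely many coordinates in $A_n$. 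It also has empty interior: a basic open set fixes finitely many coordinates, and we can choose $n\ge m$ with $A_n$ disjoint from those coordinates (and $A_n\ne\emptyset$) and set $U\cap A_n=\emptyset$. So $\mathcal{F}$ is meager.

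\textbf{(1) $\Rightarrow$ (3).} This is the main step. Suppose $\mathcal{F}\subseteq\bigcup_k N_k$ with each $N_k$ closed nowhere dense; we may assume the $N_k$ are increasing. Since $\mathcal{F}$ is closed upward, it is also disjoint from the set $\mathcal{F}^*=\{\omega\setminus U : U\in\mathcal{F}\}$ in a useful way, and we use both families. Recursively choose $0=m_0<m_1<\cdots$ and finite sets $s_k\subseteq[m_k,m_{k+1})$ with the following property: any $U$ satisfying $U\cap[m_k,m_{k+1})=s_k$ lies outside $N_k$, whatever its values elsewhere below $m_k$. This is achievable because $N_k$ is nowhere dense. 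We handle the finitely many ($2^{m_k}$) possible patterns below $m_k$ one at a time, each time extending the block further so as to avoid $N_k$.

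Symmetrically, applying the same argument to the meager set $\{\omega\setminus U : U\in\mathcal{F}\}$ (the image of $\mathcal{F}$ under a homeomorphism), we can additionally arrange that any $U$ with $U\cap[m_k,m_{k+1})=\emptyset$ also avoids $N_k$. We do this by interleaving the requirements within the same block.

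Now let $A_k=[m_k,m_{k+1})$. Suppose $U\in\mathcal{F}$ with $U\cap A_k=\emptyset$ for infinitely many $k$. Use upward closure: the set $U$ itself, or a suitable superset (using that $\mathcal{F}$ is closed under $\subseteq^*$-supersets and that $U\in N_j$ for all large $j$), matches a forbidden pattern on some block $k$ with $k$ large. This contradicts $U\in N_k$.

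The main obstacle is the bookkeeping in this last direction. We must ensure that a single forbidden pattern per block suffices, independently of $U$'s behavior off the block, and that the pattern can be taken to be \emph{empty on $A_k$}. The cleanest route is to choose $s_k$ so that every set agreeing with $s_k$ on $A_k$ avoids $N_k$. Then, for $U\in\mathcal{F}$ missing $A_k$, the set $U\cup s_k$ is a superset of $U$ and hence lies in $\mathcal{F}$. Because the $N_k$ are increasing, $U\cup s_k\in N_j$ for all $j$ large, and so $U\cup s_k\in N_k$ once $k$ is large enough. But $U\cup s_k$ agrees with $s_k$ on $A_k$, so it avoids $N_k$, a contradiction. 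Thus $U$ meets almost all $A_k$, which gives (3).
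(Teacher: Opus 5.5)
The paper quotes this lemma from Talagrand and gives no proof of its own, so your argument has to stand on its own. Your (3)$\Rightarrow$(2) is correct once the uncovered points are redistributed so that the blocks partition $\omega$. Your (2)$\Rightarrow$(1) is also correct. So is the block construction in (1)$\Rightarrow$(3): intervals $A_k=[m_k,m_{k+1})$ and finite $s_k\subseteq A_k$ such that every set agreeing with $s_k$ on $A_k$ avoids $N_k$.

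The gap is the last step of (1)$\Rightarrow$(3). You pass to a \emph{different} set $U\cup s_k$ for each $k$. You then argue that, since $U\cup s_k\in N_j$ for all large $j$, it lies in $N_k$ once $k$ is large. But the threshold beyond which $U\cup s_k\in N_j$ depends on the set $U\cup s_k$, hence on $k$. Nothing prevents it from exceeding $k$ for every $k$.

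This is not merely sloppy wording. Because $s_k$ is finite, $U\cup s_k=^*U$, so your argument only uses that $\mathcal{F}$ is closed under finite modifications. Apply it verbatim to the meager family $[\omega]^{<\omega}$, covered by the increasing closed nowhere dense sets $N_k=\{U : U\subseteq k\}$. It would show that $\emptyset$ meets almost every $A_k$, which is absurd.

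The missing idea is to use upward closure once, for a single \emph{infinite} enlargement of $U$. Suppose $K=\{k : U\cap A_k=\emptyset\}$ is infinite, and put $W=U\cup\bigcup_{k\in K}s_k$. Then $W\supseteq U$, so $W\in\mathcal{F}$, and hence $W\in N_j$ for all $j\ge j_0$. Now take $k\in K$ with $k\ge j_0$. Since the $A_k$ are disjoint and $U$ misses $A_k$, we have $W\cap A_k=s_k$, so $W\notin N_k$, a contradiction. This is exactly where the full filter property enters, and it is the step that fails for $[\omega]^{<\omega}$.

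Separately, drop the ``symmetric'' paragraph. You cannot in general arrange that every $U$ with $U\cap A_k=\emptyset$ avoids $N_k$; this fails whenever $\emptyset\in N_k$. Running your construction on $\{\omega\setminus U : U\in\mathcal{F}\}$ only produces another forbidden pattern of the form $A_k\setminus s_k'$, not the empty one. Your final argument does not need that claim.
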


In what follows, we will use only the equivalent combinatorial characterizations of meagerness given in conditions~(2) and~(3).

By Talagrand's characterization, it is clear that the meager filters form an upward closed subset with respect to the Rudin-Blass reducibility preorder. However, the family of meager filters is also downward closed:

\begin{lemma}[Folklore]\label{lemma:preservmeager}
Given two filters $\mathcal{F}, \mathcal{G}$ with $\mathcal{F} \le_{RB} \mathcal{G}$, $\mathcal{F}$ is meager if and only if $\mathcal{G}$ is meager.
\end{lemma}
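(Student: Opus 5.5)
Both directions will go through Talagrand's characterization (Lemma~\ref{lemma:characterizationmeager}). Fix a finite-to-one map $f:Y\to X$ witnessing $\mathcal{F}\le_{RB}\mathcal{G}$, so that $U\in\mathcal{F}$ if and only if $f^{-1}(U)\in\mathcal{G}$ for every $U\subseteq X$.

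\emph{If $\mathcal{F}$ is meager, then so is $\mathcal{G}$.} By condition (2) there is a finite-to-one $g$ witnessing $\mathfrak{Fr}\le_{RB}\mathcal{F}$. The composition $g\circ f$ is finite-to-one, since preimages of points are finite unions of finite sets. For $U\subseteq\omega$ we have $U\in\mathfrak{Fr}$ iff $g^{-1}(U)\in\mathcal{F}$ iff $f^{-1}(g^{-1}(U))\in\mathcal{G}$. Hence $g\circ f$ witnesses $\mathfrak{Fr}\le_{RB}\mathcal{G}$, and $\mathcal{G}$ is meager by (2). This is the easy direction: Rudin--Blass reducibility is transitive.

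\emph{If $\mathcal{G}$ is meager, then so is $\mathcal{F}$.} This is the main step. I will use condition (3). Take finite pairwise disjoint sets $A_n\subseteq Y$ such that every $V\in\mathcal{G}$ meets all but finitely many $A_n$.

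First I discard empty blocks. If infinitely many $A_n$ were empty, then no $V\in\mathcal{G}$ could meet cofinitely many blocks, which is absurd since $Y\in\mathcal{G}$. So only finitely many are empty and I may drop them.

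The natural candidate witnesses are the images $B_n=f[A_n]$. These are finite, and every $U\in\mathcal{F}$ meets almost all of them. Indeed, $f^{-1}(U)\in\mathcal{G}$ meets almost all $A_n$, so $U$ meets almost all $f[A_n]$. The obstacle is that the $B_n$ need not be pairwise disjoint.

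To fix this, I use that $f$ is finite-to-one. Each point $x\in X$ has a finite fiber $f^{-1}(x)$, so $x$ lies in only finitely many $B_n$. Consequently, for any finite set $S\subseteq X$, only finitely many $B_n$ meet $S$. I then build an increasing sequence $n_0<n_1<\dots$ recursively: choose $n_{k+1}$ large enough that every $B_m$ with $m\ge n_{k+1}$ is disjoint from $\bigcup_{j<n_{k+1}}B_j$. This is possible because that union is finite.

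Now set $C_k=\bigcup_{n_k\le m<n_{k+1}}B_m$. These are finite, nonempty, and pairwise disjoint. Given $U\in\mathcal{F}$, there is $N$ such that $U\cap B_m\ne\emptyset$ for all $m\ge N$. Then $U\cap C_k\ne\emptyset$ whenever $n_k\ge N$, so $U$ meets all but finitely many $C_k$. By (3), $\mathcal{F}$ is meager.

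Transporting to arbitrary countable $X$ and $Y$ is harmless, since one may fix bijections with $\omega$.
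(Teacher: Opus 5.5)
Your first direction (transitivity of $\le_{RB}$) is exactly the paper's. Your plan for the converse also matches the paper's: push the Talagrand blocks of $\mathcal{G}$ forward along $f$, then use finite-to-one-ness to restore disjointness. The gap is in the disjointification step, which fails as written.

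The requirement is that every $B_m$ with $m\ge n_{k+1}$ be disjoint from $\bigcup_{j<n_{k+1}}B_j$. This is circular: the finite set to be avoided grows with $n_{k+1}$, so ``that union is finite'' does not produce such an $n_{k+1}$. In fact it need not exist, and no grouping into consecutive blocks can work. Take $Y=\omega\times 2$, $X=\omega$, $f(n,i)=n+i$, $A_n=\{(n,0),(n,1)\}$, and let $\mathcal{F},\mathcal{G}$ be the Fréchet filters. Then:
\begin{itemize}
\item $f$ is finite-to-one and surjective, so it witnesses $\mathcal{F}\le_{RB}\mathcal{G}$;
\item the $A_n$ witness condition (3) for $\mathcal{G}$;
\item $B_n=\{n,n+1\}$.
\end{itemize}
For every $N\ge 1$, $B_N$ meets $\bigcup_{j<N}B_j$ at the point $N$. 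For any choice of cut points, $C_k$ and $C_{k+1}$ both contain $n_{k+1}$. If you relax the condition to avoid only $\bigcup_{j<n_k}B_j$, the recursion becomes possible, but consecutive $C_k$'s can still overlap, as the same example shows. So the claim that the $C_k$ are pairwise disjoint is unsupported.

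The fix is to discard blocks rather than merge them, which is what the paper does. Recursively choose $m_0<m_1<\cdots$ so that $B_{m_{k+1}}$ is disjoint from the finite set $B_{m_0}\cup\dots\cup B_{m_k}$. This is possible by your own (correct) observation that only finitely many $B_n$ meet a given finite set. Any $U\in\mathcal{F}$ meets all but finitely many $B_n$, so it meets all but finitely many $B_{m_k}$.

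The paper phrases this through condition (2). It takes the fibres $f^{-1}(\{k\})$ of a map witnessing $\mathfrak{Fr}\le_{RB}\mathcal{G}$ and thins them to a subsequence whose images under the reduction map are pairwise disjoint. Alternatively, you can keep your grouping with the relaxed condition and use only the even-indexed groups $C_{2k}$; these are pairwise disjoint.
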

\begin{proof}
The ``only if" is a direct consequence of Talagrand's characterization. It remains to prove the ``if" direction. Suppose that $\mathfrak{Fr} \le_{RB}\mathcal{G}$. We must show $\mathfrak{Fr} \le_{RB} \mathcal{F}$. Equivalently, we need to show that there exists a sequence $\langle A_n : n \in\omega\rangle$ of finite, pairwise disjoint subsets of $\omega$ such that every element of $\mathcal{F}$ intersects all but finitely many of the sets $A_n$. 

Fix two finite-to-one maps $f, g : \omega \rightarrow \omega$ witnessing $\mathfrak{Fr} \le_{RB} \mathcal{G}$ and $\mathcal{F} \le_{RB} \mathcal{G}$, respectively. We inductively define an increasing sequence $\langle k_n : n\in\omega\rangle$ of natural numbers as follows. First let $k_0 = 0$. Now suppose that $k_m$ is defined for every $m \le n$. Since both $f$ and $g$ are finite-to-one, the set $g^{-1}(g[\bigcup_{m \le n} f^{-1}(\{k_m\})])$ is finite. Thus, we can pick $k_{n+1} > k_n$ such that $f^{-1}(\{k_{n+1}\})$ is disjoint from the latter set. This completes the recursive definition.

For each $n$, let $A_n \coloneqq g[f^{-1}(\{k_n\})]$. Since $f$ is finite-to-one, the sets $A_n$ are finite. Moreover, it follows directly from our construction that they are pairwise disjoint. 

Pick some $U \in \mathcal{F}$. By the way we chose $f$ and $g$, $g^{-1}(U) \in \mathcal{G}$, and therefore $f[g^{-1}(U)] \in \mathfrak{Fr}$. Let $m$ be such that $\omega \setminus k_m \subseteq f[g^{-1}(U)]$. Then, for every $n \ge m$, $k_n \in f[g^{-1}(U)]$, so $f^{-1}(\{k_n\}) \cap g^{-1}(U) \neq \emptyset$, and hence $A_n \cap U \neq \emptyset$. In particular, $U$ intersects all but finitely many $A_n$. 
\end{proof}

Finally, let us also recall a game-theoretic characterization of meager filters due to Laflamme \cite{MR1367134}. For a given filter $\mathcal{F}$, consider the following game $\mathcal{G}(\mathcal{F})$. In this game, at the $k$-th round, Player I plays a natural number $n_k$, and then Player II plays another natural number $m_k$
\begin{center}
\begin{tabular}{cccccccc}
I & $n_0$ & & $n_1$ & & $n_2$ & & \dots \\
II & & $m_0$ & & $m_1$ & & $m_2$ & \dots\\
\end{tabular}
\end{center}
\noindent with the rule: $n_k \le m_k \le n_{k+1}$ for every $k \in \omega$. At the end of a play, Player II \emph{wins} if and only if  $\bigcup_{k\in\omega} [n_k, m_k) \in \mathcal{F}^+$.
\begin{theorem}[{Laflamme, \cite[essentially Theorem 2.12]{MR1367134}}]\label{thm:laflamme}
For a given filter $\mathcal{F}$, Player II has a winning strategy in $\mathcal{G}(\mathcal{F})$ if and only if $\mathcal{F}$ is meager.
\end{theorem}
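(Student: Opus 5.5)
We prove both directions of Laflamme's theorem using Talagrand's characterization (Lemma~\ref{lemma:characterizationmeager}).

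\textbf{If $\mathcal{F}$ is meager, Player~II has a winning strategy.} By condition~(3) of Lemma~\ref{lemma:characterizationmeager}, fix finite, pairwise disjoint sets $A_j$ such that every $U\in\mathcal{F}$ meets all but finitely many $A_j$.

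Player~II plays as follows. When Player~I plays $n_k$, Player~II picks an index $j$ with $\min A_j\ge n_k$ that has not been used in earlier rounds. Only finitely many indices have $A_j\not\subseteq[n_k,\infty)$, since the $A_j$ are disjoint finite subsets of $\omega$. Player~II then plays $m_k\coloneqq \max A_j+1$, so that $A_j\subseteq[n_k,m_k)$. The rule $n_k\le m_k$ holds, and Player~I must respond with $n_{k+1}\ge m_k$.

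Let $H=\bigcup_k[n_k,m_k)$. Then $H$ contains infinitely many distinct $A_j$. Take any $U\in\mathcal{F}$. It meets all but finitely many $A_j$, so it meets infinitely many of the $A_j$ contained in $H$. Since these are disjoint, $U\cap H$ is infinite. Hence $H\in\mathcal{F}^+$, and Player~II wins.

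\textbf{If $\mathcal{F}$ is not meager, Player~II has no winning strategy.} We show that any strategy $\sigma$ for Player~II can be defeated.

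\emph{Bookkeeping.} For a finite position $s$ in which Player~I has just played, let $\sigma(s)$ be Player~II's reply. For each $n$, define a finite set $B_n$ as follows: consider all positions $s$ whose moves are all $<n$ and which end with a move of Player~I that is $\ge n$, or more cleanly, all ways Player~I can play $n$ as the next move after such a position. Since there are only finitely many positions with all moves below $n$, the set
\[
B_n\coloneqq\bigcup\{[n,\sigma(s^\smallfrown n)) : s \text{ a position with all entries }<n\}
\]
is a finite interval $[n,g(n))$. Iterating gives $\ell_0<\ell_1<\cdots$ with $\ell_{i+1}\ge g(\ell_i)$, where every reply of $\sigma$ to a move $\ell_i$ made from a position bounded by $\ell_i$ lies below $\ell_{i+1}$.

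\emph{Using non-meagerness.} Set $C_i=[\ell_i,\ell_{i+1})$. Since $\mathcal{F}$ is not meager, condition~(3) fails for the interval partition obtained by taking unions of pairs of consecutive blocks. In particular, we can find $U\in\mathcal{F}$ and an infinite set of indices $i$ such that $U\cap(C_i\cup C_{i+1})=\emptyset$. Thinning this set, we may assume the chosen indices are pairwise at distance $\ge 2$.

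\emph{Defeating $\sigma$.} Player~I now plays $n_k\coloneqq\ell_{i_k}$ along an increasing enumeration of the selected indices. Each time, the previous position is bounded by $\ell_{i_k}$, so $\sigma$ answers with $m_k<\ell_{i_k+1}$. This gives $[n_k,m_k)\subseteq C_{i_k}$, which is disjoint from $U$. For the next move, Player~I chooses $i_{k+1}>i_k+1$ from the selected set, so the rule $m_k\le n_{k+1}$ is respected. Then $H\cap U=\emptyset$, so $H\notin\mathcal{F}^+$ and Player~II loses.

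\emph{Main obstacle.} The delicate point is extracting, from the failure of condition~(3), a single $U$ that misses infinitely many whole blocks. Non-meagerness of $\mathcal{F}$ gives this directly: if every $U\in\mathcal{F}$ met all but finitely many of the finite disjoint sets $C_i\cup C_{i+1}$ along a fixed partition, then condition~(3) would hold and $\mathcal{F}$ would be meager. Hence some $U\in\mathcal{F}$ misses infinitely many of these sets.

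We also have to make sure the blocks are defined so that $\sigma$'s responses stay inside them, which the choice of the sequence $(\ell_i)$ guarantees. Working with blocks $C_i\cup C_{i+1}$ rather than single blocks is a safety margin and is not strictly needed.
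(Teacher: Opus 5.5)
Your two directions follow the paper's sketch. The easy direction is exactly the strategy built from Talagrand's condition~(3). For the hard direction you build blocks that trap $\sigma$'s replies, use non-meagerness to get $U\in\mathcal{F}$ missing infinitely many blocks, and let Player~I play their left endpoints. The paper phrases this as ``a winning $\sigma$ makes $\langle [M_k,M_{k+1}) : k\in\omega\rangle$ witness condition~(3)'', which is just the contrapositive of your formulation.

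One step fails as written, however. You assert that there are only finitely many positions with all moves below $n$, and the finiteness of $g(n)$, hence the existence of $\langle \ell_i : i\in\omega\rangle$, rests on this. The rules only require $n_k\le m_k\le n_{k+1}$, so Player~II may answer $m_k=n_k$ and Player~I may then repeat a move. Positions with all entries below $n$ can therefore be arbitrarily long, and in the paper's formalism $\sigma$ is defined on all finite sequences anyway. For example, let $\sigma$ answer every $\langle 0,\ldots,0\rangle$ with $0$, and answer $\langle 0,\ldots,0\rangle^\smallfrown\langle n\rangle$, with $k$ zeros and $n\ge 1$, by $n+k$. Then all these positions are legal and $\sup_s \sigma(s^\smallfrown\langle n\rangle)=\infty$, so $g(n)$ does not exist.

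The repair is easy but needed. Take the maximum only over strictly increasing $s$ with entries below $n$ (there are at most $2^n$ of them). This suffices because your defeating play $\ell_{i_0}<\ell_{i_1}<\cdots$ only passes through strictly increasing positions. Alternatively, bound the length as the paper does: $M_{k+1}$ is a maximum over positions of length at most $k+1$ with entries at most $M_k$, and Player~I's $j$-th move $M_{k_j}$ satisfies $j\le k_j$. With either fix the rest of your argument goes through. Note that you only get $m_k\le \ell_{i_k+1}$, not $<$, but that is all you use.
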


Before sketching a proof of Theorem~\ref{thm:laflamme}, let us briefly recall what a strategy is in this context. A strategy for Player II in Laflamme's game $\mathcal{G}(\mathcal{F})$ is a function $\sigma$ that assigns to each finite nonempty sequence $\langle n_0, \ldots n_k\rangle$ of natural numbers a natural number $\sigma(\langle n_0, \ldots n_k\rangle) \ge n_k$. The strategy $\sigma$ is \emph{winning} if for every infinite sequence $\langle n_k : k\in\omega\rangle$ of natural numbers satisfying $n_{k+1} \ge \sigma (\langle n_0, \ldots n_k\rangle)$ for every $k$, we have 
\[
\bigcup_{k\in\omega} [n_k, \sigma (\langle n_0, \ldots, n_k\rangle)) \in \mathcal{F}^+.
\]

\begin{proof}[Proof sketch of Theorem~\ref{thm:laflamme}]
The ``if" direction is a direct consequence of Talagrand's characterization. Indeed, if $\langle A_n : n\in\omega\rangle$ witnesses Lemma~\ref{lemma:characterizationmeager}(3), then Player II wins by playing at each turn $k$, a natural number $m_k$ big enough so that for some $n > k$, $A_n \subseteq [n_k, m_k)$.

For the ``only if" direction, suppose that $\sigma$ is a winning strategy for Player II and let us define inductively a sequence $\langle M_k : k\in\omega\rangle$ of natural numbers as follows: first let $M_0 = 0$; then, for each $k \ge 0$, let
\[
M_{k+1} = \max \big\{\sigma(\langle n_0, \ldots, n_i\rangle) : i \le k \text{ and } \forall j\le i \ (n_j \le M_k)\big\}.
\]
Then, an argument by contraposition shows that the sequence $\langle [M_k, M_{k+1}) : k\in\omega\rangle$ witnesses Lemma~\ref{lemma:characterizationmeager}(3). Indeed, if there were $U \in \mathcal{F}$ and an increasing sequence $\langle k_i : i \in\omega\rangle$ of natural numbers such that $U \cap [M_{k_i}, M_{k_i+1}) = \emptyset$ for every $i$, then Player I would win against the strategy $\sigma$ by playing the sequence $\langle M_{k_i} : i\in\omega\rangle$.
\end{proof}

Laflamme's game will appear in Lemma~\ref{lemma:game}, where it is compared with an auxiliary game arising from the forcing argument for Theorem~\ref{thm:main}.

\section{Local breadth and quotients}\label{sec:quotient}

In this section we study the relationship between the finite breadth of a join-semilattice and its ideal quotients. Let us start by introducing a local version of breadth.

\begin{definition}
Let $P$ be a join-semilattice and $n\in\omega$. Given $x \in P$, we say that $P$ has \emph{breadth at most $n$ at $x$} if, for every nonempty finite subset $X$ of  $P$ with $\bigvee X = x$, there exists $Y\subseteq X$ with at most $n$ elements such that $\bigvee Y = x$. The \emph{breadth} of $P$ \emph{at $x$}, denoted by $\mathrm{breadth}_P(x)$, is the least $n\in\omega$ such that $P$ has breadth at most $n$ at $x$, if such an $n$ exists.
\end{definition}

For a given $x \in P$, note that $\mathrm{breadth}_P(x) = 0$ if and only if $x = 0_P$, and $\mathrm{breadth}_P(x) = 1$ if and only if $x$ is join-irreducible---i.e., $x \neq 0_P$ and $x = y \vee z$ implies that either $x = y$ or $x = z$. Furthermore, it follows readily that for every join-semilattice $P$ of finite breadth
\[
\mathrm{breadth}(P) = \max_{x \in P} \mathrm{breadth}_P(x).
\]

The objective of this section is the proof of the following theorem. Recall that, given an element $x$ of a join-semilattice $P$ and an ideal $I \subseteq P$, the set $x \vee I$ denotes the join-subsemilattice $\{x \vee y: y \in I\}$.

\begin{theorem}\label{thm:dimension}
Given a join-semilattice $P$ of finite breadth, an ideal $I\subseteq P$, and an element $x \in P$, there exists $y \in P$ with $y \sim_I x$ such that
\[
\mathrm{breadth}(y \vee I) \le \mathrm{breadth}(P) - \mathrm{breadth}_{P/I}([x]_I).
\]
\end{theorem}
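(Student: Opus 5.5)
The plan is to choose $y$ as the join of a minimal witness for the local breadth of $[x]_I$ in $P/I$. We then derive the bound on $\mathrm{breadth}(y\vee I)$ from the breadth of $P$ by a counting argument that merges two joins into one set.

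\textbf{Setup and choice of $y$.} Let $n=\mathrm{breadth}(P)$. The quotient map $q_I$ is a surjective join-homomorphism, so $P/I$ has breadth at most $n$. Hence $k\coloneqq\mathrm{breadth}_{P/I}([x]_I)$ exists and $k\le n$.

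If $k=0$, we take $y=x$. In that case $y\vee I$ is a join-subsemilattice of $P$, with joins computed in $P$, so its breadth is at most $n$, as required.

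If $k\ge 1$, we proceed as follows.
\begin{itemize}
\item By definition of local breadth, and since $P/I$ has breadth at most $k-1$ at $[x]_I$ fails, there is a finite set of classes with join $[x]_I$ which admits no subfamily of size $\le k-1$ with the same join.
\item From it we extract a subfamily of minimum size with join $[x]_I$. That size is exactly $k$.
\item Pick representatives $x_1,\dots,x_k\in P$, so that $[x_1]_I\vee\dots\vee[x_k]_I=[x]_I$, and no proper subfamily of $\{[x_1]_I,\dots,[x_k]_I\}$ has join $[x]_I$.
\item Set $y\coloneqq x_1\vee\dots\vee x_k$. Then $[y]_I=[x]_I$, i.e.\ $y\sim_I x$.
\end{itemize}

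\textbf{The breadth bound.} We show that $y\vee I$ has breadth at most $n-k$ using the lemma characterizing breadth via $(n-k+1)$-element sets. Take $X=\{y\vee a_1,\dots,y\vee a_{n-k+1}\}\subseteq y\vee I$ with $a_j\in I$. Suppose, towards a contradiction, that no $(n-k)$-element subset of $X$ has the same join. Put $a\coloneqq a_1\vee\dots\vee a_{n-k+1}\in I$, so $\bigvee X=y\vee a$.

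Consider $Z\coloneqq\{x_1,\dots,x_k,a_1,\dots,a_{n-k+1}\}$. It has at most $n+1$ elements and $\bigvee Z=y\vee a$. Since $P$ has breadth at most $n$, some subset of $Z$ with at most $n$ elements has the same join. If $Z$ has repetitions or fewer than $n+1$ elements, the argument below adapts trivially. So some element of $Z$ can be omitted without changing the join, and there are two cases.
\begin{itemize}
\item If the omitted element is some $a_j$, then $y\vee\bigvee_{i\ne j}a_i=y\vee a$. Thus $X\setminus\{y\vee a_j\}$, which has $n-k$ elements, has join $\bigvee X$, contradicting our assumption.
\item If the omitted element is some $x_i$, then $\bigvee_{l\ne i}x_l\vee a=y\vee a$. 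Since $a\in I$, passing to $P/I$ gives $\bigvee_{l\ne i}[x_l]_I=[y]_I=[x]_I$. This contradicts the minimality of the family $[x_1]_I,\dots,[x_k]_I$.
\end{itemize}
Both cases are impossible, so $\mathrm{breadth}(y\vee I)\le n-k$.

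\textbf{Main obstacle.} The delicate point is the choice of $y$. It must be the join of an \emph{irredundant} witness for the local breadth in the quotient, not $x$ itself. Only then does dropping some $x_i$ contradict the local breadth in $P/I$. This works because joining with an element of $I$ is invisible modulo $\sim_I$.
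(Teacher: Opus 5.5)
Your proposal is correct and is essentially the paper's proof: $y$ is the join of representatives of an irredundant $k$-element family of classes with join $[x]_I$, and applying breadth $n$ to $\{x_1,\dots,x_k\}\cup\{a_1,\dots,a_{n-k+1}\}$ shows that some $a_j$ is redundant, since dropping an $x_i$ would contradict irredundancy in $P/I$. The paper differs only cosmetically. It cites Lemma~\ref{lemma:basicproj} for $k=1$, and it treats $k=n$ separately: there the conclusion is simply $a\le y$ for all $a\in I$, i.e.\ $y\vee I=\{y\}$, which is how your empty join should be read. It also disposes of your ``repetitions'' caveat by noting that irredundancy gives $x_i\notin I$, so the set has exactly $n+1$ elements.
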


Let us start by recalling the following lemma due to Ditor. We omit its proof, since it uses essentially the same argument as the proof of Theorem~\ref{thm:dimension}. The lemma implies the case of Theorem~\ref{thm:dimension} in which $\mathrm{breadth}_{P/I}([x]_I)=1$; in fact, in this case one may take $y=x$.

\begin{lemma}[{\cite[Lemma 5.1]{MR0732199}}]\label{lemma:basicproj}
Given a join-semilattice $P$ of finite breadth, a proper ideal $I \subsetneq P$, and an element $x \in P \setminus I$, $\mathrm{breadth}(x \vee I) < \mathrm{breadth}(P)$.
\end{lemma}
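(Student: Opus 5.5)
Let $P$ be a join-semilattice of finite breadth $n$, $I\subsetneq P$ a proper ideal and $x\in P\setminus I$. Suppose, towards a contradiction, that $x\vee I$ has breadth at least $n$. Since $x\vee I$ is a join-subsemilattice of $P$, its breadth is at most $n$, so it is exactly $n$. I would then fix an $n$-element set $\{x\vee a_1,\dots,x\vee a_n\}\subseteq x\vee I$, with $a_i\in I$, which is \emph{irredundant in $x\vee I$}: no proper subset has the same join. Set $a=a_1\vee\dots\vee a_n\in I$. The plan is to produce $n+1$ elements of $P$ whose join cannot be obtained from any $n$ of them, contradicting $\mathrm{breadth}(P)\le n$.

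The natural candidate is $X=\{a_1,\dots,a_n,x\}$, with $\bigvee X=x\vee a$. For each $i$, dropping $a_i$ gives $x\vee\bigvee_{j\neq i}a_j$. By irredundance of the $x\vee a_j$'s, this differs from $x\vee a$. Indeed, $x\vee\bigvee_{j\ne i}a_j$ is exactly the join of $\{x\vee a_j: j\neq i\}$ when $n\ge 2$. When $n=1$, irredundance of the singleton $\{x\vee a_1\}$ only says it is not equal to the join of the empty set, and that case needs separate handling: breadth $\ge1$ of $x\vee I$ is automatic, so the claim is that $x\vee I$ is a single point. It remains to check that dropping $x$ also fails, i.e.\ $a\neq x\vee a$. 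But $a\in I$ and $x\notin I$, so $x\not\le a$, hence $x\vee a\neq a$. So $X$ witnesses breadth $>n$, which is the contradiction.

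The main obstacle is the degenerate case, and more precisely whether the irredundant $n$-subset really exists. Breadth exactly $n$ of $x\vee I$ means some finite subset needs $n$ elements; taking a minimal subset with the same join yields the irredundant family I want. One subtlety is that ``irredundant'' must be taken relative to nonempty proper subsets. For $n=1$ the target statement is breadth $0$, i.e.\ $x\vee I=\{x\vee 0\}$ is a singleton. Here I would argue directly: if $x\vee a\ne x\vee b$ with $a,b\in I$, then one of them, say $x\vee a$, is not above the other. The set $\{a,b,x\}$ then gives a contradiction with breadth $1$, i.e.\ with $P$ being a chain: in a chain, $x\notin I$ forces $x>a$ and $x>b$, so $x\vee a=x=x\vee b$. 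The $n\ge2$ case is handled uniformly by the argument above, and I would double-check that the index bookkeeping (joins over $j\ne i$ being nonempty) holds there.
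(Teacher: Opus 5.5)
Your proof is correct and is essentially the argument the paper points to for this lemma (the case $m=1$ of the proof of Theorem~\ref{thm:dimension}), run by contradiction: you lift an irredundant $n$-subset $x\vee B$ of $x\vee I$ to the $(n+1)$-set $B\cup\{x\}$, note that $x$ cannot be dropped because $x\not\le\bigvee B\in I$, and treat $n=1$ separately, just as the paper does. The mentions of $\{a,b,x\}$ and of a least element $0$ in your $n=1$ paragraph are unnecessary, since the chain argument alone gives $x\vee I=\{x\}$.
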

%
%
%

Before delving into the proof of Theorem~\ref{thm:dimension}, let us record the following corollary of Lemma~\ref{lemma:basicproj}, which was already anticipated in the introduction and allows us to reduce the statement of Theorem~\ref{thm:main} to the case $n = 3$.

\begin{corollary}\label{cor:induction}
If there is no lower finite lattice of breadth $3$ and cardinality $\aleph_2$, then, for every $n \ge 3$, there is no lower finite lattice of breadth $n$ and cardinality $\aleph_{n-1}$.
\end{corollary}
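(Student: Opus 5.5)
We argue by induction on $n\ge 3$, proving the contrapositive of the inductive step. The base case $n=3$ is the hypothesis. Suppose $n>3$ and let $P$ be a lower finite lattice of breadth $n$ and cardinality $\aleph_{n-1}$. We will produce a lower finite lattice of breadth $n-1$ and cardinality $\aleph_{n-2}$. Iterating this step then contradicts the hypothesis at $n=3$.

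The natural candidate is a set of the form $x\vee I$, where $I$ is an ideal of $P$ of cardinality $\aleph_{n-2}$ and $x\notin I$.
\begin{enumerate}
\item \emph{Choosing $I$.} Write $P=\bigcup_{\alpha<\omega_{n-1}}P_\alpha$ as a continuous increasing union of sub-join-semilattices $P_\alpha$ of cardinality $\le\aleph_{n-2}$. Each $P_\alpha$ is automatically downward closed after closing under the finite sets ${\downarrow}y$, since $P$ is lower finite. So we may take each $P_\alpha$ to be an ideal, and for large $\alpha$ it has cardinality exactly $\aleph_{n-2}$. Fix such an ideal $I=P_\alpha$. It is proper, since $|I|<|P|$.
\item \emph{Choosing $x$.} Pick $x\in P\setminus I$. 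By Lemma~\ref{lemma:basicproj}, $\mathrm{breadth}(x\vee I)\le n-1$.
\item \emph{Lower finiteness.} The set $x\vee I$ is a join-subsemilattice with least element $x=x\vee 0_P$. For $z\in x\vee I$, its principal ideal in $x\vee I$ is contained in ${\downarrow}z$ of $P$, hence finite. So $x\vee I$ is lower finite. A lower finite join-semilattice with a least element is a lattice, because the meet of $a,b$ is the join of the finite nonempty set of common lower bounds.
\end{enumerate}

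It remains to arrange $|x\vee I|=\aleph_{n-2}$. This is the main obstacle, because the map $y\mapsto x\vee y$ could collapse $I$.

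The fiber over any value $z$ is contained in ${\downarrow}z$, so it is finite. Since $I$ is infinite, $x\vee I$ therefore has the same cardinality as $I$, namely $\aleph_{n-2}$. The obstacle thus disappears.

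By Ditor's Theorem~\ref{thm:ditor} with $\kappa=\aleph_0$, a lower finite lattice of cardinality $\aleph_{n-2}$ has breadth at least $n-1$. Combined with step 2, $x\vee I$ has breadth exactly $n-1$. This completes the inductive step. If one prefers to avoid the filtration in step 1, any ideal generated by a subset of size $\aleph_{n-2}$ also works: an ideal generated by $\aleph_{n-2}$ elements has size $\aleph_{n-2}$ because $P$ is lower finite, and it is proper because $|P|=\aleph_{n-1}$.
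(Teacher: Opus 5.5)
Your proposal is correct and follows essentially the same route as the paper: take an ideal $I$ generated by a subset of size $\aleph_{n-2}$ (your filtration in step 1 is unnecessary), pick $x\notin I$, and apply Lemma~\ref{lemma:basicproj} to get $\mathrm{breadth}(x\vee I)\le n-1$; then use lower finiteness (your finite-fiber argument is the paper's observation that $I$ lies in the downward closure of $x\vee I$) to get $|x\vee I|=|I|$. Your final appeal to Ditor's Theorem to pin the breadth at exactly $n-1$ corresponds to the paper's remark in Section~\ref{sec:prel:ditor} that, for lower finite join-semilattices of cardinality $\aleph_{n-1}$, breadth at most $n$ is the same as breadth exactly $n$.
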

\begin{proof}
It suffices to prove that if there is a lower finite lattice of breadth $n > 0$ and cardinality $\aleph_{k}$, for some $k > 0$, then there exists a lower finite lattice of breadth at most $n-1$ and cardinality $\aleph_{k-1}$. The result then follows by a straightforward induction. 

Pick a lower finite lattice $P$ of breadth $n$ and cardinality $\aleph_k$. Pick a (proper) ideal $I \subseteq P$ of cardinality $\aleph_{k-1}$---just let $I$ be the ideal generated by some subset of $P$ of cardinality $\aleph_{k-1}$. Fix also an $x \in P \setminus I$. By Lemma~\ref{lemma:basicproj}, the join-subsemilattice $x \vee I$ has breadth at most $n-1$. Moreover, note that $|x \vee I| = |I|$. Clearly, $|x \vee I| \le |I|$; also, since $I$ is included in the downward closure of $x \vee I$ and $P$ is lower finite, we have $|I| \le |x \vee I| \cdot \aleph_0 = |x \vee I|$. Overall, $x \vee I$ is a lower finite join-semilattice of breadth at most $n-1$ and cardinality $\aleph_{k-1}$. As every lower finite join-semilattice with a least element is a lattice, $x \vee I$ is actually a lattice, and we are done.
\end{proof}

We are ready to prove Theorem~\ref{thm:dimension}.

\begin{proof}[Proof of {\upshape Theorem~\ref{thm:dimension}}]
If $P$ has breadth $0$, then the claim follows trivially. So fix a join-semilattice $P$ of breadth $n > 0$, an ideal $I \subseteq P$, and an element $x \in P$ such that $\mathrm{breadth}_{P/I}([x]_I) = m \in \omega$. We want to show that there is $y \in [x]_I$ such that $\mathrm{breadth}(y \vee I) \le n-m$.

First note that it follows directly from the definition of the quotient join operator that $\mathrm{breadth}(P/I) \le \mathrm{breadth}(P)$, and hence $n \ge m$. Moreover, since $x \vee I$ is a join-subsemilattice of $P$, we have $\mathrm{breadth}(x \vee I) \le \mathrm{breadth}(P)$. In particular, our claim holds when $m = 0$. 

If $m = 1$, then, by letting $y  = x$, our claim follows directly from Lemma~\ref{lemma:basicproj}. Indeed, note that $\mathrm{breadth}_{P/I}([x]_I) > 0$ is equivalent to $x \not\in I$. Hence, from now on, we assume $m > 1$.

Since $\mathrm{breadth}_{P/I}([x]_I) = m$, we can fix some $F \subseteq P/I$ of size $m$ such that $\bigvee F = [x]_I$ and such that for every $H \in [F]^{m-1}$, $\bigvee H < [x]_I$. 

Pick $x_0, \ldots, x_{m-1} \in P$ such that $F = \{[x_i]_I : i < m\}$, and denote $\bigvee_{i < m} x_i$ by $y$. By the way we chose $F$, we must have $x_i \not\in I$ for every $i < m$. Moreover, it directly follows from the quotient map $q_I$ being a join-homomorphism that $[y]_I = \bigvee F = [x]_I$. We claim that $y$ satisfies the desired property, i.e., $\mathrm{breadth}(y \vee I) \le n-m$.

Fix $A \subseteq y \vee I$ of size $n-m+1$ and pick also a set $B \in [I]^{n-m+1}$ such that $y \vee B = A$.

Consider the set $B \cup \{x_0, \ldots, x_{m-1}\}$. Since $B \subseteq I$ and $x_i \not\in I$ for every $i < m$, we conclude that the set $B \cup \{x_0, \ldots, x_{m-1}\}$ has size $n+1$. Since $P$ has breadth $n$, we can pick some $D \in [B \cup \{x_0, \ldots, x_{m-1}\}]^{n}$ such that $\bigvee D = y \vee \bigvee B = \bigvee A$. 

Note that $x_i$ must belong to $D$ for every $i < m$. Otherwise, if there were some $i < m$ such that $x_i\not\in D$, we would conclude that $x_i \le (\bigvee_{j \neq i} x_j) \vee \bigvee B$, and therefore, by applying the quotient map $q_I$ to both sides, we would have $[x_i]_I \le \bigvee (F \setminus \{[x_i]_I\})$, contradicting the way we chose $F$.

Since $D$ has size $n$ and every $x_i$ belongs to it, there must be some $z \in B$ such that $z\not\in D$. Hence 
\begin{equation}\label{eq:dimension}
z \le y \vee \bigvee (B \setminus \{z\}).
\end{equation}

If $n = m$, then $B = \{z\}$ and \eqref{eq:dimension} is equivalent to $z \le y$. In particular, by the arbitrariness of $A$, in this case we conclude that $y$ is an upper bound of $I$, or, equivalently, that $y \vee I = \{y\}$, which proves our claim, as then $\mathrm{breadth}(y \vee I) = 0 = n-m$. 

If $n > m$ instead,  \eqref{eq:dimension} gives us
\[
\bigvee A = y \vee \bigvee B = y \vee \bigvee (B \setminus \{z\}) = \bigvee (y \vee (B \setminus \{z\})).
\]
Therefore, the set $y \vee (B \setminus \{z\})$ is a subset of $A$ of size $n-m$ whose join is the same as the join of $A$. This shows that $\mathrm{breadth}(y \vee I) \le n-m$, and we are done.
\end{proof}

The element $y$ in the statement of Theorem~\ref{thm:dimension} cannot in general be taken to be $x$. Indeed, consider the join-semilattice $P$ represented by the following Hasse diagram, where the dashed ellipse encloses an ideal $I$:\vspace{0.5em}
\usetikzlibrary{fit,shapes.geometric}
\begin{center}
\begin{tikzpicture}[
    x=1cm,
    y=1cm,
    vertex/.style={circle,fill,inner sep=1.25pt},
    every path/.style={line width=0.45pt,line cap=round}
]
    \node[vertex,label=right:$x$] (x) at (0,0) {};
    \node[vertex] (a) at (-0.5,1) {};
    \node[vertex] (b) at ( 0.5,1) {};

    \node[vertex] (d) at (0,2) {};

    \node[vertex] (e) at (-2,1.8) {};
    \node[vertex] (g) at (-0.5,3) {};
    \node[vertex] (h) at ( 0.5,3) {};
    \node[vertex] (f) at ( 2,1.8) {};

    \node[vertex] (i) at (0,4) {};

 \node[ draw, dashed, ellipse, fit=(a)(b)(d), label={[yshift=8pt]left:$I$} ] {};
    \draw
        (a)--(d)
        (a)--(e)
        (b)--(d)
        (b)--(f)
        (x)--(e)
        (x)--(f)
        (d)--(g)
        (d)--(h)
        (e)--(i)
        (g)--(i)
        (h)--(i)
        (f)--(i);
\end{tikzpicture}\vspace{0.5em}
\end{center}
The join-semilattice $P$ has breadth $3$, while $P/I$ is isomorphic to the four-element Boolean lattice $\mathsf{B}_2$. Moreover, $[x]_I$ is the top element of $P/I$, and hence $P/I$ has breadth $2$ at $[x]_I$. On the other hand, $x\vee I$ consists of two incomparable elements and their join, so it also has breadth $2$. Therefore, 
\[
\mathrm{breadth}(x \vee I) = 2 \not\le \mathrm{breadth}(P) - \mathrm{breadth}_{P/I}([x]_I) = 1.
\]

We now turn to some consequences of Theorem~\ref{thm:dimension}. First, recall that, as already noted in the proof of Theorem~\ref{thm:dimension}, taking quotients cannot increase the breadth: given a congruence relation $\sim$ on a join-semilattice $P$ of finite breadth, the definition of the join operation on $P/{\sim}$ immediately yields  $\mathrm{breadth}(P/{\sim})\leq\mathrm{breadth}(P)$. Theorem~\ref{thm:dimension} shows that this inequality is strict for quotients induced by unbounded ideals.

\begin{corollary}\label{cor:quotient}
Given a join-semilattice $P$ of finite breadth and an unbounded ideal $I\subseteq P$, $\mathrm{breadth}(P/I) < \mathrm{breadth}(P)$.
\end{corollary}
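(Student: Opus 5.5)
The plan is to apply Theorem~\ref{thm:dimension} to an element $x$ at which the quotient attains its breadth, and to use unboundedness of $I$ to see that the resulting join-subsemilattice $y \vee I$ cannot have breadth $0$.

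Let $n = \mathrm{breadth}(P)$ and $m = \mathrm{breadth}(P/I)$. We already know $m \le n$, since quotients cannot increase breadth. So it suffices to rule out $m = n$; assume towards a contradiction that $m = n$.

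First, I will note that $P/I$ has finite breadth, because its breadth is at most $n$. By the identity $\mathrm{breadth}(P/I) = \max_{c \in P/I} \mathrm{breadth}_{P/I}(c)$, we can pick $x \in P$ with $\mathrm{breadth}_{P/I}([x]_I) = m = n$. Theorem~\ref{thm:dimension} then gives some $y \sim_I x$ with
\[
\mathrm{breadth}(y \vee I) \le n - n = 0 .
\]
A join-semilattice of breadth $0$ is a singleton, so $y \vee I = \{y\}$. That is, $y \vee z = y$, or equivalently $z \le y$, for every $z \in I$. Thus $y$ is an upper bound of $I$, contradicting the assumption that $I$ is unbounded. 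Hence $m < n$.

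There is one degenerate case to check. If $n = 0$, then $P$ is a singleton, so every ideal is bounded and the statement is vacuous. Also, when $m = n$ we cannot have $n = 0$ unless $P$ is trivial, so the argument above applies without change.

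The only delicate point is interpreting the bound $\mathrm{breadth}(y\vee I)\le 0$ as "$y$ bounds $I$". This is immediate from the remark that breadth $0$ means the semilattice is a singleton, which also matches the $n=m$ case inside the proof of Theorem~\ref{thm:dimension}. So I do not expect a genuine obstacle.
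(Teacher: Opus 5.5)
Your proof is correct and is essentially the paper's argument. Unboundedness of $I$ forces $\mathrm{breadth}(y\vee I)>0$ for every $y$, so Theorem~\ref{thm:dimension} yields $\mathrm{breadth}_{P/I}([x]_I)<\mathrm{breadth}(P)$. The paper states this directly for every $x$ and then takes the maximum, whereas you argue by contradiction at an $x$ attaining the maximum, which amounts to the same thing.
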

\begin{proof}
For every $x \in P$, we must have $\mathrm{breadth}(x \vee I)>0$; otherwise, $x \vee I$ would be a singleton, and its unique element would be an upper bound of $I$, contrary to the assumption that $I$ is unbounded. Therefore, it follows from Theorem~\ref{thm:dimension} that $\mathrm{breadth}_{P/I}([x]_I) < \mathrm{breadth}(P)$ for every $x \in P$. Since $\mathrm{breadth}(P/I) = \max_{x \in P} \mathrm{breadth}_{P/I}([x]_I)$, we conclude that $\mathrm{breadth}(P/I) < \mathrm{breadth}(P)$.
\end{proof}

Finally, let us prove the following result, obtained by combining Theorem~\ref{thm:dimension} with Ditor's Theorem~\ref{thm:ditor}. It will play an important role in the next section.

\begin{theorem}\label{thm:chain}
Let $\kappa$ be an infinite cardinal, let $n$ be a positive integer, and let $P$ be a join-semilattice of breadth $n$ and cardinality $\kappa^{+(n-1)}$ whose principal ideals have cardinality $< \kappa$. Then, for every ideal $I \subseteq P$ of cardinality at most $\kappa$, there exists $x \in P$ such that $x \vee I$ is a chain.
\end{theorem}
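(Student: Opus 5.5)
The plan is to reduce to Theorem~\ref{thm:dimension} by showing that the quotient $P/I$ still has large breadth, and to get that lower bound from Ditor's Theorem~\ref{thm:ditor} applied to $P/I$ with the cardinal $\kappa^+$ in place of $\kappa$. If $n=1$, then $P$ itself is a chain and any $x$ works, so assume $n\ge 2$.

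\textbf{Step 1: equivalence classes and principal ideals of $P/I$ have size at most $\kappa$.} In $P/I$, $[y]_I\le[x]_I$ means $[x\vee y]_I=[x]_I$. That is, $x\vee y\vee z=x\vee z$ for some $z\in I$, so $y\le x\vee z$. Hence every element of ${\downarrow}[x]_I$ has a representative in $\bigcup_{z\in I}P\downarrow(x\vee z)$. This is a union of at most $\kappa$ sets, each of size $<\kappa$, so it has size at most $\kappa$. Thus every principal ideal of $P/I$ has cardinality $\le\kappa<\kappa^+$. The same computation shows that each class $[x]_I$ is contained in $\bigcup_{z\in I}{\downarrow}(x\vee z)$. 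Indeed, if $y\sim_I x$ then $y\le y\vee z=x\vee z$ for some $z\in I$. So each class has size at most $\kappa$.

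\textbf{Step 2: $P/I$ is large, hence has breadth at least $n-1$.} Since the classes have size $\le\kappa$ and $|P|=\kappa^{+(n-1)}>\kappa$ (as $n\ge2$), we get $|P/I|=\kappa^{+(n-1)}$. By the remark after Theorem~\ref{thm:dimension}, $\mathrm{breadth}(P/I)\le\mathrm{breadth}(P)=n$, so $P/I$ has finite breadth $m$. We have $m>0$, since $P/I$ is not a singleton. Ditor's Theorem~\ref{thm:ditor}(a), applied to $P/I$ with the cardinal $\kappa^+$, gives $\kappa^{+(n-1)}=|P/I|\le(\kappa^+)^{+(m-1)}=\kappa^{+m}$. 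Hence $m\ge n-1$.

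\textbf{Step 3: conclude with Theorem~\ref{thm:dimension}.} Since $\mathrm{breadth}(P/I)=\max_{x}\mathrm{breadth}_{P/I}([x]_I)$, pick $x\in P$ with $\mathrm{breadth}_{P/I}([x]_I)\ge n-1$. Theorem~\ref{thm:dimension} yields $y\sim_I x$ with $\mathrm{breadth}(y\vee I)\le n-(n-1)=1$. So $y\vee I$ is a chain, as desired.

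The only delicate point is Step 1, namely bounding the principal ideals of the quotient correctly so that Ditor's Theorem applies with $\kappa^+$. This needs the hypothesis $|I|\le\kappa$, and the argument goes through because $\sim_I$ is witnessed by joins with elements of $I$.
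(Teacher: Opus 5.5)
Your proof is correct and follows essentially the same route as the paper. You bound the $\sim_I$-classes and the principal ideals of $P/I$ by $\kappa$, deduce $|P/I|=\kappa^{+(n-1)}$, apply Ditor's Theorem~\ref{thm:ditor}(a) to get $\mathrm{breadth}(P/I)\ge n-1$, and finish with Theorem~\ref{thm:dimension}; you simply spell out details the paper leaves implicit, such as applying Ditor's bound with $\kappa^+$ in place of $\kappa$ and checking that $\mathrm{breadth}(P/I)>0$.
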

\begin{proof}
If $n = 1$, then the result follows trivially, since $P$ is itself a linear order. So let us assume $n > 1$. For every $x \in P$,
\[
[x]_I \subseteq \bigcup_{y \in I} {\downarrow}(x \vee y).
\] 
Since $I$ has cardinality at most $\kappa$ and the principal ideals of $P$ have cardinality  strictly less than $\kappa$, it follows that every equivalence class of $\sim_I$ must have cardinality at most $\kappa$. Similarly, every principal ideal of $P/I$ has cardinality $\le \kappa$.

As we are assuming $n > 1$, $\kappa^{+(n-1)}$ is a regular cardinal. Since every equivalence class of $\sim_I$ has cardinality at most $\kappa$, $P/I$ has the same cardinality as $P$. Overall,  $P/I$ is a join-semilattice of cardinality $\kappa^{+(n-1)}$ whose principal ideals have cardinality $\le \kappa$. It follows from the first inequality of Ditor's Theorem~\ref{thm:ditor} that $\mathrm{breadth}(P/I) \ge n-1$.  In particular, there must be some $x \in P$ such that $\mathrm{breadth}_{P/I}([x]_I) \ge n-1$. By Theorem~\ref{thm:dimension}, there exists $y \in P$ with $y \sim_I x$ such that $\mathrm{breadth}(y \vee I) \le n - (n-1) = 1$ or, equivalently, such that $y \vee I$ is a chain.
\end{proof}

\section{The filter of projected upper cones}\label{sec:proj}

In this section, we study projected upper cones in lower finite lattices and the filters they generate. Given an infinite lower finite lattice $P$, we associate to each infinite ideal $I\subseteq P$ the following family of subsets of $I$:
\[
\mathcal{F}_I^P \coloneqq \big\{U \subseteq I : \exists x \in P \ (\pi_I [P \uparrow x] \subseteq U)\big\}.
\]

When there is no risk of ambiguity, we just write $\mathcal{F}_I$ instead of $\mathcal{F}_I^P$. This family is the filter on the set $I$ generated by the projections of the upper cones of $P$ onto the ideal $I$.

\begin{lemma}
$\mathcal{F}_I$ is a filter on $I$.
\end{lemma}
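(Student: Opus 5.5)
Throughout, write $C_x \coloneqq \pi_I[P\uparrow x]$ for $x\in P$, so that $\mathcal{F}_I$ is the upward closure of the family $\{C_x : x\in P\}$. We verify the three clauses in the definition of a filter on $I$ (with $I$ infinite).

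\textbf{Intersections.} Given $U\supseteq C_x$ and $V\supseteq C_y$, we have $P\uparrow(x\vee y)\subseteq (P\uparrow x)\cap(P\uparrow y)$. Hence $C_{x\vee y}\subseteq C_x\cap C_y\subseteq U\cap V$, so $U\cap V\in\mathcal{F}_I$.

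\textbf{Nonemptiness and $I\in\mathcal{F}_I$.} Each $C_x$ is nonempty, since $x\in P\uparrow x$. Also $I\supseteq C_x$ for any $x$, so $I\in\mathcal{F}_I$ and $\emptyset\notin\mathcal{F}_I$.

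\textbf{Closure under almost-supersets.} Supersets are immediate. The real content, and the main obstacle, is closure under $\subseteq^*$: if $U\in\mathcal{F}_I$ and $U\subseteq^* V\subseteq I$, then $V\in\mathcal{F}_I$. This amounts to showing that for every $x\in P$ and every finite $E\subseteq I$ there is $x'\ge x$ with $C_{x'}\cap E=\emptyset$. Then $C_{x'}\subseteq C_x\setminus E$, and taking $E=U\setminus V$ gives $C_{x'}\subseteq V$. This also shows that every member of $\mathcal{F}_I$ is infinite, which is consistent with $\emptyset\notin\mathcal{F}_I$.

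To get $x'$, it suffices to handle a single $e\in E$ and iterate, since $C$ is antitone in $x$. Because $I$ is infinite and $P$ is lower finite, the set ${\downarrow}e$ is finite, so we can choose $z\in I$ with $z\not\le e$. Put $x'=x\vee z$.

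For any $w\ge x'$, the element $z$ lies in $I$ and satisfies $z\le w$, so $z\le\pi_I(w)$. Since $z\not\le e$, this forces $\pi_I(w)\ne e$. Hence $e\notin C_{x'}$.

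Repeating this once for each element of $E$ (raising $x$ by a suitable $z_e\in I$ each time) gives an $x'$ with $C_{x'}\cap E=\emptyset$. This completes the verification that $\mathcal{F}_I$ is a filter on $I$.
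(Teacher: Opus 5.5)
Your proof is correct and follows essentially the paper's argument. The paper shows $I\setminus F\in\mathcal{F}_I$ for finite $F\subseteq I$ by picking $x\in I$ with $\bigvee F<x$, so that $\pi_I[{\uparrow}x]=I\cap{\uparrow}x$ misses $F$, and then uses closure under intersections via $x\vee y$; this is the same mechanism as your joining $x$ with elements $z_e\in I$, $z_e\not\le e$. The differences are only organizational: you verify the $\subseteq^*$-closure clause directly, and you handle the finite set one element at a time rather than with a single element above $\bigvee F$.
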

\begin{proof}
Clearly, $I \in \mathcal{F}_I$, $\emptyset \not\in \mathcal{F}_I$, and $\mathcal{F}_I$ is closed under taking supersets. Now let us show that $\mathcal{F}_I$ contains the Fréchet filter on $I$. Let $F \subseteq I$ be a finite set. Since $I$ is infinite and $P$ is lower finite, there exists $x \in I$ with $\bigvee F < x$. Then, consider $\pi_I[{\uparrow} x]$. As $x \in I$, it is easy to see that $\pi_I[{\uparrow} x] = I \cap ({\uparrow}x)$. Hence, $F \cap \pi_I[{\uparrow}x] = \emptyset$. In particular, $I \setminus F \supseteq \pi_I[{\uparrow} x]$, and thus $I \setminus F \in \mathcal{F}_I$.

Now pick $U, V \in \mathcal{F}_I$. We show that $U \cap V \in \mathcal{F}_I$. By definition, there are $x,y \in P$ such that $U \supseteq \pi_I[{\uparrow} x]$ and $V \supseteq \pi_I[{\uparrow} y]$. But then, $\pi_I[{\uparrow} (x \vee y)] \subseteq   \pi_I[{\uparrow} x] \cap \pi_I[{\uparrow} y]$. Thus, $U \cap V \supseteq  \pi_I[{\uparrow} x] \cap \pi_I[{\uparrow} y] \supseteq  \pi_I[{\uparrow} (x \vee y)]$ and therefore $U \cap V\in \mathcal{F}_I$.
\end{proof}

We now prove some basic facts about the projections of upper cones.
\begin{lemma}\label{lemma:combproj}
Given a lower finite lattice $P$, an ideal $I \subseteq P$, and an element $x \in P$, the following hold: 
\begin{enumerate}[label={\upshape (\arabic*)}]
\item $\pi_I [{\uparrow}x]$ is a meet-subsemilattice of $P$ which is cofinal in $I$.
\item For every $y \in \pi_I [{\uparrow}x]$, $y = \pi_I(x \vee y)$.
\item The map $\pi_I \upharpoonright (x \vee I)$ is an isomorphism between $x \vee I$ and $\pi_I[{\uparrow} x]$ with their induced orderings.
\end{enumerate}
\end{lemma}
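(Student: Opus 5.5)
The plan is to prove (2) first, since it does most of the work, then derive (3), and finally read off (1) from (3) together with Lemma~\ref{lemma:meetproj}.

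\emph{Step 1: proof of (2).} Let $y\in\pi_I[{\uparrow}x]$, so $y=\pi_I(z)$ for some $z\ge x$. Since $y\in I$ and $y\le x\vee y$, we have $y\le\pi_I(x\vee y)$. Conversely, $x\vee y\le z$ because $x\le z$ and $y=\pi_I(z)\le z$. Monotonicity and idempotence of $\pi_I$ then give $\pi_I(x\vee y)\le\pi_I(z)=y$.

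\emph{Step 2: proof of (3).} The map $\pi_I$ sends $x\vee I$ into $\pi_I[{\uparrow}x]$, because every element of $x\vee I$ lies above $x$. The map $y\mapsto x\vee y$ sends $\pi_I[{\uparrow}x]\subseteq I$ into $x\vee I$. By (2), the composite $y\mapsto x\vee y\mapsto\pi_I(x\vee y)$ is the identity on $\pi_I[{\uparrow}x]$.

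For the other composite, take $w=x\vee v$ with $v\in I$. Then $v\le\pi_I(w)\le w$. Hence $x\vee\pi_I(w)$ lies between $x\vee v=w$ and $w$, so $x\vee\pi_I(w)=w$. Thus the two maps are mutually inverse bijections. Both are order-preserving, since $\pi_I$ and $x\vee(-)$ are. So $\pi_I\upharpoonright(x\vee I)$ is an order-isomorphism.

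\emph{Step 3: proof of (1).}
\begin{itemize}
\item Meet-closure. For $y_0,y_1\in\pi_I[{\uparrow}x]$, write $w_i=x\vee y_i\ge x$. Then $w_0\wedge w_1\ge x$. By Lemma~\ref{lemma:meetproj} and (2),
\[
\pi_I(w_0\wedge w_1)=\pi_I(w_0)\wedge\pi_I(w_1)=y_0\wedge y_1,
\]
so $y_0\wedge y_1\in\pi_I[{\uparrow}x]$.
\item Cofinality. For $v\in I$, the element $\pi_I(x\vee v)$ lies in $\pi_I[{\uparrow}x]$ and is $\ge v$, because $v\in I$ and $v\le x\vee v$.
\end{itemize}
Note that the meets here are taken in $P$, which is a lattice, and they land in $I$ by downward closure.

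The only mildly delicate point is the inequality $x\vee y\le z$ in Step 1, which needs the witness $z$ above $x$. Everything else is routine bookkeeping with monotonicity and idempotence of $\pi_I$.
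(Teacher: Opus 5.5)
Your proof is correct and follows essentially the same route as the paper: the same monotonicity argument for (2), the same sandwich $x\vee v\le x\vee\pi_I(x\vee v)\le x\vee v$ for (3), and Lemma~\ref{lemma:meetproj} together with $v\le\pi_I(x\vee v)$ for (1). The differences are cosmetic. You package (3) as a pair of mutually inverse monotone maps rather than as surjectivity plus injectivity. For the meet-closure in (1), you go through (2) via the witnesses $x\vee y_i$, whereas the paper simply notes that ${\uparrow}x$ is closed under meets and that $\pi_I$ is a meet-homomorphism.
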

\begin{proof}
That $\pi_I [{\uparrow}x]$ is a meet-subsemilattice of $P$ follows immediately from the fact that $\pi_I$ is a meet-homomorphism by Lemma~\ref{lemma:meetproj}. That it is cofinal in $I$ is also clear: for any $y \in I$, we have $y \le \pi_I(x \vee y) \in \pi_I[{\uparrow} x]$.

Now let us prove (2). Clearly, $y \le \pi_I(x \vee y)$.  Moreover, since $y \in \pi_I [{\uparrow}x]$, there exists $z \ge x$ such that $\pi_I(z) = y$. In particular, since $y \le z$, we have $x \vee y \le z$ and hence $\pi_I(x \vee y) \le \pi_I(z) = y$. Overall,
\[
y \le \pi_I(x \vee y) \le \pi_I(z) = y,
\]
and we conclude $\pi_I(x \vee y) = y$.

We now show (3). The map $\pi_I$ is order-preserving. Moreover, it follows directly from (2) that $\pi_I[x \vee I] = \pi_I[{\uparrow}x]$. Let us prove that $\pi_I \upharpoonright (x \vee I)$ is injective. 

Assume that $\pi_I(x \vee y) = \pi_I(x \vee z)$ with $y,z \in I$. We show that $x \vee y = x \vee z$. The following inequalities hold
\[
x \vee y \le x \vee \pi_I(x \vee y) \le x \vee y,
\]
where the first inequality follows from $y \le \pi_I(x \vee y)$, and the second one from $\pi_I(x \vee y) \le x \vee y$. We conclude that $x \vee y = x \vee \pi_I(x \vee y)$, and similarly $x \vee z = x \vee \pi_I(x \vee z)$. Since $\pi_I(x \vee y) = \pi_I(x \vee z)$ by assumption, we get $x \vee y = x \vee z$. Hence, $\pi_I \upharpoonright (x \vee I)$ is injective. 

What is the inverse of $\pi_I \upharpoonright (x \vee I)$? By (2), it is the map $y \mapsto x \vee y$, which is clearly order-preserving. We conclude that $\pi_I \upharpoonright (x \vee I)$ is an isomorphism between $x \vee I$ and $\pi_I[{\uparrow} x]$ with their induced orderings.
\end{proof}

By Lemma~\ref{lemma:combproj}(3), the meet-subsemilattice $\pi_I[{\uparrow}x]$ and the join-subsemilattice $x\vee I$ are isomorphic with their induced orderings. We can therefore transfer the analysis of the breadth of $x\vee I$ from the previous section to projected upper cones. In particular, Theorem~\ref{thm:chain} and Lemma~\ref{lemma:combproj}(3) yield the following direct corollary.

\begin{theorem}\label{thm:chainprojection}
Let $P$ be a lower finite lattice of breadth $n$ and cardinality $\aleph_{n-1}$ for some $n > 0$. If $I \subseteq P$ is a countably infinite ideal, then there exists $U \in\mathcal{F}_I$ such that $U$ is a chain.
\end{theorem}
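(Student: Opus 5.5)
The plan is to apply Theorem~\ref{thm:chain} with $\kappa=\aleph_0$ and then transfer the conclusion to the projected cones via Lemma~\ref{lemma:combproj}(3).

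First, check the hypotheses of Theorem~\ref{thm:chain}. Since $P$ is lower finite, every principal ideal ${\downarrow}y$ is finite, so it has cardinality $<\aleph_0=\kappa$. The cardinality of $P$ is $\aleph_{n-1}=\kappa^{+(n-1)}$, and its breadth is $n$. The ideal $I$ is countable, so $|I|\le\kappa$.

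Theorem~\ref{thm:chain} then gives some $x\in P$ such that $x\vee I$ is a chain. The case $n=1$ is also covered by that theorem, and is in any event trivial because $P$ itself is a chain.

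Set $U\coloneqq\pi_I[{\uparrow}x]$. By the definition of $\mathcal{F}_I$, with $x$ as the witness, we have $U\in\mathcal{F}_I$; the lemma above shows $\mathcal{F}_I$ is a filter, since $I$ is infinite. By Lemma~\ref{lemma:combproj}(3), the restriction $\pi_I\upharpoonright(x\vee I)$ is an order isomorphism onto $U$ with the induced orderings. Since $x\vee I$ is linearly ordered, so is $U$, and hence $U$ is a chain.

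I expect no step to be a real obstacle, since the heavy lifting is done by Theorem~\ref{thm:chain}. The only point needing care is that the isomorphism in Lemma~\ref{lemma:combproj}(3) is an isomorphism of induced orders in both directions, not merely an order-preserving bijection. Comparability of $x\vee y$ and $x\vee z$ is carried to comparability of their images because $\pi_I$ is order-preserving. This direction is exactly what is needed to conclude that $U$ is a chain.
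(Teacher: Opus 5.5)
Your proposal is correct and matches the paper's argument: the paper obtains this theorem as a direct corollary of Theorem~\ref{thm:chain} with $\kappa=\aleph_0$, together with the isomorphism $x\vee I\cong\pi_I[{\uparrow}x]$ from Lemma~\ref{lemma:combproj}(3), where $x$ itself witnesses $\pi_I[{\uparrow}x]\in\mathcal{F}_I$. One small correction to your closing remark: the inverse direction of the isomorphism plays no role, since surjectivity of $\pi_I\upharpoonright(x\vee I)$ onto $U$ together with monotonicity of $\pi_I$ already transfers the chain property.
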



Finally, let us prove two consequences of Theorem~\ref{thm:chainprojection}. The first, which is employed in the proof of Theorem~\ref{thm:main}, tells us that, under the relevant hypotheses, the map $I \mapsto \mathcal{F}_I$, which associates to each countably infinite ideal its filter of projected upper cones, is order-preserving with respect to the inclusion relation on ideals and the Rudin-Blass reducibility on filters.  In particular, when $P$ is a lower finite lattice of breadth $3$ and cardinality $\aleph_2$, the following proposition, together with Lemma~\ref{lemma:preservmeager}, implies that if $\mathcal{F}_I$ is meager for some countably infinite ideal $I$, then $\mathcal{F}_J$ is meager for \emph{every} countably infinite ideal $J$. Indeed, given countably infinite ideals $I$ and $J$, the ideal generated by $I\cup J$ is countably infinite and contains both $I$ and $J$. In other words, the meagerness of the filter $\mathcal{F}_I$ is independent of the choice of the countably infinite ideal $I$.

\begin{proposition}\label{prop:rudinblass}
Let $P$ be a lower finite lattice of breadth $n > 0$ and cardinality $\aleph_{n-1}$, and let $I \subseteq J$ be two countably infinite ideals of $P$. Then, $\mathcal{F}_I \le_{RB} \mathcal{F}_J$.
\end{proposition}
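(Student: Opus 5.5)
The plan is to produce a finite-to-one map $f\colon J\to I$ witnessing $\mathcal{F}_I\le_{RB}\mathcal{F}_J$. The natural candidate is the projection $\pi_I\upharpoonright J$, because Lemma~\ref{lemma:concproj} gives $\pi_I\circ\pi_J=\pi_I$: the projected upper cones onto $J$ are then carried by $\pi_I$ onto the projected upper cones onto $I$. The catch is that $\pi_I\upharpoonright J$ need not be finite-to-one. I will fix this by using Theorem~\ref{thm:chainprojection} to find a set in $\mathcal{F}_J$ on which $\pi_I$ is finite-to-one, and by changing $f$ arbitrarily off that set.

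\textbf{Step 1 (a chain in $\mathcal{F}_J$).} Apply Theorem~\ref{thm:chainprojection} to the countably infinite ideal $J$. This gives $U\in\mathcal{F}_J$ that is a chain, and $x\in P$ with $\pi_J[{\uparrow}x]\subseteq U$. Shrinking $U$ to $\pi_J[{\uparrow}x]$, which is still a chain, I may assume $U=\pi_J[{\uparrow}x]$.

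By Lemma~\ref{lemma:combproj}(1), $U$ is cofinal in $J$. Since $P$ is lower finite and $J$ is infinite, $U$ is an infinite chain, so it has order type $\omega$. Write $U=\{u_0<u_1<\cdots\}$.

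\textbf{Step 2 ($\pi_I$ is finite-to-one on $U$).} The sequence $\pi_I(u_k)$ is nondecreasing because $\pi_I$ is order-preserving. It is cofinal in $I$: any $y\in I\subseteq J$ satisfies $y\le u_k$ for some $k$, and then $y\le\pi_I(u_k)$.

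Fix a value $v\in I$. Since $I$ is infinite and lower finite, there is $y\in I$ with $y\not\le v$. Eventually $\pi_I(u_k)\ge y$, so eventually $\pi_I(u_k)\ne v$. Hence each fiber of $\pi_I\upharpoonright U$ is a finite interval of indices.

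Now define $f$ by setting $f=\pi_I$ on $U$, and letting $f$ be any injection of $J\setminus U$ into $I$ (possible since $I$ is countably infinite). Then $f$ is finite-to-one. Because $U\in\mathcal{F}_J$, for every $W\subseteq I$ we have $f^{-1}(W)\in\mathcal{F}_J$ if and only if $U\cap\pi_I^{-1}(W)\in\mathcal{F}_J$. So the choice of $f$ off $U$ is irrelevant.

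\textbf{Step 3 (the filter correspondence).} I must show that $W\in\mathcal{F}_I$ if and only if $U\cap\pi_I^{-1}(W)\in\mathcal{F}_J$.

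For the forward direction, suppose $\pi_I[{\uparrow}y]\subseteq W$ and set $z=x\vee y$. Then $\pi_J[{\uparrow}z]\subseteq\pi_J[{\uparrow}x]=U$. For $t\ge z$, Lemma~\ref{lemma:concproj} gives $\pi_I(\pi_J(t))=\pi_I(t)\in W$. Hence $\pi_J[{\uparrow}z]\subseteq U\cap\pi_I^{-1}(W)$.

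For the reverse direction, suppose $\pi_J[{\uparrow}z]\subseteq U\cap\pi_I^{-1}(W)$. Then for every $t\ge z$ we get $\pi_I(t)=\pi_I(\pi_J(t))\in W$, so $\pi_I[{\uparrow}z]\subseteq W$ and $W\in\mathcal{F}_I$.

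The main obstacle is Step 2, namely securing finite-to-one-ness. This is exactly where the cardinality and breadth hypothesis enters, through Theorem~\ref{thm:chainprojection}. The rest is bookkeeping with Lemma~\ref{lemma:concproj}.
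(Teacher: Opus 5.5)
Your proof is correct and follows essentially the same route as the paper. You take a chain $\pi_J[{\uparrow}x]\in\mathcal{F}_J$ from Theorem~\ref{thm:chainprojection}, show $\pi_I$ is finite-to-one on it, extend to a finite-to-one map on all of $J$, and verify the filter correspondence via Lemma~\ref{lemma:concproj}. The differences are only cosmetic: the paper composes $\pi_I$ with a finite-to-one retraction $J\to D$ rather than using an injection off the chain, and it proves finite-to-oneness through cofinality and the chain property rather than an $\omega$-enumeration.
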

\begin{proof}
By Theorem~\ref{thm:chainprojection}, we can pick an $x \in P$ such that $D \coloneqq \pi_J [{\uparrow} x]$ is a chain. The set $D$ belongs to $\mathcal{F}_J$ by definition, and moreover, by Lemma~\ref{lemma:combproj}(1), $D$ is cofinal in $J$. 

Let us first show that $\pi_I \upharpoonright D$ is finite-to-one. Fix $y \in I$. We show that $\pi_I^{-1}(\{y\}) \cap D$ is finite. Since $I$ is an infinite ideal and $P$ is lower finite, there must be $z \in I$ with $y < z$. Moreover, as $D$ is cofinal in $J$  and $z  \in I \subseteq J$, there must be $u \in D$ with $z \le u$. Clearly, $\pi_I [{\uparrow} u] \subseteq {\uparrow} z$, and therefore $y \not\in \pi_I [{\uparrow} u]$. Furthermore, since $D$ is a chain, we have
\[
D = (D \cap ({\uparrow} u)) \cup (D \cap ({\downarrow} u)).
\] 
We conclude that $\pi_I^{-1}(\{y\}) \cap D \subseteq {\downarrow} u$, which is finite. Hence, $\pi_I \upharpoonright D$ is finite-to-one.

Since $D$ is an infinite subset of the countable set $J$, we can fix a finite-to-one map $f:J \rightarrow D$ such that $f \upharpoonright D$ is the identity. Let $\theta \coloneqq \pi_I \circ f$. Since both $f$ and $\pi_I \upharpoonright D$ are finite-to-one, so is $\theta$. Now pick some $U \subseteq I$ and let us show that $U \in \mathcal{F}_I$ if and only if $\theta^{-1}(U) \in \mathcal{F}_J$.

First suppose that $U \in \mathcal{F}_I$. We show that $\theta^{-1}(U) \in \mathcal{F}_J$. By definition, there exists some $y \in P$ such that $U \supseteq \pi_I[{\uparrow} y]$. Then, the following holds:
\begin{align*}
\theta^{-1}(U) &\supseteq \theta^{-1}(\pi_I[{\uparrow} y])\\
&= f^{-1}(\pi_I^{-1}(\pi_I [{\uparrow} y]))\\
&\supseteq f^{-1}(\pi_I^{-1}(\pi_I [{\uparrow} y]) \cap D)\\
&\supseteq \pi_I^{-1}(\pi_I [{\uparrow} y]) \cap D\\
&= \pi_I^{-1}(\pi_I \circ \pi_J[{\uparrow} y]) \cap D\\
&\supseteq \pi_J[{\uparrow} y] \cap D \in \mathcal{F}_J,
\end{align*}
where the third inclusion follows from $f \upharpoonright D$ being the identity and the second equality follows from Lemma~\ref{lemma:concproj}. In particular, we conclude that $\theta^{-1}(U) \in \mathcal{F}_J$.

Now suppose that $\theta^{-1}(U) \in \mathcal{F}_J$. We show that $U \in \mathcal{F}_I$. By definition, there must be some $y \in P$ such that $\theta^{-1}(U) \supseteq \pi_J[{\uparrow} y]$. Replacing $y$ by $x \vee y$ if necessary, we may assume that $ \pi_J[{\uparrow} y] \subseteq D$. Then,
\begin{align*}
U &\supseteq \theta \circ \pi_J[{\uparrow} y]\\
&= \pi_I \circ f \circ \pi_J[{\uparrow} y]\\
&= \pi_I \circ \pi_J[{\uparrow} y]\\
&= \pi_I[{\uparrow} y] \in \mathcal{F}_I,
\end{align*}
where the second equality follows from $f \upharpoonright D$ being the identity and $D \supseteq  \pi_J[{\uparrow} y]$, while the last equality follows from Lemma~\ref{lemma:concproj}. In particular, we conclude that $U \in \mathcal{F}_I$. Overall, $\theta$ witnesses $\mathcal{F}_I \le_{RB} \mathcal{F}_J$.
\end{proof}

The second and final result of this section shows that, under the relevant cardinality hypotheses, the filters of projected upper cones on countably infinite ideals are all $\mathsf{P}$-filters. Recall that a filter $\mathcal{F}$ is a $\mathsf{P}$-filter if, for every countable collection $\{U_n : n\in\omega\} \subseteq \mathcal{F}$, there exists $U \in \mathcal{F}$ such that $U_n \supseteq^* U$ for every $n$. Although the next result establishes an important combinatorial feature of these filters, it will not be employed in the proof of Theorem~\ref{thm:main}.

\begin{proposition}
Let $P$ be a lower finite lattice of breadth $n > 0$ and cardinality $\aleph_{n-1}$. Then, $\mathcal{F}_I$ is a $\mathsf{P}$-filter for every countably infinite ideal $I \subseteq P$.
\end{proposition}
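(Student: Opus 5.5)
We reduce to a cofinal chain and then use lower finiteness to take a countable ``diagonal'' upper bound. Fix a countably infinite ideal $I$ and a countable family $\{U_k : k\in\omega\}\subseteq\mathcal{F}_I$. For each $k$ pick $y_k\in P$ with $\pi_I[{\uparrow}y_k]\subseteq U_k$. By Theorem~\ref{thm:chainprojection}, pick $x\in P$ such that $D\coloneqq\pi_I[{\uparrow}x]$ is a chain. By Lemma~\ref{lemma:combproj}(1), $D$ is cofinal in $I$. Replacing each $y_k$ by $x\vee y_k$, we may assume $y_k\ge x$, so that $D_k\coloneqq\pi_I[{\uparrow}y_k]\subseteq D$.

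\textbf{Each $D_k$ is an upper segment of $D$ up to finitely many points.} Since $D$ is infinite, cofinal in the countable infinite ideal $I$, lower finite and a chain, it is order-isomorphic to $\omega$. Each $D_k$ is cofinal in $I$, hence an infinite subset of $D$. We want $D\setminus D_k$ to be ``small'' in a controlled way. The direct approach is to find a single $z\in P$ with $z\ge y_k$ for all $k$. This is impossible in general, since $P$ is not countably directed in a way we control. Instead, we aim for $U\in\mathcal{F}_I$ with $U\subseteq^* D_k$ for each $k$.

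\textbf{Key step.} Consider the countable ideal $J$ generated by $I\cup\{y_k:k\in\omega\}\cup\{x\}$. Apply Theorem~\ref{thm:chainprojection} to $J$ to get $w\in P$ with $E\coloneqq\pi_J[{\uparrow}w]$ a chain, cofinal in $J$. We may assume $w\ge x$. Since $E$ is cofinal in $J$ and $y_k\in J$, for each $k$ there is $e_k\in E$ with $e_k\ge y_k$. Because $E$ is a chain isomorphic to $\omega$, the part $E\setminus{\uparrow}e_k$ is finite. For $e\in E\cap{\uparrow}e_k$ we have $e\ge y_k$, so $\pi_I(e)\in\pi_I[{\uparrow}y_k]=D_k$.

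Now set $U\coloneqq\pi_I[{\uparrow}w]$, which lies in $\mathcal{F}_I$. By Lemma~\ref{lemma:concproj}, $U=\pi_I[\pi_J[{\uparrow}w]]=\pi_I[E]$. Hence
\[
U\setminus D_k\subseteq \pi_I[E\setminus{\uparrow}e_k],
\]
which is finite, so $U\subseteq^* D_k\subseteq U_k$ for every $k$. This shows $\mathcal{F}_I$ is a $\mathsf{P}$-filter.

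\textbf{Main obstacle.} The point to check carefully is the inclusion $\pi_I[E\cap{\uparrow}e_k]\subseteq D_k$. It holds because $e\ge e_k\ge y_k$ gives $e\in{\uparrow}y_k$, so $\pi_I(e)\in\pi_I[{\uparrow}y_k]$. One also needs $J$ to be a countably infinite ideal, which holds since $P$ is lower finite and $J$ is generated by countably many elements. With these checked, the argument goes through. Note that the initial reduction via $D$ is in fact unnecessary: the chain $E$ in the larger ideal $J$ already does all the work.
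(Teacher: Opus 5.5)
Your argument is correct, and it takes a different route from the paper's.

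\textbf{The paper's route.} The paper works in the quotient $P/I$. By Corollary~\ref{cor:quotient} this quotient has breadth at most $n-1$, and the paper then cites \cite[Proposition 11]{MR4993406}: such a semilattice of size $\aleph_{n-1}$ with countable principal ideals is $\sigma$-directed. This yields a single $y$ with $[x_k]_I\le[y]_I$ for all $k$. The paper then intersects with a chain $\pi_I[{\uparrow}x]$ for the ideal $I$ itself and shows $\pi_I[{\uparrow}(x\vee y)]\subseteq^* U_k$.

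\textbf{Your route.} You absorb all the witnesses $y_k$ into a larger countable ideal $J\supseteq I$. You take the chain $E=\pi_J[{\uparrow}w]$ that Theorem~\ref{thm:chainprojection} provides for $J$, and push it down to $I$ with Lemma~\ref{lemma:concproj}. Since $E$ is a cofinal chain in the lower finite $J$, all but finitely many $e\in E$ lie above $y_k$. Hence
\[
\pi_I[{\uparrow}w]=\pi_I[E]\subseteq^*\pi_I[{\uparrow}y_k].
\]
This is the same mechanism as in the proof of Proposition~\ref{prop:rudinblass}.

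\textbf{Comparison.} Your route is self-contained. It needs neither the quotient $P/I$, nor Corollary~\ref{cor:quotient}, nor the cited $\sigma$-directedness result, and it treats $n=1$ uniformly. It uses only that every countably infinite ideal $J\supseteq I$ has a chain in $\mathcal{F}_J$, so it proves the $\mathsf{P}$-filter property for any lower finite lattice with that property. The paper's route instead ties the result to the quotient analysis of Section~\ref{sec:quotient}. It also exhibits the pseudo-intersection explicitly, built from the fixed chain for $I$ and a single common upper bound modulo $I$.

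\textbf{Trimming.} As you note yourself, several parts of your write-up are unnecessary and can be cut: the preliminary chain $D$, the replacement $y_k\mapsto x\vee y_k$, the assumption $w\ge x$, and the discussion of the $D_k$ as near-upper-segments of $D$.
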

\begin{proof}
If $n = 1$, note that $P$ must be isomorphic to $\omega$ with its usual ordering. The only infinite ideal of $\omega$ is $\omega$ itself, and $\mathcal{F}_\omega = \mathfrak{Fr}$, which is trivially a $\mathsf{P}$-filter. So we can assume $n > 1$.

Pick a countably infinite ideal $I \subseteq P$ and a countable family $\{U_k : k\in\omega\} \subseteq \mathcal{F}_I$. By definition of $\mathcal{F}_I$, for each $k$ we can pick $x_k \in P$ such that $U_k \supseteq \pi_I[{\uparrow} x_k]$. Arguing as in the proof of Theorem~\ref{thm:chain}, we have that the quotient $P/I$ is a join-semilattice of cardinality $\aleph_{n-1}$ whose principal ideals are countable. Moreover, as $P$ is lower finite and $I$ is infinite, $I$ is unbounded, and we conclude by Corollary~\ref{cor:quotient} that $P/I$ has breadth at most $n-1$. 

By \cite[Proposition 11]{MR4993406}, every join-semilattice of breadth (at most) $n-1$ and cardinality $\aleph_{n-1}$ whose principal ideals are countable is $\sigma$-directed---i.e., every countable subset has an upper bound. Thus, $P/I$ is $\sigma$-directed, and we can pick $y \in P$ such that $[x_k]_I \le [y]_I$ for every $k$. 

By Theorem~\ref{thm:chainprojection}, there exists $x \in P$ such that $\pi_I[{\uparrow} x]$ is a chain. Consider the element $x \vee y$. We claim that $\pi_I[{\uparrow} (x \vee y)] \subseteq^* U_k$ for every $k$. Since $\pi_I[{\uparrow} (x \vee y)]$ belongs to $\mathcal{F}_I$, this claim will complete the proof.

Fix $k\in\omega$. We show that $\pi_I[{\uparrow} (x \vee y)] \subseteq^* U_k$. As $[x_k]_I \le [y]_I$, there exists $a \in I$ such that $x_k \le a \vee y$. Thus, 
\[
({\uparrow}a) \cap \pi_I[{\uparrow} y] = \pi_I[{\uparrow} (a \vee y)] \subseteq \pi_I[{\uparrow} x_k] \subseteq U_k.
\]

By Lemma~\ref{lemma:combproj}(1), $\pi_I[{\uparrow} (x \vee y)]$ is cofinal in $I$. Pick $u \in \pi_I[{\uparrow} (x \vee y)]$ with $a \le u$. Since $\pi_I[{\uparrow} (x \vee y)] \subseteq \pi_I[{\uparrow} x]$, the set $\pi_I[{\uparrow} (x \vee y)]$ is also a chain. Thus,
\[
\pi_I[{\uparrow} (x \vee y)] \setminus ({\uparrow} a) \subseteq \pi_I[{\uparrow} (x \vee y)] \cap ({\downarrow} u),
\] 
which is finite, as $P$ is lower finite. Overall, we have
\[
 \pi_I[{\uparrow} (x \vee y)] =^* ({\uparrow}a) \cap \pi_I[{\uparrow} (x \vee y)] \subseteq ({\uparrow}a) \cap \pi_I[{\uparrow} y] \subseteq U_k,
\] 
and we conclude $\pi_I[{\uparrow} (x \vee y)] \subseteq^* U_k$.
\end{proof}

\section{The main result}\label{sec:main}

This section is devoted to the proof of the following theorem. By combining it with Corollary~\ref{cor:induction}, we get Theorem~\ref{thm:main}. We refer the reader to Section~\ref{sec:prel:set} for the relevant set-theoretic terminology and for the basics of Lévy's collapse forcing $\mathrm{Coll}(\omega_1, {<}\kappa)$. Familiarity with standard forcing terminology and techniques is assumed throughout the section.

\begin{theorem}\label{thm:maind}
If $\kappa$ is a Mahlo cardinal and $G$ is a $\mathsf{V}$-generic filter for $\mathrm{Coll}(\omega_1, {<}\kappa)$, then in $\mathsf{V}[G]$ there are no lower finite lattices of breadth $3$ and cardinality $\aleph_2$.
\end{theorem}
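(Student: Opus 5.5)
The plan is to argue by contradiction, using reflection from the Mahlo cardinal and a genericity argument over an intermediate model. Suppose that in $\mathsf{V}[G]$ there is a lower finite lattice $P$ of breadth $3$ and cardinality $\aleph_2=\kappa$. We may take its underlying set to be $\kappa$. Fix a $\mathrm{Coll}(\omega_1,{<}\kappa)$-name $\dot P$ for it. By the $\kappa$-cc (Lemma~\ref{lemma:collapse}(1)), the set $C$ of $\lambda<\kappa$ with the following two properties is club in $\kappa$:
\begin{enumerate}
\item $\dot P\cap\lambda$ is a $\mathrm{Coll}(\omega_1,{<}\lambda)$-name;
\item $P\cap\lambda$ is closed under $\vee$ and $\wedge$, and is downward closed.
\end{enumerate}
Downward closure is available because $P$ is lower finite. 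Since $\kappa$ is Mahlo, we can pick an inaccessible $\lambda\in C$.

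Write $G_\lambda=G\cap\mathrm{Coll}(\omega_1,{<}\lambda)$ and $P_\lambda=P\cap\lambda$. In $\mathsf{V}[G_\lambda]$ we have $\lambda=\omega_2$, so $P_\lambda$ is there a lower finite lattice of breadth $3$ and cardinality $\aleph_2$. Moreover, by Lemma~\ref{lemma:collapse}(3), $\mathsf{V}[G]$ is a generic extension of $\mathsf{V}[G_\lambda]$ by the countably closed tail collapse $\mathrm{Coll}(\omega_1,[\lambda,\kappa))$. In $\mathsf{V}[G]$, $P_\lambda$ is a proper ideal of $P$.

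The key objects are the filters $\mathcal{F}_I$ of projected upper cones, for a fixed countably infinite ideal $I\subseteq P_\lambda$ lying in $\mathsf{V}[G_\lambda]$. There are two such filters, $\mathcal{F}_I^{P_\lambda}$ computed in $\mathsf{V}[G_\lambda]$ and $\mathcal{F}_I^{P}$ computed in $\mathsf{V}[G]$.
\begin{itemize}
\item By Proposition~\ref{prop:rudinblass} and Lemma~\ref{lemma:preservmeager}, meagerness of each of these filters is independent of the choice of $I$.
\item Meagerness is witnessed by a real, via Lemma~\ref{lemma:characterizationmeager}(3). The tail forcing adds no reals, so $\mathcal{F}_I^{P_\lambda}$ is meager in $\mathsf{V}[G_\lambda]$ if and only if it is meager in $\mathsf{V}[G]$.
\end{itemize}
Next, fix a new element $x\in P\setminus P_\lambda$ such that $\pi_I[{\uparrow}x]$ is a chain, which Theorem~\ref{thm:chainprojection} provides. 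Every element of $\mathcal{F}_I^{P_\lambda}$ contains some $\pi_I[{\uparrow}y]$ with $y\in P_\lambda$, and $\pi_I[{\uparrow}x]\cap\pi_I[{\uparrow}y]\supseteq\pi_I[{\uparrow}(x\vee y)]$ is infinite. Hence $\pi_I[{\uparrow}x]$ is a new set, lying in $\mathsf{V}[G]$, that is positive for $\mathcal{F}_I^{P_\lambda}$ and in fact diagonalizes it in a strong sense.

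I then split into two cases according to whether these filters are meager.

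\emph{Nonmeager case.} Here the aim is to show that countably closed forcing cannot produce such a set. Fix a name $\dot x$ and a condition $p$. Build a descending sequence of conditions $p_k$ in the tail collapse: at stage $k$, given the current play $n_0,\dots,n_k$, strengthen $p_k$ so that it decides enough of $\pi_I[{\uparrow}\dot x]$ above $n_k$ to guarantee a new element. The chain structure and the lower finiteness should let a condition decide a finite segment. This translates into a strategy for Player~II in Laflamme's game $\mathcal{G}(\mathcal{F}_I^{P_\lambda})$ played in $\mathsf{V}[G_\lambda]$. This is the auxiliary game announced before Lemma~\ref{lemma:game}. Countable closure (Lemma~\ref{lemma:collapse}(2)) gives a lower bound for each play. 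Since the resulting union contains a final segment of a set lying in $\mathcal{F}_I^{P}$, the strategy should be winning. By Theorem~\ref{thm:laflamme} this makes the filter meager, a contradiction.

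\emph{Meager case.} Here I would use the reflection point differently. A meager witness $\langle A_n\rangle$ for $\mathcal{F}_I^{P}$ lies in $\mathsf{V}[G_\lambda]$. Choose $x$ generic over $\mathsf{V}[G_\lambda]$ relative to the countably many $A_n$ by a density argument in the closed tail forcing. Then $\pi_I[{\uparrow}x]$ should be forced to miss infinitely many $A_n$, contradicting that it lies in $\mathcal{F}_I^P$.

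The main obstacle is making the "deciding finite pieces of $\pi_I[{\uparrow}\dot x]$" step precise in both cases. In particular, it requires showing that conditions of the tail collapse can be strengthened to add arbitrarily large, or arbitrarily sparse, finite chunks of the projected cone. This will need the breadth-$2$ structure of $x\vee P_\lambda$ (Lemma~\ref{lemma:basicproj}) together with the inaccessibility of $\lambda$. The first thing I would try is a fusion argument: use $\omega_1$-many mutually generic candidates for $x$, and apply the chain property to compare their projections.
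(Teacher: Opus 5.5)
Your overall architecture matches the paper's: Mahlo reflection to an inaccessible $\lambda$ with $P\cap\lambda\in\mathsf{V}[G_\lambda]$, the countably closed tail, the filters $\mathcal{F}_I$, a density argument against meagerness, and Laflamme's game against non-meagerness. The genuine gap is the step you defer as ``the main obstacle'', which is the heart of the proof, and you are also manipulating the wrong object.

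\emph{The wrong object.} Since $I\in\mathsf{V}[G_\lambda]$ is countable and the tail adds no reals, $\pi_I[{\uparrow}x]$ already lies in $\mathsf{V}[G_\lambda]$, so it is not new. Your argument only shows it is $\mathcal{F}_I^{P_\lambda}$-positive, which is automatic from $\mathcal{F}_I^{P_\lambda}\subseteq\mathcal{F}_I^{P}$. The genuinely new object is $C=\pi_\lambda[P\uparrow x]$, a cofinal meet-subsemilattice of $P_\lambda$ of breadth $2$.

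\emph{The missing idea.} By the second inequality of Theorem~\ref{thm:ditor}, the proper ideals of $C$ are countable. Hence $C\cap J$ is not cofinal in $J$ for any uncountable proper ideal $J\in\mathsf{V}[G_\lambda]$ (Lemma~\ref{lemma:covering}). An elementary submodel of size $\aleph_1$ containing $\omega_1$ turns this into Proposition~\ref{prop:covering}: for every condition $q$ there is a threshold $y$ such that, for each finite $F\subseteq{\uparrow}y$, some extension of $q$ forces $\dot C\cap F=\emptyset$. This is exactly the ``arbitrarily sparse'' control you need. It comes from breadth $2$ plus Ditor, not from a fusion over $\omega_1$ candidates.

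\emph{Transferring back to $I$.} The covering property controls $C$, not $\pi_I[{\uparrow}x]=\pi_I[C]$. The latter meets $F$ iff $C$ meets $\pi_I^{-1}(F)$, which need not be small. To transfer, take $I=\lambda\cap M$ for a countable $M\prec H(\theta)$ containing the name and the condition. Build the descending conditions inside $M$ so that $\dot C\cap I$ is forced to be cofinal in $I$. Then meet-closure gives $\pi_I(z)=z\wedge b\in C$ for $z\in C$ and suitable $b\in C\cap I$, so $\pi_I[{\uparrow}x]\subseteq C$. This is why the paper's density argument fixes the target ideal $\omega$ but lets $I$ depend on the condition. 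It also uses a chain $U\in\mathcal{F}_I$ so that the sets $U\cap\pi_\omega^{-1}(A_k)$ are finite.

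\emph{The game step.} Likewise, in the game the conditions built by Player II must force $\dot C$ to be empty on the gaps $[m_{k-1},n_k)$ chosen by Player I, while keeping $\dot C\cap I$ cofinal. It is not about guaranteeing a new element in $[n_k,m_k)$.

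\emph{The case split is not exhaustive.} Your non-meager case needs $\mathcal{F}_I^{P_\lambda}$ to be non-meager in $\mathsf{V}[G_\lambda]$, since that is the filter whose Laflamme game lives there. Your meager case assumes $\mathcal{F}_I^{P}$ is meager. Since $\mathcal{F}_I^{P_\lambda}\subseteq\mathcal{F}_I^{P}$, the case ``$\mathcal{F}_I^{P_\lambda}$ meager, $\mathcal{F}_I^{P}$ non-meager'' is left open. Nothing in your argument rules it out: the density argument only gives non-meagerness of the filter computed from the full $P$, not of the smaller filter at an arbitrarily chosen $\lambda$.

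The paper bridges this with a second reflection. After proving $\mathcal{F}_\omega^P$ non-meager in $\mathsf{V}[G]$ (Proposition~\ref{prop:nonmeager}), it uses $\mathsf{CH}$ in $\mathsf{V}[G]$ to find $\alpha<\kappa$ with the following property: for every sequence $\langle A_k : k\in\omega\rangle$ there is $x\in\alpha$ with $\pi_\omega[P\uparrow x]\cap A_k=\emptyset$ for infinitely many $k$. It then chooses a new inaccessible reflection point $\mu\ge\alpha$, so that $\mathcal{F}_\omega^{P_\mu}$ is non-meager in $\mathsf{V}[G\upharpoonright\mu]$ (Lemma~\ref{lemma:abs}). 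Only at $\mu$ is the game argument run.
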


First, let us prove the following technical proposition.

\begin{proposition}\label{prop:covering}
Let $P$ be a lower finite lattice of cardinality $\ge \aleph_2$ and let $\mathbb{P}$ be a forcing notion that preserves $\omega_1$. If $\dot{C}$ is a $\mathbb{P}$-name such that 
\begin{multline*}
\Vdash \dot{C} \text{ is a cofinal subset of } P \text{ and } \dot{C} \text{ is a join-semilattice of breadth } 2 \\\text{in the induced ordering},
\end{multline*}
then, for every $p \in \mathbb{P}$, there is $x \in P$ such that for every $F \in [{\uparrow} x]^{\le \aleph_1}$, $p \not\Vdash \dot{C} \cap F \neq \emptyset$.
\end{proposition}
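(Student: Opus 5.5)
We argue by contradiction, combining a closure argument in $\mathsf{V}$ with the second clause of Ditor's Theorem~\ref{thm:ditor} applied in the extension. Suppose the conclusion fails for some $p\in\mathbb{P}$. Then for every $x\in P$ we can choose, in $\mathsf{V}$, a set $F_x\in[{\uparrow}x]^{\le\aleph_1}$ with $p\Vdash \dot C\cap F_x\neq\emptyset$.

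\emph{Step 1: a closed set in $\mathsf{V}$.} By a standard closing-off argument, fix $M\subseteq P$ with $|M|=\aleph_1$ such that $M$ is closed under binary joins and $F_x\subseteq M$ for all $x\in M$. Let $\overline{M}$ be the downward closure of $M$. Since $P$ is lower finite, $|\overline{M}|=\aleph_1<|P|$, so we can pick $y\in P\setminus\overline{M}$.

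\emph{Step 2: an ideal of $C$ in the extension.} Let $G$ be $\mathsf{V}$-generic with $p\in G$, and let $C=\dot C^G$. Put
\[
I\coloneqq\{c\in C : c\le m \text{ for some } m\in M\}.
\]
This set is downward closed in $C$. It is also closed under the join $\vee_C$ of $C$. Indeed, if $c_1\le m_1$ and $c_2\le m_2$ with $m_i\in M$, then $m_1\vee m_2\in M$. Moreover, $C\cap F_{m_1\vee m_2}$ contains some $c$, which lies in $M$ and satisfies $c\ge m_1\vee m_2\ge c_1,c_2$. Hence $c_1\vee_C c_2\le c$. So $I$ is an ideal of $C$.

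The ideal $I$ is proper. Since $C$ is cofinal in $P$, some $c^*\in C$ lies above $y$. Then $c^*\notin I$, because otherwise $y\le c^*\le m\in M$, which would give $y\in\overline{M}$.

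By the same $F_m$ argument, $C\cap M\subseteq I$ and $C\cap M$ is cofinal in $M$.

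\emph{Step 3: the contradiction.} In $\mathsf{V}[G]$, $C$ is a join-semilattice of breadth $2$. Its principal ideals are finite, because any $c'\le_C c$ satisfies $c'\le c$ in $P$, and $P$ is lower finite. By Theorem~\ref{thm:ditor}(b) with $\kappa=\aleph_0$ and $n=2$, every proper ideal of $C$ has cardinality $<\aleph_1$. In particular, $I$ is countable.

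On the other hand, $M$ is lower finite with $|M|=\aleph_1$, and $\omega_1$ is preserved. Every element of $M$ lies below some element of the cofinal set $C\cap M$, and each element of $C\cap M$ has only finitely many elements of $M$ below it. Hence $M\subseteq\bigcup_{c\in C\cap M}{\downarrow}c$, which forces $|C\cap M|=\aleph_1$ in $\mathsf{V}[G]$. This contradicts $C\cap M\subseteq I$ with $I$ countable.

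\emph{Main obstacle.} The delicate point is Step 2: checking that $I$ is closed under the join computed in $C$, which may differ from the join in $P$. This is exactly why $M$ must be closed under the witnesses $F_x$ and not merely under the joins of $P$.
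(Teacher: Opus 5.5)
Your proof is correct and is essentially the paper's argument. The paper also assumes the witnesses $F_x$ exist and closes off to a set of size $\aleph_1$, using an elementary submodel $M\prec H(\theta)$ with $\omega_1\subseteq M$ so that $P\cap M$ is already an ideal of $P$ containing $F_x$ for each of its elements. It then shows, in the extension, that $\dot C^G$ intersected with this ideal is a proper ideal of $\dot C^G$ that is cofinal in an uncountable ground-model set, contradicting Ditor's Theorem~\ref{thm:ditor}(b). Your explicit closure under joins and under the $F_x$, together with the downward closure $\overline{M}$, is a hands-on substitute for the elementary submodel. Your check that $I$ is closed under $\vee_C$ corresponds to the paper's observation that $C\cap I$ is directed and downward closed in $C$.
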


The proof uses the following auxiliary lemma.

\begin{lemma}\label{lemma:covering}
Let $\mathsf{V \subseteq W}$ be transitive class models of  $\mathsf{ZFC}$ with $\omega_1^{\mathsf{V}} = \omega_1^\mathsf{W}$, and let $P \in \mathsf{V}$ be an uncountable lower finite lattice. Suppose that $C \in \mathsf{W}$ is a cofinal subset of $P$ and $C$ is a join-semilattice of breadth $2$ in the induced ordering. Then, for every uncountable proper ideal $I  \in \mathsf{V}$ of $P$, $C \cap I$ is not cofinal in $I$.
\end{lemma}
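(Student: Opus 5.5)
The plan is to argue by contradiction inside $\mathsf{W}$: assuming $C\cap I$ is cofinal in $I$, I will show that $C\cap I$ is an unbounded proper ideal of the breadth-$2$ join-semilattice $C$. Then Lemma~\ref{lemma:basicproj}, applied in $C$, makes $I$ countable in $\mathsf{W}$, which contradicts $\omega_1^{\mathsf{V}}=\omega_1^{\mathsf{W}}$.

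Throughout, write $\vee_C$ for the join of $C$ in its induced ordering. For $a,b\in C$ we only know $a\vee b\le a\vee_C b$. Note that $C$ is lower finite, since $C\downarrow c\subseteq P\downarrow c$, which is finite.

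\textbf{Step 1: $J\coloneqq C\cap I$ is an ideal of $C$.} Downward closure in $C$ is inherited from $I$ being downward closed in $P$. For closure under $\vee_C$, take $a,b\in J$. Then $a\vee b\in I$, and by the cofinality assumption there is $d\in C\cap I$ with $a\vee b\le d$. Since $d$ is an upper bound of $a,b$ in $C$, we get $a\vee_C b\le d$, so $a\vee_C b\in I$.

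\textbf{Step 2: $J$ is proper and unbounded in $C$.} For properness, pick $p\in P\setminus I$; by cofinality of $C$ in $P$ there is $c\in C$ with $c\ge p$, and then $c\notin I$. For unboundedness, suppose some $c\in C$ bounds $J$. Since $J$ is cofinal in $I$, $c$ bounds all of $I$, so $I\subseteq P\downarrow c$ is finite. This contradicts $I$ being uncountable.

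\textbf{Step 3: conclude.} Fix $c\in C\setminus J$. By Lemma~\ref{lemma:basicproj} applied in $\mathsf{W}$ to $C$, which has breadth $2$, the set $c\vee_C J$ has breadth at most $1$, i.e.\ it is a chain. Being a lower finite chain, it is countable (of order type at most $\omega$). Every element of $J$ lies below some element of $c\vee_C J$, and each principal ideal is finite, so $J$ is countable in $\mathsf{W}$. Finally, $I\subseteq\bigcup_{d\in J}P\downarrow d$ by cofinality of $J$ in $I$, so $I$ is countable in $\mathsf{W}$. But $I\in\mathsf{V}$ is uncountable there, and $\omega_1^{\mathsf{V}}=\omega_1^{\mathsf{W}}$ means an uncountable set of $\mathsf{V}$ remains uncountable in $\mathsf{W}$. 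This contradiction proves the lemma.

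The main subtlety is Step 1: the join of $C$ need not agree with the join of $P$. The cofinality assumption is exactly what makes $C\cap I$ closed under $\vee_C$, which is what allows Lemma~\ref{lemma:basicproj} to be applied inside $C$ rather than in $P$.
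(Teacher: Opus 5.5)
Your proof is correct and follows the paper's argument. You show that $C\cap I$ is a proper ideal of $C$, using cofinality to get closure under the join of $C$; you then conclude that it is countable in $\mathsf{W}$ and transfer countability to $I$, which contradicts $\omega_1^{\mathsf{V}}=\omega_1^{\mathsf{W}}$. The only difference is that the paper cites the second inequality of Ditor's Theorem~\ref{thm:ditor} for countability of proper ideals in a lower finite breadth-$2$ join-semilattice, whereas you derive it directly from Lemma~\ref{lemma:basicproj} via the chain $c\vee_C J$; your unboundedness check in Step~2 is never used.
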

\begin{proof}
Suppose, toward a contradiction, that there is an uncountable proper ideal $I \in \mathsf{V}$ of $P$ such that $I \cap C$ is cofinal in $I$. In particular, $C \cap I$ is a directed subset of $C$---i.e., any two elements of $C \cap I$ have an upper bound in $C \cap I$. Moreover, as $I$ is downward closed, $C \cap I$ is also a downward closed subset of $C$. Overall, $C \cap I$, being a downward closed and directed subset of $C$, is an ideal of $C$ with respect to the induced ordering.

Actually, $C \cap I$ is a \emph{proper} ideal of $C$. Indeed, $I$ is  a proper ideal of $P$, and therefore $C \setminus I \neq \emptyset$, since $C$ is a cofinal subset of $P$.

By the second inequality of Ditor's Theorem~\ref{thm:ditor}, every proper ideal of a lower finite join-semilattice of breadth $2$ must be countable. Hence, by the preceding paragraph, $C \cap I$ must be countable in $\mathsf{W}$. Now, since $C \cap I$ is cofinal in $I$, we must have $|C \cap I|^\mathsf{W} = |I|^\mathsf{W}$---in general, every cofinal subset of a lower finite infinite poset must have the same cardinality as the whole poset. Thus, $|I|^\mathsf{W} = \aleph_0$, which is a contradiction, as $|I|^\mathsf{V} \ge \omega_1^\mathsf{V} = \omega_1^\mathsf{W}$. 
\end{proof}

\begin{proof}[Proof of \upshape Proposition~\ref{prop:covering}]
Suppose, toward a contradiction, that there exists a $p \in \mathbb{P}$ such that for every $x \in P$, there is an $F \in [P]^{\le \aleph_1}$ with $F \subseteq {\uparrow}x$ and $p \Vdash \dot{C} \cap F \neq \emptyset$.

Let $M \prec H(\theta)$ be an elementary submodel, where $\theta$ is a sufficiently large regular cardinal, such that $|M| = \aleph_1$, $\omega_1 \subseteq M$ and $P, \mathbb{P}, p, \dot{C} \in M$. Let $I = P \cap M$. It easily follows from $P$ being lower finite and from the elementarity of $M$ that $I$ is a proper ideal of $P$ of cardinality $\aleph_1$. We claim that
\begin{equation}\label{eq:covering}
p \Vdash \dot{C} \cap I \text{ is cofinal in }I.
\end{equation}
Pick some $x \in I$. By hypothesis, there exists $F \in [P]^{\le \aleph_1}$ with $F \subseteq {\uparrow}x$ and $p \Vdash \dot{C} \cap F \neq \emptyset$. By elementarity of $M$, such an $F$ can be found in $M$. But from $F \in M$ and $\omega_1 \subseteq M$, it follows that $F \subseteq I$. In particular, $p \Vdash ({\uparrow}x) \cap I \cap \dot{C} \neq \emptyset$. Hence, \eqref{eq:covering} holds. 

Let $G$ be a $\mathsf{V}$-generic filter for $\mathbb{P}$ with $p \in G$. Then, $\dot{C}_{G}$---i.e., the interpretation of the name $\dot{C}$ by $G$---is a cofinal subset of $P$ that, in its induced ordering, is a join-semilattice of breadth $2$.

Because $\mathbb{P}$ preserves $\omega_1$, we have $\omega_1^{\mathsf{V}} = \omega_1^{\mathsf{V}[G]}$. Thus, all the hypotheses of Lemma~\ref{lemma:covering} are satisfied by the models $\mathsf{V} \subseteq \mathsf{V}[G]$, the lattice $P$, the ideal $I$, and the set $\dot{C}_G$. Consequently, $\dot{C}_G \cap I$ is not cofinal in $I$, contradicting \eqref{eq:covering}.
\end{proof}

We now prove Theorem~\ref{thm:maind} by contradiction. Suppose that $\kappa$ is a Mahlo cardinal, $G$ is a $\mathsf{V}$-generic filter for $\mathrm{Coll}(\omega_1,{<}\kappa)$, and in $\mathsf{V}[G]$ there exists a lower finite lattice $P$ of breadth $3$ and cardinality $\aleph_2$. The remainder of this section is devoted to deriving a contradiction from this assumption.

Since $\kappa=\omega_2^{\mathsf{V}[G]}$, we may assume without loss of generality that $P=(\kappa,\preceq)$, and, by permuting $\kappa$ if necessary, that $\omega$ is an ideal of $P$. We use $\preceq$, rather than $\le$, to distinguish this lattice ordering from the usual well-ordering of the ordinal $\kappa$. We begin by proving that in $\mathsf{V}[G]$ the filter of projected upper cones onto $\omega$ is non-meager.

\begin{proposition}\label{prop:nonmeager}
In $\mathsf{V}[G]$, $\mathcal{F}_\omega^P$ is non-meager.
\end{proposition}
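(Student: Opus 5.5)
The plan is to argue by contradiction, using Laflamme's game (Theorem~\ref{thm:laflamme}) together with a reflection argument at an inaccessible $\lambda<\kappa$, and then Proposition~\ref{prop:covering} applied to the tail of the collapse.

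\textbf{Step 1: a winning strategy reflects to an inaccessible $\lambda$.} Suppose that in $\mathsf{V}[G]$ the filter $\mathcal{F}_\omega^P$ is meager. By Theorem~\ref{thm:laflamme}, Player II has a winning strategy $\sigma$ in $\mathcal{G}(\mathcal{F}_\omega^P)$. The strategy $\sigma$ is essentially a real, so by the $\kappa$-cc it lies in $\mathsf{V}[G\cap \mathrm{Coll}(\omega_1,{<}\alpha)]$ for some $\alpha<\kappa$. Using that $\kappa$ is Mahlo, I take a club of $\lambda<\kappa$ at which a name for $\preceq$ reflects. This club contains an inaccessible $\lambda>\alpha$ such that:
\begin{itemize}
\item $P_\lambda\coloneqq(\lambda,\preceq\upharpoonright\lambda)$ belongs to $\mathsf{V}[G_\lambda]$, where $G_\lambda=G\cap\mathrm{Coll}(\omega_1,{<}\lambda)$;
\item $P_\lambda$ is an ideal (hence a sublattice) of $P$;
\item $\sigma\in\mathsf{V}[G_\lambda]$.
\end{itemize}
In $\mathsf{V}[G_\lambda]$, $P_\lambda$ is a lower finite lattice of breadth $3$ and cardinality $\lambda=\aleph_2$. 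By Lemma~\ref{lemma:collapse}(3), $\mathsf{V}[G]$ is a generic extension of $\mathsf{V}[G_\lambda]$ by an $\omega_1$-closed forcing $\mathbb{Q}$ (the tail collapse), which in particular preserves $\omega_1$.

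\textbf{Step 2: a breadth-$2$ cofinal set to which the covering proposition applies.} In $\mathsf{V}[G]$, fix $x\in P\setminus\lambda$ and put $C\coloneqq\pi_{P_\lambda}[{\uparrow}x]$. By Lemma~\ref{lemma:combproj}, $C$ is cofinal in $P_\lambda$ and is order-isomorphic to $x\vee P_\lambda$. By Lemma~\ref{lemma:basicproj}, $x\vee P_\lambda$ is a join-semilattice of breadth at most $2$, and Ditor's bound forces its breadth to be exactly $2$. So Proposition~\ref{prop:covering} applies in $\mathsf{V}[G_\lambda]$ to a $\mathbb{Q}$-name $\dot C$. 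It gives: for every $q\in\mathbb{Q}$ there is $y\in P_\lambda$ such that no $F\in[{\uparrow}y]^{\le\aleph_1}$ is forced by $q$ to meet $\dot C$. Equivalently, some $q'\le q$ forces $\dot C\cap F=\emptyset$.

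\textbf{Step 3: a fusion play defeating $\sigma$.} Inside $\mathsf{V}[G_\lambda]$, fix an ideal $J\subseteq P_\lambda$ of size $\aleph_1$ containing $\omega$ and closed enough (for example $J=P_\lambda\cap M$ for a suitable $M\prec H(\theta)$ of size $\aleph_1$ with $\omega_1\subseteq M$). The aim is a fusion construction producing simultaneously:
\begin{itemize}
\item a decreasing sequence $\langle q_k\rangle$ in $\mathbb{Q}$;
\item an increasing sequence $\langle y_k\rangle$ in $J$;
\item a play $n_0\le m_0\le n_1\le\cdots$ in which Player II follows $\sigma$.
\end{itemize}
At stage $k$, I first apply Proposition~\ref{prop:covering} below $q_k$ to get $y_k$. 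Player I then plays $n_k$ above $\pi_\omega(y_k)$ and above the finite set ${\downarrow}\pi_\omega(y_k)$, and $\sigma$ answers $m_k$. Next I extend to $q_{k+1}$ forcing $\dot C\cap F_k=\emptyset$, where
\[
F_k\coloneqq J\cap{\uparrow}y_k\cap\pi_\omega^{-1}\big([n_k,m_k)\big),
\]
which has size at most $\aleph_1$. Since $\mathbb{Q}$ is $\omega_1$-closed, the sequence has a lower bound $q_\omega$, and the whole play lies in $\mathsf{V}[G_\lambda]$. Below $q_\omega$ I then need a single set $U=\pi_\omega[{\uparrow}z]\in\mathcal{F}_\omega^P$ disjoint from all but finitely many intervals $[n_k,m_k)$. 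For this I use $\pi_\omega\circ\pi_{P_\lambda}=\pi_\omega$ (Lemma~\ref{lemma:concproj}) to rewrite $U$ in terms of $C$ or $\dot C$. That would show Player I beat $\sigma$, contradicting that $\sigma$ is winning. By genericity this works for a dense set of conditions, which is enough.

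\textbf{Main obstacle.} The delicate point is exactly this final matching. The sets $F_k$ must simultaneously
\begin{itemize}
\item have size at most $\aleph_1$, which is why I restrict to $J$ rather than all of ${\uparrow}y_k$, since fibres of $\pi_\omega$ in $P_\lambda$ can have size $\aleph_2$; and
\item capture every element of the relevant $C$-cone whose projection to $\omega$ lands in $[n_k,m_k)$.
\end{itemize}
To reconcile the two, I would first try to choose $x$ and the cone so that the relevant part of $C$ lies inside $J$. Concretely, I would project further through $J$ using $\pi_\omega=\pi_\omega\circ\pi_J$, and use Theorem~\ref{thm:chainprojection} to arrange that the projected cones onto the countable ideal $\omega$ are chains. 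If this fails, the fallback is to change the countable ideal: by Proposition~\ref{prop:rudinblass} and Lemma~\ref{lemma:preservmeager}, meagerness of $\mathcal{F}_I^P$ does not depend on the countably infinite ideal $I$, so the play can be run on a more convenient one.
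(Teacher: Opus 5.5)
Your overall frame matches the paper's: reflect to an inaccessible $\lambda$, take $C=\pi_\lambda[P\uparrow x]$ with $x\notin\lambda$, and run a fusion in the tail collapse driven by Proposition~\ref{prop:covering}. But the step you flag as the main obstacle is where the real work lies, and your set-up cannot complete it. To defeat $\sigma$ you need some $z\succeq x$ such that $\pi_\omega[P\uparrow z]\subseteq\pi_\omega[C]$ misses almost all intervals $[n_k,m_k)$. Yet you have only forced $C\cap F_k=\emptyset$, and an element $c\in C$ with $\pi_\omega(c)\in[n_k,m_k)$ need be neither in $J$ nor above $y_k$. Taking $n_k$ beyond ${\downarrow}\pi_\omega(y_k)$ only yields $\pi_\omega(c)\not\preceq\pi_\omega(y_k)$, which says nothing about $c\succeq y_k$.

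Routing through $J$ via $\pi_\omega=\pi_\omega\circ\pi_J$ does not help either. You would need $\pi_J[P\uparrow x]\subseteq C$, and the natural argument for this is to write $\pi_J(w)=\pi_\lambda(w)\wedge b$ for some $b\in C\cap J$ above $\pi_J(w)$, using Lemma~\ref{lemma:meetproj} and the fact that $C$ is a meet-subsemilattice. That argument requires $C\cap J$ to be cofinal in $J$, which Lemma~\ref{lemma:covering} rules out for every uncountable proper ideal $J\in\mathsf{V}[G\upharpoonright\lambda]$ of $P_\lambda$. Moreover, no chain is cofinal in an uncountable ideal of a lower finite lattice, so Theorem~\ref{thm:chainprojection} is unavailable at size $\aleph_1$. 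Choosing $|J|=\aleph_1$ is the wrong move.

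The missing ideas are as follows. First, work with a countable ideal $I=\lambda\cap M$, where $M\prec H(\theta)$ is countable, together with a chain $U=\pi_I[P\uparrow\tilde y]\in\mathcal{F}^P_I$ given by Theorem~\ref{thm:chainprojection}. As $U$ is a chain cofinal in $I$, $\pi_\omega\upharpoonright U$ is finite-to-one and $U\setminus{\uparrow}y$ is finite for every $y\in I$. So once Proposition~\ref{prop:covering} and elementarity give $y_k\in I$, the relevant piece of $U$ is a finite subset of ${\uparrow}y_k$. A condition in $M$ extending $q_k$ can then force $\dot C$ to be disjoint from it. In your game version that piece is $U\cap\pi_\omega^{-1}([n_k,m_k))$, with $n_k$ beyond every element of $\pi_\omega[U\setminus{\uparrow}y_k]$; in the paper it is $B_k=U\cap\pi_\omega^{-1}(A_k)$.

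Second, interleave steps forcing $\dot C\cap I$ to be cofinal in $I$, which is possible precisely because $I$ is countable. The meet-closure argument above, now with $I$ in place of $J$, gives $\pi_I[P\uparrow x]\subseteq C$. Hence, for $y$ with $\pi_I[P\uparrow y]\subseteq U$, you get $\pi_\omega[P\uparrow(x\vee y)]\subseteq\pi_\omega[C\cap U]$, which is the required member of $\mathcal{F}^P_\omega$.

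With these ingredients your Laflamme-game formulation works. It is essentially equivalent to the paper's argument, which instead uses Talagrand's characterization (Lemma~\ref{lemma:characterizationmeager}) with fixed blocks $A_k$ and only needs to kill $C$ on infinitely many $B_k$.
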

\begin{proof}
For each $\lambda < \kappa$, let $G \upharpoonright \lambda \coloneqq G \cap \mathrm{Coll}(\omega_1, {<}\lambda)$. Since $\mathrm{Coll}(\omega_1, {<}\lambda)$ is a complete subforcing of $\mathrm{Coll}(\omega_1, {<}\kappa)$, $G \upharpoonright \lambda$ is $\mathsf{V}$-generic for $\mathrm{Coll}(\omega_1, {<}\lambda)$. 

In the ground model $\mathsf{V}$, let $\dot{\preceq}$ be a $\mathrm{Coll}(\omega_1, {<}\kappa)$-name for the partial order $\preceq$ on $\kappa$, let $\dot{G}$ be the canonical name for the generic filter, and pick $p \in G$ such that
\[
p \Vdash (\kappa, \dot{\preceq}) \text{ is a lower finite lattice}.
\]
Then, by a routine argument using the regularity of $\kappa$ and the fact that $\mathrm{Coll}(\omega_1, {<}\kappa)$ is $\kappa$-cc, it follows that the set of ordinals $\lambda < \kappa$ such that 
\begin{equation}\label{eq:initial}
p \Vdash \lambda \text{ is an ideal of } (\kappa, \dot{\preceq}) \text{ and } {\dot{\preceq}} \cap (\lambda \times \lambda) \in \mathsf{V}[\dot{G} \upharpoonright \lambda],
\end{equation}
is a club subset of $\kappa$. Since $\kappa$ is Mahlo in the ground model $\mathsf{V}$, we conclude that we can pick an inaccessible (in $\mathsf{V}$) cardinal $\lambda < \kappa$ satisfying \eqref{eq:initial}. Thus, as $p \in G$, we have that $\lambda$ is an ideal of $P$ (in $\mathsf{V}[G]$) and ${\preceq} \cap (\lambda \times \lambda) \in \mathsf{V}[G \upharpoonright \lambda]$.

 Since $\lambda$ is inaccessible, $\mathrm{Coll}(\omega_1, {<}\lambda)$ preserves $\lambda$, and therefore $\lambda = \omega_2^{\mathsf{V}[G \upharpoonright \lambda]}$. In particular, $(\lambda, \preceq)$ is a lower finite lattice of breadth $3$ and cardinality $\aleph_2$ in $\mathsf{V}[G \upharpoonright \lambda]$. Now fix some $\tilde{
x} \in \kappa \setminus \lambda$.  

\begin{claim}\label{thm:main:claim1}
In $\mathsf{V}[G]$, for every sequence $\langle A_k : k\in\omega\rangle$ of finite pairwise disjoint subsets of $\omega$, there exist a countable ideal $I$ of $P$ and  a set $U\in \mathcal{F}_I^P$ such that:
\begin{enumerate}[label={\upshape (\arabic*)}]
\item $\omega \subseteq I \subseteq \lambda$,
\item $\pi_\lambda[P \uparrow \tilde{x}] \cap I$ is cofinal in $I$,
\item $\pi_\omega [\pi_{\lambda}[P \uparrow \tilde{x}] \cap U] \cap A_k = \emptyset$ for infinitely many $k \in \omega$.
\end{enumerate}
\end{claim}
\begin{why}
For notational clarity, let us denote the projected upper cone $\pi_{\lambda}[ P \uparrow \tilde{x}]$ by $C$. Note that by Lemmas~\ref{lemma:combproj} and \ref{lemma:basicproj}, $C$ is a cofinal meet-subsemilattice of $(\lambda, \preceq)$ of breadth $2$ in the induced ordering.

Fix a sequence $\langle A_k : k\in\omega\rangle$ of finite, pairwise disjoint subsets of $\omega$. We will construct a countable ideal $I$ of $P$ and some $\tilde{y} \in \lambda$ such that $I$ and $U = \pi_I[P \uparrow \tilde{y}] \in \mathcal{F}_I^P$ satisfy (1)-(3). Since we mostly work with the lattice $(\lambda, \preceq)$ until the end of the proof of Claim~\ref{thm:main:claim1}, let us denote it by $Q$ to avoid confusion with the larger lattice $P =  (\kappa, \preceq)$. 

Observe that for every ideal $I$ of $P$ with $I \subseteq \lambda$ and every $y \in \lambda$, $I$ is also an ideal of $Q$ and $\pi_I[P \uparrow y] = \pi_I[Q \uparrow y]$. Indeed, we have
\[
\pi_I[P \uparrow y] = \pi_I[y \vee I] = \pi_I[Q \uparrow y],
\]
where the first equality follows from Lemma~\ref{lemma:combproj}(2) and the second one from the same lemma and $y\vee I \subseteq \lambda$. Consequently, the projected upper cone $\pi_I[P \uparrow y]$ can be computed in $\mathsf{V}[G \upharpoonright \lambda]$ as $\pi_I[Q \uparrow y]$, even though $P$ does not belong to $\mathsf{V}[G \upharpoonright \lambda]$.

Since our collapse forcing is $\sigma$-closed, the sequence $\langle A_k : k\in\omega\rangle$ already belongs to the ground model $\mathsf{V}$. In particular, our sequence belongs to $\mathsf{V}[G \upharpoonright \lambda]$. 

The tail forcing $\mathrm{Coll}(\omega_1, {<}\kappa)/(G \upharpoonright \lambda)$ is equivalent to $\mathrm{Coll}(\omega_1, {<}\kappa)$ in $\mathsf{V}[G \upharpoonright \lambda]$. Hence, there exists a $\mathsf{V}[G \upharpoonright \lambda]$-generic filter $H$ for $\mathrm{Coll}(\omega_1, {<}\kappa)^{\mathsf{V}[G \upharpoonright \lambda]}$ such that $\mathsf{V}[G] = \mathsf{V}[G \upharpoonright \lambda][H]$. From now until the end of the proof of Claim~\ref{thm:main:claim1}, we work in $\mathsf{V}[G \upharpoonright \lambda]$ with the lattice $Q$, and we simply write ${\uparrow} z$ instead of $Q \uparrow z$ for every $z \in \lambda$. 

Fix a $\mathrm{Coll}(\omega_1, {<}\kappa)$-name $\dot{C}$ for $C$ and a condition $p \in H$ such that 
\[
p \Vdash \dot{C} \text{ is a cofinal meet-subsemilattice of }Q \text{ of breadth }2 \text{ in the induced ordering}.
\]

We prove our claim using a density argument. More precisely, we show that for every $q \le p$ there exist $r \le q$, a countable ideal $I$ of $Q$ with  $\omega \subseteq I$, and some $\tilde{y} \in Q$ such that $r$ forces $\dot{C} \cap I$ to be cofinal in $I$ and also forces $\pi_\omega [\dot{C} \cap \pi_I[{\uparrow} \tilde{y}]] \cap A_k = \emptyset$ for infinitely many $k \in \omega$.

Fix any $q \le p$ and a countable elementary submodel $M \prec H(\theta)$ for some sufficiently large regular cardinal $\theta$ with $\kappa, q, \dot{C}, Q \in M$. Denote $\lambda \cap M$ by $I$. It quickly follows from $\preceq$ being lower finite and from the elementarity of $M$ that $I$ is a countably infinite ideal of $Q$ and $\omega \subseteq I$. 

Since $Q$ is a lower finite lattice of breadth $3$ and cardinality $\aleph_2$, it follows from Theorem~\ref{thm:chainprojection} that there exists $\tilde{y} \in Q$ such that $\pi_I[{\uparrow} \tilde{y}]$ is a chain.  Fix one such $\tilde{y}$ and let us denote $\pi_I[{\uparrow} \tilde{y}]$ by $U$. The rest of the proof consists of constructing a condition $r \le q$ which forces $\dot{C} \cap I$ to be cofinal in $I$ and $\pi_\omega[\dot{C} \cap U] \cap A_k = \emptyset$ for infinitely many $k \in\omega$.

For each $k\in\omega$, let $B_k \coloneqq U \cap \pi_{\omega}^{-1}(A_k)$. Since $U$ is a chain cofinal in $I$, the map $\pi_\omega \upharpoonright U$ is finite-to-one (see the beginning of the proof of Proposition~\ref{prop:rudinblass}). Thus, the sets $B_k$ are finite and pairwise disjoint.

Fix an enumeration $\langle x_n : n \in\omega\rangle$ of $I$. We now define a decreasing sequence $\langle q_n : n\in\omega \rangle$ of conditions in $\mathrm{Coll}(\omega_1, {<}\kappa) \cap M$ with $q_0 = q$ and an increasing sequence of natural numbers $\langle k_n : n\in\omega\rangle$  such that, for every $n\in\omega$:
\begin{enumerate}[label=(\alph*)]
\item $q_{n+1} \Vdash \dot{C} \cap B_{k_{n+1}} = \emptyset$, and
\item $q_{n+1} \Vdash \dot{C} \cap I \cap ({\uparrow} x_n) \neq \emptyset$.
\end{enumerate}

First let $q_0 = q$ and $k_0 = 0$. Now suppose that $q_n$ and $k_n$ have been defined. We define $q_{n+1}$ and $k_{n+1}$ as follows. Since $Q$ has cardinality $\aleph_2$, it follows from Proposition~\ref{prop:covering} that there exists $y \in Q$ such that for every finite $F \subseteq {\uparrow} y$, $q_{n} \not\Vdash \dot{C} \cap F \neq \emptyset$. By elementarity of $M$, we can find such a $y$ in $I$.

Since $U$ is a chain cofinal in $I$, the set $U \setminus ({\uparrow}y)$ is finite. 
As the sets $B_k$ are pairwise disjoint subsets of $U$, there exists $k > k_n$ such that $B_k \subseteq {\uparrow} y$. Let $k_{n+1}$ be one such $k$. By the way we chose $y$, we conclude $q_n \not\Vdash \dot{C} \cap B_{k_{n+1}} \neq \emptyset$. By elementarity of $M$, there exists a condition $s \in M$ with $s \le q_n$ such that $s \Vdash \dot{C} \cap B_{k_{n+1}} = \emptyset$---note that $B_{k_{n+1}}$, being a finite subset of $M$, belongs to $M$. Moreover, since $p$ forces $\dot{C}$ to be cofinal in $Q$ and $s$ extends $p$, we can pick $q_{n+1} \in M$ with $q_{n+1} \le s$ such that, for some $z \in I$ with $x_n \preceq z$, $q_{n+1} \Vdash z \in \dot{C}$. This ends the inductive definition of the $q_n$'s and $k_n$'s.

Let $r = \bigcup_n q_n$. By construction, $r$ forces $\dot{C} \cap I$ to be cofinal in $I$, and also forces $\pi_\omega[\dot{C} \cap U] \cap A_{k_n} = \pi_\omega[\dot{C} \cap B_{k_n}] = \emptyset$ for every $n > 0$. We are done.
\end{why}

Now we complete the proof of Proposition~\ref{prop:nonmeager}. By Lemma~\ref{lemma:characterizationmeager}, we need to show that for every sequence $\langle A_k : k\in\omega\rangle$ of finite pairwise disjoint subsets of $\omega$, there exists some $V \in \mathcal{F}_\omega^P$ such that $V \cap A_k = \emptyset$ for infinitely many $k$.

Fix a sequence $\langle A_k : k\in\omega\rangle$ of finite pairwise disjoint subsets of $\omega$. Fix also a countable ideal $I$ of $P$ and some $U \in \mathcal{F}_I^P$ satisfying (1)-(3) of Claim~\ref{thm:main:claim1} for $\langle A_k : k\in\omega\rangle$. Pick $y \in P$ such that $\pi_I[P \uparrow y] \subseteq U$ and let $V = \pi_\omega[P \uparrow (\tilde{x} \vee y)]$. Clearly, $V \in \mathcal{F}_\omega^P$.
\begin{claim}
$V \cap A_k = \emptyset$ for infinitely many $k\in\omega$. 
\end{claim}
\begin{why}
By the way we chose $I$ and $U$, there are infinitely many $k \in\omega$ such that $\pi_\omega [\pi_{\lambda}[ P \uparrow \tilde{x}] \cap U] \cap A_k = \emptyset$. Fix one such $k$. We show that $V \cap A_k = \emptyset$.

We first claim that $\pi_I[P \uparrow \tilde{x}] \subseteq \pi_{\lambda}[P \uparrow\tilde{x}]$.  Pick some $w \in P$ with $\tilde{x} \preceq w$. We show that $\pi_I(w) \in \pi_{\lambda}[P \uparrow\tilde{x}]$. By property (2) of $I$, $\pi_\lambda[P \uparrow \tilde{x}] \cap I$ is cofinal in $I$. Hence, we now fix some $b \in \pi_{\lambda}[P \uparrow\tilde{x}] \cap I$ such that $\pi_I(w) \preceq b$. The following holds:
\begin{align*}
\pi_\lambda(w) \wedge b &= \pi_I(\pi_\lambda(w) \wedge b)\\
&= (\pi_I \circ \pi_\lambda) (w) \wedge b\\
&=\pi_I(w) \wedge b = \pi_I(w),
\end{align*}
where the first equality follows from $\pi_I \upharpoonright I$ being the identity and from $\pi_\lambda(w) \wedge b \in I$, as $b \in I$ and $I$ is downward closed; the second one follows from Lemma~\ref{lemma:meetproj}; the third one follows from $I \subseteq \lambda$ and  Lemma~\ref{lemma:concproj}, and the last one from $\pi_I(w) \preceq b$. Thus, $\pi_\lambda(w) \wedge b = \pi_I(w)$. By Lemma~\ref{lemma:combproj}(1), $\pi_{\lambda}[P \uparrow\tilde{x}]$ is a meet-subsemilattice of $P$, and therefore $\pi_{\lambda}(w) \wedge b \in \pi_{\lambda}[P \uparrow\tilde{x}]$, given that both $\pi_{\lambda}(w)$ and $b$ belong to $\pi_{\lambda}[P \uparrow\tilde{x}]$. We conclude that $\pi_I(w) \in \pi_{\lambda}[P \uparrow \tilde{x}]$. In particular, we have $\pi_I[P \uparrow \tilde{x}] \subseteq \pi_{\lambda}[P \uparrow\tilde{x}]$, as we wanted to show.

It follows from the preceding paragraph and from the way we chose $y$ that 
\[
\pi_I[P \uparrow(\tilde{x} \vee y)] \subseteq \pi_{I}[P \uparrow\tilde{x}] \cap \pi_I[P \uparrow y] \subseteq \pi_{\lambda}[P \uparrow\tilde{x}] \cap U.
\] 
From $\omega \subseteq I$ and from Lemma~\ref{lemma:concproj}, it follows that $\pi_\omega = \pi_\omega \circ \pi_I$. In particular,  
\[
V = \pi_\omega[P \uparrow (\tilde{x} \vee y)] = (\pi_\omega \circ \pi_I)[P \uparrow (\tilde{x} \vee y)] \subseteq \pi_\omega[\pi_{\lambda}[P \uparrow\tilde{x}] \cap U].
\]
Therefore, by the way we chose $k$, we have $V \cap A_k = \emptyset$.  Since there are infinitely many $k$ such that $\pi_\omega[\pi_{\lambda}[P \uparrow\tilde{x}] \cap U] \cap A_k = \emptyset$, our claim follows.
\end{why}
This finishes the proof.
\end{proof}

So, by Proposition~\ref{prop:nonmeager}, in $\mathsf{V}[G]$ the filter $\mathcal{F}_\omega^P$ is non-meager, or, equivalently, for every sequence $\langle A_k : k\in\omega\rangle$ of finite, pairwise disjoint subsets of $\omega$, there exists $x \in \kappa$ such that $\pi_\omega[P \uparrow x] \cap A_k = \emptyset$ for infinitely many $k$. As $\mathsf{CH}$ holds in $\mathsf{V}[G]$ and $\kappa={\omega_2}^{\mathsf{V}[G]}$, there exists $\alpha < \kappa$ such that for every $\langle A_k : k\in\omega\rangle$ of finite, pairwise disjoint subsets of $\omega$, there exists $x \in \alpha$ such that $\pi_\omega[ P \uparrow x] \cap A_k = \emptyset$ for infinitely many $k$.

Arguing as at the beginning of the proof of Proposition~\ref{prop:nonmeager}, we can show that there is a $\mu < \kappa$ with $\alpha \le \mu$ such that $\mu$ is inaccessible in $\mathsf{V}$, $\mu$ is an ideal of $P$ (in $\mathsf{V}[G]$) and ${\preceq} \cap (\mu \times \mu) \in \mathsf{V}[G \upharpoonright \mu]$. Denote by $R$ the lattice $(\mu, \preceq)$. 

\begin{lemma}\label{lemma:abs}
In $\mathsf{V}[G \upharpoonright \mu]$, the filter $\mathcal{F}_\omega^R$ is non-meager.
\end{lemma}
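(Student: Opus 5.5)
The plan is to verify Talagrand's condition (3) of Lemma~\ref{lemma:characterizationmeager} inside $\mathsf{V}[G \upharpoonright \mu]$. We show that every sequence of finite pairwise disjoint sets there is avoided infinitely often by some member of $\mathcal{F}_\omega^R$. The witness is pulled down from $\mathsf{V}[G]$ using the choice of $\alpha \le \mu$. First, a sanity check: $R = (\mu,\preceq)$ belongs to $\mathsf{V}[G\upharpoonright\mu]$ and is a lower finite lattice there. Indeed, $\mu$ is an ideal of $P$, so it is closed under $\vee$ and downward closed, and lower finiteness is absolute. Also, $\omega \subseteq \mu$ is a countably infinite ideal of $R$, so $\mathcal{F}_\omega^R$ is a filter on $\omega$ in $\mathsf{V}[G\upharpoonright\mu]$.

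Now fix, in $\mathsf{V}[G \upharpoonright \mu]$, a sequence $\langle A_k : k\in\omega\rangle$ of finite pairwise disjoint subsets of $\omega$. Since $\mathsf{V}[G\upharpoonright\mu] \subseteq \mathsf{V}[G]$, the sequence also lies in $\mathsf{V}[G]$. It could alternatively be seen to lie in $\mathsf{V}$ by $\sigma$-closedness, but that is not needed. By the choice of $\alpha$, there is $x \in \alpha \subseteq \mu$ such that $\pi_\omega[P\uparrow x] \cap A_k = \emptyset$ for infinitely many $k$.

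The key step is the identity $\pi_\omega[P \uparrow x] = \pi_\omega[R \uparrow x]$ for $x \in \mu$, argued exactly as in the observation inside the proof of Claim~\ref{thm:main:claim1}. By Lemma~\ref{lemma:combproj}(2), $\pi_\omega[P\uparrow x] = \pi_\omega[x \vee \omega]$. Since $\mu$ is an ideal of $P$ containing $x$ and $\omega$, we have $x\vee\omega \subseteq \mu$, and joins and the order in $R$ agree with those in $P$. Moreover, $\pi_\omega$ computed in $R$ agrees with $\pi_\omega$ computed in $P$ on elements of $\mu$, because ${\downarrow}z \subseteq \mu$ for $z\in\mu$. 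Applying Lemma~\ref{lemma:combproj}(2) again, now in $R$, we get $\pi_\omega[x\vee\omega] = \pi_\omega[R\uparrow x]$. The right-hand side is computed from $R$ and $x$ alone, so it is the same set whether evaluated in $\mathsf{V}[G\upharpoonright\mu]$ or in $\mathsf{V}[G]$. This is a $\Delta_0$ computation from parameters in the smaller model.

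Hence $V \coloneqq \pi_\omega[R\uparrow x] \in \mathcal{F}_\omega^R$ in $\mathsf{V}[G\upharpoonright\mu]$. The statement ``$V\cap A_k=\emptyset$ for infinitely many $k$'' is absolute between the two models, so it holds in $\mathsf{V}[G\upharpoonright\mu]$. The sequence was arbitrary, so by Lemma~\ref{lemma:characterizationmeager} the filter $\mathcal{F}_\omega^R$ is non-meager in $\mathsf{V}[G\upharpoonright\mu]$.

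The only delicate point is the identity $\pi_\omega[P\uparrow x]=\pi_\omega[R\uparrow x]$, together with the fact that it is evaluated in the smaller model. This is where we use that $\mu$ is an ideal of $P$ and that ${\preceq}\cap(\mu\times\mu)\in\mathsf{V}[G\upharpoonright\mu]$. Everything else is routine absoluteness.
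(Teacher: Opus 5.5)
Your proof is correct and follows the paper's argument: you pull the witness $x\in\alpha\subseteq\mu$ down from $\mathsf{V}[G]$, identify it with a member of $\mathcal{F}_\omega^R$, and conclude by absoluteness. The only difference is cosmetic. You establish the full equality $\pi_\omega[P\uparrow x]=\pi_\omega[R\uparrow x]$ via Lemma~\ref{lemma:combproj}(2), whereas the paper needs only the inclusion $\pi_\omega[R\uparrow x]\subseteq\pi_\omega[P\uparrow x]$, which follows immediately from $R\uparrow x=(P\uparrow x)\cap\mu$.
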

\begin{proof}
Pick a sequence $\langle A_k : k\in\omega\rangle$ in $\mathsf{V}[G \upharpoonright \mu]$ of finite, pairwise disjoint subsets of $\omega$. By the way we chose $\alpha$, we know that, in $\mathsf{V}[G]$, there exists $y \in \alpha$ such that $\pi_\omega[P \uparrow y] \cap A_k = \emptyset$ for infinitely many $k$. Since $\alpha \le \mu$,  $y \in \mu$. Moreover, $R \uparrow y = (P \uparrow y) \cap \mu$. In particular, we conclude that $\pi_\omega[R \uparrow y] \cap A_k = \emptyset$ for infinitely many $k$. Furthermore, the statement ``$\pi_\omega[R \uparrow y] \cap A_k = \emptyset$" is absolute in the relevant parameters, and therefore also in $\mathsf{V}[G \upharpoonright \mu]$ we have $\pi_\omega[R \uparrow y] \cap A_k = \emptyset$ for infinitely many $k$. We conclude that the filter $\mathcal{F}_\omega^R$ is non-meager in $\mathsf{V}[G \upharpoonright \mu]$.
\end{proof}

As we noted at the beginning of the proof of Claim~\ref{thm:main:claim1}, there exists a $\mathsf{V}[G \upharpoonright \mu]$-generic filter $H$ for $\mathrm{Coll}(\omega_1, {<}\kappa)^{\mathsf{V}[G \upharpoonright \mu]}$ such that $\mathsf{V}[G] = \mathsf{V}[G \upharpoonright \mu][H]$.

Fix any $\tilde{x} \in \kappa \setminus \mu$ and let $C \coloneqq \pi_{\mu}[P \uparrow \tilde{x}]$. By Lemma~\ref{lemma:combproj}, Lemma~\ref{lemma:basicproj}, and Ditor's Theorem~\ref{thm:ditor}, $C$ is a cofinal meet-subsemilattice of $R$ of breadth $2$ in the induced ordering. Fix a $\mathrm{Coll}(\omega_1, {<}\kappa)$-name $\dot{C}$ for $C$ and a condition $p \in H$ such that 
\begin{multline}\label{eq:contr}
p \Vdash \dot{C} \text{ is a cofinal meet-subsemilattice of }R \text{ of breadth }2\\\text{in the induced ordering}.
\end{multline}

From now until the end of the section---that is, until the end of the proof of Theorem~\ref{thm:maind}---we work in $\mathsf{V}[G \upharpoonright \mu]$ with the lattice $R$. To simplify the notation, let us denote the filters $\mathcal{F}^R_I$ simply by $\mathcal{F}_I$ and let us write ${\uparrow} z$ instead of $R \uparrow z$. 

 Fix a countable elementary submodel $M \prec H(\theta)$ for some large enough regular cardinal $\theta$ with $\kappa, p, \dot{C}, R  \in M$. Set $I \coloneqq \mu \cap M$. It follows quickly from the lower finiteness of $R$ and from the elementarity of $M$ that $I$ is a countably infinite ideal of $R$ and $\omega \subseteq I$. By Proposition~\ref{prop:rudinblass} and Lemma~\ref{lemma:preservmeager}, it follows from $\mathcal{F}_\omega$ being non-meager that $\mathcal{F}_I$ is also non-meager.

Since $R$ is a lower finite lattice of breadth $3$ and cardinality $\aleph_2$, we can fix a chain $U \in \mathcal{F}_I$ by Theorem~\ref{thm:chainprojection}. Let $\langle u_n : n \in\omega\rangle$ be a $\preceq$-increasing enumeration of $U$. Since $\mathcal{F}_I$ is non-meager and $U \in \mathcal{F}_I$, it is easy to see that the filter 
\[
\tilde{\mathcal{F}}_I \coloneqq \big\{X \in [\omega]^\omega : \{u_n : n \in X\} \in \mathcal{F}_I\big\}
\]  
on $\omega$ is also non-meager---the filters $\mathcal{F}_I$ and $\tilde{\mathcal{F}}_I$ are actually isomorphic.

Now consider the following game $\mathcal{G}^\star$. In this game, at the $k$-th round, Player I plays a condition $p_k$ in $\mathrm{Coll}(\omega_1, {<}\kappa) \cap M$, and then Player II plays another condition $q_k$ in $\mathrm{Coll}(\omega_1, {<}\kappa) \cap M$
\begin{center}
\begin{tabular}{cccccccc}
I & $p_0$ & & $p_1$ & & $p_2$ & & ... \\
II & & $q_0$ & & $q_1$ & & $q_2$ & ...\\
\end{tabular}
\end{center}
\noindent with the rule: $p_{k+1} \le q_k \le p_k \le p$ for every $k \in \omega$. At the end of a play, Player II \emph{wins} if
\[
\bigcap_{k\in\omega} \{x \in I : p_k \not\Vdash x \not\in \dot{C}\} \in \mathcal{F}_I^+.
\]
In what follows, $\mathcal{G}$ refers to Laflamme's game discussed in Section~\ref{sec:prel:filters}. Strategies for the game $\mathcal{G}^\star$ are defined analogously to those for Laflamme's game.

\begin{lemma}\label{lemma:game}
If Player II has a winning strategy in $\mathcal{G}^\star$, then Player II has a winning strategy in $\mathcal{G}(\tilde{\mathcal{F}}_I)$.
\end{lemma}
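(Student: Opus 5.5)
The plan is to simulate a play of $\mathcal{G}^\star$ inside a play of Laflamme's game $\mathcal{G}(\tilde{\mathcal{F}}_I)$. Fix a winning strategy $\sigma^\star$ for Player II in $\mathcal{G}^\star$. While Player I plays natural numbers $n_0, n_1, \dots$ in $\mathcal{G}(\tilde{\mathcal{F}}_I)$, Player II privately builds a play $p_0, q_0, p_1, q_1, \dots$ of $\mathcal{G}^\star$ that follows $\sigma^\star$. The responses $m_k$ will be chosen so that every index $j$ in a gap $[m_{k-1}, n_k)$ satisfies $p_k \Vdash u_j \notin \dot{C}$. The winning condition of $\mathcal{G}^\star$ then transfers to the index set $\bigcup_k [n_k, m_k)$.

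\emph{The tool for choosing $m_k$.} This is Proposition~\ref{prop:covering}, applied in $\mathsf{V}[G \upharpoonright \mu]$ to $\mathrm{Coll}(\omega_1,{<}\kappa)$ (which preserves $\omega_1$), the lattice $R$ (of cardinality $\aleph_2$), and $\dot{C}$. The name $\dot C$ is forced to be a cofinal subset of $R$ which, by Lemma~\ref{lemma:combproj}(3), is order-isomorphic to $\tilde{x} \vee R$, hence a join-semilattice of breadth $2$. This is exactly how the proposition was used in Claim~\ref{thm:main:claim1}.

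For any condition $q \le p$ in $M$, the proposition yields some $y \in R$ such that $q \not\Vdash \dot{C} \cap F \neq \emptyset$ for every finite $F \subseteq {\uparrow} y$. By elementarity such a $y$ lies in $M$, hence in $I$. Since $U$ is a chain cofinal in $I$, only finitely many $u_j$ fail to be $\succeq y$. For any finite $F \subseteq {\uparrow} y$, we have $F \in M$, so by elementarity there is $s \le q$ in $M$ with $s \Vdash \dot{C} \cap F = \emptyset$.

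\emph{The strategy.}
\begin{enumerate}
\item Round $0$: when Player I plays $n_0$, let Player I of $\mathcal{G}^\star$ play $p_0 = p$, and set $q_0 = \sigma^\star(\langle p_0\rangle)$.
\item Pick $y_0 \in I$ for $q_0$ as above, and answer $m_0 \ge n_0$ so large that $u_j \succeq y_0$ for all $j \ge m_0$.
\item Round $k+1$: given $n_{k+1} \ge m_k$, the set $F_{k+1} = \{u_j : m_k \le j < n_{k+1}\}$ is a finite subset of ${\uparrow} y_k$. Choose $p_{k+1} \le q_k$ in $M$ forcing $\dot{C} \cap F_{k+1} = \emptyset$.
\item Let $q_{k+1} = \sigma^\star(\langle p_0,\dots,p_{k+1}\rangle)$, choose $y_{k+1}$ for $q_{k+1}$, and answer $m_{k+1} \ge n_{k+1}$ accordingly.
\end{enumerate}
This is a legal strategy for Player II in $\mathcal{G}(\tilde{\mathcal{F}}_I)$, and the auxiliary play is a legal play of $\mathcal{G}^\star$ following $\sigma^\star$.

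\emph{Verification.} Since $\sigma^\star$ is winning, $X \coloneqq \bigcap_k \{x \in I : p_k \not\Vdash x \notin \dot{C}\} \in \mathcal{F}_I^+$. Because $U \in \mathcal{F}_I$, also $X \cap U \in \mathcal{F}_I^+$. If $u_j \in X$, then $j$ cannot lie in any gap $[m_k, n_{k+1})$, since there $p_{k+1} \Vdash u_j \notin \dot{C}$. Hence
\[
X \cap U \subseteq \{u_j : j \in [0,n_0) \cup \textstyle\bigcup_k [n_k, m_k)\}.
\]
Since $\mathcal{F}_I^+$ is upward closed, this set lies in $\mathcal{F}_I^+$. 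Removing the finitely many indices below $n_0$ keeps it in $\mathcal{F}_I^+$, as $\mathcal{F}_I$ contains the Fréchet filter. By the definition of $\tilde{\mathcal{F}}_I$, this gives $\bigcup_k [n_k, m_k) \in \tilde{\mathcal{F}}_I^+$, so the strategy is winning.

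The main point needing care is step~3: the existence of $p_{k+1}$ inside $M$ that kills the whole finite gap. This is where the choice of $m_k$ via Proposition~\ref{prop:covering} is essential, together with the elementarity of $M$ to keep both $y_k$ and the conditions in $M$.
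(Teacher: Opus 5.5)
Your simulation is the same as the paper's. The index beyond which $u_j\succeq y_k$ plays the role of the paper's $m(q_k)$, the condition $p_{k+1}\le q_k$ in $M$ kills the gap $\{u_j : m_k\le j<n_{k+1}\}$, and the winning set of $\mathcal{G}^\star$ is transferred through $U$ to $\tilde{\mathcal{F}}_I^+$.

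One step fails as written, because you only require $m_k\ge n_k$. Equality is allowed in Laflamme's game. So once $n_k$ is above the current threshold, your strategy may answer $m_k=n_k$, and Player I may then repeat $n_{k+1}=m_k$. Nothing in your construction forces the thresholds to grow: when the gap is empty, $p_{k+1}$ may be $q_k$ itself, and nothing in the hypothesis on $\sigma^\star$ prevents it from answering such moves passively. So the play can stall at some $N$. In that play $\bigcup_k[n_k,m_k)\subseteq[0,N)$ is finite, so Player II loses, and your inclusion
\[
X\cap U\subseteq\bigl\{u_j : j\in[0,n_0)\cup\textstyle\bigcup_k[n_k,m_k)\bigr\}
\]
breaks down. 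An index $j\ge N$ lies neither in some $[n_k,m_k)$ nor in some gap $[m_k,n_{k+1})$, while $X\cap U$ is infinite. The inclusion tacitly assumes that the intervals and the gaps together exhaust $[n_0,\infty)$, i.e.\ that $n_k\to\infty$.

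The fix is the one built into the paper's strategy $\tilde\sigma$: answer $m_k=\max(n_k,t_k)+1$, where $t_k$ is your threshold for $q_k$. Then $n_k<m_k\le n_{k+1}$, so $\langle n_k : k\in\omega\rangle$ is strictly increasing. Every $j\ge n_0$ then lies in exactly one $[n_k,m_k)$ or one $[m_k,n_{k+1})$, and the rest of your verification goes through unchanged.
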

\begin{proof}
Fix a winning strategy $\sigma$ for Player II in $\mathcal{G}^\star$. We define a winning strategy $\tilde{\sigma}$ for Player II in $\mathcal{G}(\tilde{\mathcal{F}}_I)$. 

First note that, by Proposition~\ref{prop:covering} and by the elementarity of $M$, for every $q \in \mathrm{Coll}(\omega_1, {<}\kappa) \cap M$ with $q \le p$, there exists $y \in I$ such that for every finite $F \subseteq I \cap ({\uparrow}y)$, $q \not\Vdash \dot{C} \cap F \neq \emptyset$. In particular, since $U$ is cofinal in $I$, this means that for every $q \in \mathrm{Coll}(\omega_1, {<}\kappa) \cap M$ with $q \le p$, there exists an $m \in \omega$ such that for every finite nonempty set $F \subseteq \omega$ with $m \le \min F$, $q \not\Vdash \{u_n : n \in F\} \cap \dot{C} \neq \emptyset$. For every such condition $q$, we let $m(q)$ be the least $m$ that satisfies this property. 

We now construct the strategy $\tilde{\sigma}$ by defining $\tilde{\sigma}(s)$, where $s$ is a finite sequence of natural numbers, inductively on the length of $s$. Simultaneously, we define an auxiliary map $T$ that maps finite sequences of natural numbers to finite decreasing sequences (of the same length) of conditions in $\mathrm{Coll}(\omega_1, {<}\kappa) \cap M$ below $p$. The role of the map $T$ is essentially to translate partial plays of Player I in $\mathcal{G}(\tilde{\mathcal{F}}_I)$ to partial plays of Player I in $\mathcal{G}^\star$.

First let $T(\langle n_0\rangle) = \langle p\rangle$ and  $\tilde{\sigma}(\langle n_0\rangle) = \max(n_0, m(\sigma(\langle p\rangle)))+1$ for every $n_0 \in\omega$. Now suppose that $\tilde{\sigma} \upharpoonright \omega^k$ and $T \upharpoonright \omega^k$ have been defined for some $k > 0$, and fix a sequence $\langle n_0, \ldots, n_k\rangle$ of natural numbers. Denote by $q_{k-1}$ the condition $\sigma(T(\langle n_0, \ldots, n_{k-1}\rangle))$. Note that $q_{k-1} \le p$, as every element of $T(\langle n_0, \ldots, n_{k-1}\rangle)$ is below $p$ by the induction hypothesis and $\sigma$ is a strategy for Player II in $\mathcal{G}^\star$. By definition of $m(q_{k-1})$ and by elementarity of $M$, we can fix a condition $p_k \in \mathrm{Coll}(\omega_1, {<}\kappa) \cap M$ such that $p_k \le q_{k-1}$ and 
\[
p_k \Vdash \{u_n : m(q_{k-1}) \le n < n_k\} \cap \dot{C} = \emptyset.
\]
Let 
\begin{align*}
T(\langle n_0, \ldots, n_k\rangle) &= T(\langle n_0, \dots, n_{k-1}\rangle)^\smallfrown \langle p_k\rangle,\\\tilde{\sigma}(\langle n_0, \ldots, n_k\rangle) &= \max \big(n_k, m(\sigma(T(\langle n_0, \ldots, n_k\rangle)))\big) + 1.
\end{align*}
This completes the definition of the strategy $\tilde{\sigma}$ and the map $T$.

Let us first note that $\tilde{\sigma}$ is a strategy for Player II in $\mathcal{G}(\tilde{\mathcal{F}}_I)$. Indeed, by construction, $\tilde{\sigma}(\langle n_0, \ldots, n_k\rangle) > n_k$ for every finite sequence $\langle n_0, \ldots n_k\rangle$ of natural numbers. 

Now let us show that $\tilde{\sigma}$ is a winning strategy. Fix an infinite sequence $\langle n_k : k\in\omega \rangle$ of natural numbers such that $\tilde{\sigma}(\langle n_0, \ldots, n_{k}\rangle) \le n_{k+1}$ for every $k$. Let $m_k \coloneqq \tilde{\sigma}(\langle n_0, \dots, n_k\rangle)$ and let $p_k$ be the last element of the (finite) sequence $T(\langle n_0, \ldots, n_k\rangle)$ for every $k\in\omega$. Clearly, $T(\langle n_0, \ldots, n_k\rangle) = \langle p_0, \ldots, p_k\rangle$ and $p_0 = p$. Also note that $n_k < m_k \le n_{k+1}$ for every $k\in\omega$. In particular, the sequence $\langle n_k: k\in\omega\rangle$ is strictly increasing. We claim that
\begin{equation}\label{eq:1}
(\omega \setminus n_0) \cap \bigcap_{k\in\omega}\{n \in \omega: p_k \not\Vdash u_n \not\in \dot{C}\} \subseteq \bigcup_{k\in\omega} [n_k, m_k).
\end{equation}
Indeed, pick an $n \ge n_0$ such that $n \not\in \bigcup_{k\in\omega} [n_k, m_k)$. We want to show that $n$ does not belong to the set on the left-hand side of \eqref{eq:1}. Since the sequence $\langle n_k : k \in\omega \rangle$ is strictly increasing, there must be $k\in\omega$ such that $n \in [m_k, n_{k+1})$. By construction,
\[
p_{k+1} \Vdash \{u_i : m(\sigma(\langle p_0, \ldots, p_k\rangle)) \le i < n_{k+1}\} \cap \dot{C} = \emptyset.
\]
Again by construction, $m(\sigma(\langle p_0, \ldots, p_k\rangle)) \le m_k$. In particular, we conclude that $p_{k+1} \Vdash u_n \not\in \dot{C}$, and therefore $n$ does not belong to the left-hand side set of \eqref{eq:1}. Thus, \eqref{eq:1} holds.

By \eqref{eq:1} it suffices to prove
\[
\bigcap_{k\in\omega}\{n \in \omega: p_k \not\Vdash u_n \not\in \dot{C}\} \in \tilde{\mathcal{F}}^+_I,
\] 
as $\bigcup_k [n_k, m_k) \in \tilde{\mathcal{F}}^+_I$ would directly follow.

Note that, by the way we defined $T$, we have $\sigma(\langle p_0, \ldots, p_k\rangle) \ge p_{k+1}$ for every $k$. Since, by assumption, $\sigma$ is a winning strategy for Player II in $\mathcal{G}^\star$, we have
\[
\bigcap_{k\in\omega}\{x \in I : p_k \not\Vdash x \not\in \dot{C}\} \in \mathcal{F}_I^+.
\] 
As $U \in \mathcal{F}_I$, we conclude that $\bigcap_{k\in\omega}\{x \in U : p_k \not\Vdash x \not\in \dot{C}\}$ also belongs to $\mathcal{F}^+_I$, or, equivalently, $\bigcap_{k\in\omega}\{n \in \omega : p_k \not\Vdash u_n \not\in \dot{C}\} \in \tilde{\mathcal{F}}_I^+$, as we wanted to show. Therefore, $\tilde{\sigma}$ is a winning strategy for Player II in $\mathcal{G}(\tilde{\mathcal{F}}_I)$.
\end{proof}

Since $\tilde{\mathcal{F}}_I$ is non-meager, Player II does not have a winning strategy in $\mathcal{G}(\tilde{\mathcal{F}}_I)$ by Laflamme's Theorem~\ref{thm:laflamme}. Hence, by Lemma~\ref{lemma:game}, we know that Player II does not have a winning strategy in $\mathcal{G}^\star$. 

Fix an enumeration $\langle x_k : k\in\omega\rangle$ of $I$. We define a strategy $\sigma$ for Player II in $\mathcal{G}^\star$ as follows: given a finite sequence $\langle p_0, \ldots, p_k\rangle$ of conditions in $\mathrm{Coll}(\omega_1, {<}\kappa) \cap M$ below $p$, we let $\sigma (\langle p_0, \ldots, p_k\rangle)$ be a condition $q \le p_k$ in $M$ such that, for some $y \in I$ with $x_k \preceq y$, $q \Vdash y \in \dot{C}$---note that such $q$ exists by elementarity of $M$, since $p_k \le p \Vdash ``\dot{C}$ is cofinal in $R$".  

Since $\sigma$ is not a winning strategy in $\mathcal{G}^\star$, there is a decreasing sequence  $\langle p_k : k\in\omega\rangle$ of conditions in $\mathrm{Coll}(\omega_1, {<}\kappa) \cap M$ with $p_0 \le p$ and such that $\sigma(\langle p_0, \ldots, p_k\rangle) \ge  p_{k+1}$ for every $k$ and
\[
\bigcup_{k\in\omega} \{x \in I : p_k \Vdash x \not\in \dot{C}\}\in \mathcal{F}_I.
\]

Let $p_\omega = \bigcup_{k\in\omega} p_k$. Since $p_\omega$ extends every $p_k$,  $\{x \in I : p_\omega \Vdash x \not\in \dot{C}\}\in \mathcal{F}_I$. Pick $y \in R$ such that for every $w \in \pi_I[{\uparrow} y]$, $p_\omega \Vdash w \not\in \dot{C}$. Since $p$ forces $\dot{C}$ to be cofinal in $R$, we can also pick $r \le p_\omega$ and $z \in R$ with $y \preceq z$ such that $r \Vdash z \in \dot{C}$. 

Note that, by the way we defined the strategy $\sigma$, $p_\omega$ forces $\dot{C} \cap I$ to be cofinal in $I$. Hence, there exist $r' \le r$ and a $b \in I$ with $\pi_I (z) \preceq b$ such that $r' 
\Vdash b \in \dot{C}$. But $r'$, which extends $p$, also forces $\dot{C}$ to be a meet-subsemilattice of $R$, and thus $r' \Vdash z \wedge b \in \dot{C}$. By Lemma~\ref{lemma:meetproj} and by the way we chose $b$, $z \wedge b = \pi_I(z) \wedge b = \pi_I(z)$. Hence, $r' \Vdash \pi_I (z) \in \dot{C}$. But $r'$ extends $p_\omega$, which forces $\pi_I (z) \not\in \dot{C}$ by the way we chose $y$. This contradiction completes the proof of Theorem~\ref{thm:maind}.

\section{Questions}\label{sec:questions}

We have shown that after collapsing a Mahlo cardinal $\kappa$ via $\mathrm{Coll}(\omega_1, {<}\kappa)$ there are no $3$-ladders of cardinality $\aleph_2$, and consequently, by Corollary~\ref{cor:induction}, no $4$-ladders of cardinality $\aleph_3$.  It is natural to ask what we can say about $4$-ladders of cardinality $\aleph_2$. Are there such lattices in our model? 

Note that the proof of Theorem~\ref{thm:maind} heavily relies on  Theorem~\ref{thm:chainprojection}, and the statement of Theorem~\ref{thm:chainprojection} ceases to hold true if we replace ``breadth $n$" by ``breadth at most $n+1$". For example, it is not hard to modify Ditor's construction of an uncountable $2$-ladder \cite{MR0732199} in order to construct an uncountable $3$-ladder such that for some countably infinite ideal $I \subseteq L$, no member of $\mathcal{F}_I$ is a  chain. Consequently, our proof of Theorem~\ref{thm:maind} cannot be easily adjusted so as to prove the nonexistence of $4$-ladders of cardinality $\aleph_2$ in our model. We conjecture that the reason behind this is that the existence of $4$-ladders of cardinality $\aleph_2$ actually follows from $\mathsf{CH}$, which holds in our model.

\begin{conjecture}
$\mathsf{CH}$ implies the existence of a  $4$-ladder of cardinality $\aleph_2$.
\end{conjecture}

If the conjecture is true, it would suggest that the passage from $3$-ladders to $4$-ladders marks an unexpected qualitative change in behavior at cardinality $\aleph_2$.

Another natural question is whether it is consistent that there is no lower finite lattice of finite breadth and cardinality $\aleph_2$, regardless of the value of the breadth. A related (slightly weaker) question was already raised in \cite[Question 11]{notaro2026maximalladders}.

\begin{question}\label{q:2}
Assuming the consistency of a Mahlo cardinal, is it consistent that there is no lower finite lattice of finite breadth and cardinality $\aleph_2$?
\end{question}

Note that if our conjecture is true, then $\mathsf{CH}$ would necessarily fail in any model of set theory witnessing a positive answer to Question~\ref{q:2}.

Let us also remark that, as far as we know, it is still open whether the existence of a $3$-ladder of cardinality $\aleph_2$ follows from the existence of a lower finite lattice of breadth $3$ and cardinality $\aleph_2$. Note that not every lower finite lattice of breadth $n$ is an $n$-ladder: consider, for example, the diamond lattice $\mathsf{M}_3$, which has breadth $2$ but is not a $2$-ladder.

\begin{figure}[H]
\begin{tikzpicture}[
    x=1cm,
    y=1cm,
    vertex/.style={circle,fill,inner sep=1.25pt},
    every path/.style={line width=0.45pt,line cap=round}
]

    \node[vertex] (a) at (0,0) {};
    \node[vertex] (b) at ( -1,1) {};
    \node[vertex] (c) at (0,1) {};
    \node[vertex] (d) at (1,1) {};
    \node[vertex] (e) at (0,2) {};

    \draw
        (a)--(b)
        (a)--(c)
        (a)--(d)
        (b)--(e)
        (c)--(e)
        (d)--(e);
\end{tikzpicture}\vspace{0.5em}
\caption{Hasse diagram of $\mathsf{M}_3$}
\end{figure}

\begin{question}\label{q:3}
Does the existence of a $3$-ladder of cardinality $\aleph_2$ follow from the existence of a lower finite lattice of breadth $3$ and cardinality $\aleph_2$?
\end{question}
Note that a positive answer to Question~\ref{q:3} would imply that the theories (2) and (3) in the statement of Corollary~\ref{cor:main} are equivalent, not just equiconsistent. Furthermore, it follows from Lemma~\ref{lemma:basicproj} that every lower finite lattice of breadth $2$ has a cofinal join-subsemilattice which is a $2$-ladder in the induced ordering. This leads to the following strengthening of Question~\ref{q:3}.
\begin{question}\label{q:4}
Given a lower finite lattice of breadth $3$ and cardinality $\aleph_2$, does it have a cofinal join-subsemilattice which is a $3$-ladder in the induced ordering?
\end{question}

Now let us briefly discuss how Theorem~\ref{thm:main} and its proof are situated in the context of \cite{notaro2026maximalladders}. Our main result answers both Question 1 and Question 5 from \cite{notaro2026maximalladders} in the negative, while Question 2 remains open. The relationship between Theorem~\ref{thm:main} and Question 3 from the same work is more subtle, and in order to discuss it, we need to recall the definition of maximal $n$-ladders: an $n$-ladder is \emph{maximal} if it is not isomorphic to a proper ideal of an $n$-ladder \cite[Definition 2.6]{notaro2026maximalladders}. By the second inequality in Ditor's Theorem~\ref{thm:ditor}, every $n$-ladder of cardinality $\aleph_{n-1}$ is maximal. Moreover, given a forcing $\mathbb{P}$, we say that a maximal $n$-ladder $L$ is \emph{$\mathbb{P}$-indestructible} if $\Vdash_\mathbb{P} ``L$ is maximal"---that is, $\mathbb{P}$ preserves the maximality of the $n$-ladder $L$. Question 3 from \cite{notaro2026maximalladders} asks whether every maximal $3$-ladder of cardinality $\aleph_2$ is indestructible by any $\sigma$-closed forcing. A slight generalization of the concluding argument of the proof of Theorem~\ref{thm:maind}---from the end of the proof of Lemma~\ref{lemma:abs} onward---leads to the following result:
\begin{proposition}
If $P$ is a (maximal) $3$-ladder of cardinality $\aleph_2$ such that $\mathcal{F}_I$ is non-meager for some countably infinite ideal $I \subseteq P$, then $P$ is indestructible by any $\sigma$-closed forcing.
\end{proposition}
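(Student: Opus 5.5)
We argue by contradiction, reusing the concluding argument of the proof of Theorem~\ref{thm:maind} with the collapse replaced by an arbitrary $\sigma$-closed forcing $\mathbb{P}$. Suppose some $p\in\mathbb{P}$ forces that $P$ is isomorphic to a proper ideal of a $3$-ladder $\dot{L}$. Identifying $P$ with that ideal, pick a name $\dot{\tilde{x}}$ for an element of $\dot{L}\setminus P$, and let $\dot{C}$ name $\pi_P[\dot{L}\uparrow\dot{\tilde{x}}]$. By Lemma~\ref{lemma:combproj} and Lemma~\ref{lemma:basicproj}, $p$ forces $\dot{C}$ to be a cofinal meet-subsemilattice of $P$ of breadth at most $2$. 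Since $\dot C$ is cofinal in the uncountable lower finite $P$, Ditor's Theorem~\ref{thm:ditor} forces its breadth to be exactly $2$. This is the analogue of \eqref{eq:contr}. A $\sigma$-closed forcing preserves $\omega_1$, so Proposition~\ref{prop:covering} applies to $\dot C$ and $P$ (which has size $\aleph_2\ge\aleph_2$).

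Next, fix a countable $M\prec H(\theta)$ containing $P,\mathbb{P},p,\dot C$, and the given ideal $I$. Put $J\coloneqq P\cap M$, a countably infinite ideal with $I\subseteq J$. By Proposition~\ref{prop:rudinblass} and Lemma~\ref{lemma:preservmeager}, $\mathcal{F}_J$ is non-meager. By Theorem~\ref{thm:chainprojection}, there is a chain $U\in\mathcal{F}_J$; enumerate it increasingly as $\langle u_n\rangle$ and form the non-meager filter $\tilde{\mathcal F}_J$ as before. We then define the game $\mathcal{G}^\star$ with conditions from $\mathbb{P}\cap M$ below $p$. The proof of Lemma~\ref{lemma:game} goes through verbatim. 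It only uses three ingredients: Proposition~\ref{prop:covering} together with elementarity, to define $m(q)$ and to find $p_k\in M$ forcing a finite block of the $u_n$ out of $\dot C$; Laflamme's Theorem~\ref{thm:laflamme}; and the fact that all plays stay in $M$. Hence Player II has no winning strategy in $\mathcal{G}^\star$.

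Finally, take the strategy in which Player II extends to force some element of $\dot C\cap J$ above the $k$-th element of $J$. A losing play yields a decreasing sequence $\langle p_k\rangle$. Here we need a lower bound $p_\omega$, and this is exactly where $\sigma$-closure replaces Lemma~\ref{lemma:collapse}(2). The lower bound satisfies $\{x\in J: p_\omega\Vdash x\notin\dot C\}\in\mathcal F_J$ and forces $\dot C\cap J$ to be cofinal in $J$. The meet argument then concludes as before. Choose $y$ with $p_\omega\Vdash \pi_J[{\uparrow}y]\cap\dot C=\emptyset$. Choose $r\le p_\omega$ forcing some $z\succeq y$ into $\dot C$. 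Then find $r'\le r$ and $b\in J$ with $\pi_J(z)\preceq b$ and $r'\Vdash b\in\dot C$. Using Lemma~\ref{lemma:meetproj}, $r'\Vdash \pi_J(z)=z\wedge b\in\dot C$. This contradicts $\pi_J(z)\in\pi_J[{\uparrow}y]$.

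The main point to check is that nothing in that argument used specific properties of $\mathrm{Coll}(\omega_1,{<}\kappa)$ beyond $\omega_1$-preservation and countable closure. In particular, Proposition~\ref{prop:covering} needs only $|P|\ge\aleph_2$ in the ground model and $\omega_1$-preservation. The one genuinely new step is producing the name $\dot C$ from a hypothetical extension $\dot L$. There, one must make sure that $P$, as an ideal of $\dot L$, makes $\pi_P$ well defined (true since $\dot L$ is lower finite) and that $\dot C$ is cofinal in $P$ (Lemma~\ref{lemma:combproj}(1)).
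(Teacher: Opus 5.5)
Your proposal is correct and is exactly the generalization the paper has in mind: you replace $R$, the tail collapse, and the large lattice $(\kappa,\preceq)$ by $P$, an arbitrary $\sigma$-closed $\mathbb{P}$, and the hypothetical extension $\dot L$. You also rightly observe that the concluding argument uses only $\omega_1$-preservation (for Proposition~\ref{prop:covering}, which needs $|P|\ge\aleph_2$ only in the ground model) and countable closure (for the lower bound $p_\omega$). Your passage from $I$ to $J=P\cap M$ via Proposition~\ref{prop:rudinblass} and Lemma~\ref{lemma:preservmeager} likewise mirrors the paper's passage from $\omega$ to $\mu\cap M$.
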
 
Since not every $3$-ladder of cardinality $\aleph_2$ has a countably infinite ideal $I$ with $\mathcal{F}_I$ non-meager (e.g., a \emph{special} $3$-ladder of cardinality $\aleph_2$ \cite{MR4993406} is a counterexample), the question whether every maximal $3$-ladder is indestructible by any $\sigma$-closed forcing is still open. The preceding proposition nevertheless imposes some constraints on possible witnesses to a negative answer.
\begin{question}[{\cite[Question 3]{notaro2026maximalladders}}]
Is every maximal $3$-ladder indestructible by $\sigma$-closed forcing?
\end{question}

Finally, let us discuss Ditor's Question~A. As noted in Section~\ref{sec:prel:ditor}, if we restrict the question to $\kappa = \aleph_0$, then it has a positive answer when $n = 1,2$ and is independent when $n \ge 3$ (assuming the consistency of a Mahlo cardinal), with the latter statement following from our Theorem~\ref{thm:main}. Moreover, Wehrung \cite{MR2609217} proved that Ditor's Question~A has a positive answer for every $n > 0$ when $\kappa$ is an uncountable regular cardinal. What can be said when $\kappa$ is a singular cardinal? This case is largely open. Ditor singled out the simplest problematic case:

\begin{Dproblem2}
Is there a join-semilattice of breadth $2$ and cardinality $\aleph_{\omega+1}$ whose elements have ${<}\aleph_\omega$ many predecessors?
\end{Dproblem2}

We also expect this second problem to be independent, although establishing this would require large cardinal assumptions far stronger than the existence of a Mahlo cardinal. Indeed, it is shown in \cite{MR4993406} that a positive answer follows from $\square_{\aleph_\omega}$. The failure of $\square_{\aleph_\omega}$ has high consistency strength. A lower bound is given by the consistency of a Woodin cardinal\footnote{If, moreover, $\aleph_\omega$ is a strong limit cardinal, then the failure of $\square_{\aleph_\omega}$ implies $\mathsf{AD}^{\mathbf L(\mathbb R)}$ \cite{MR2194247}, as well as stronger consequences \cite{MR3225589}.} \cite{MR1359965}, while a remarkable recent upper bound is (strictly weaker than) the consistency of a Woodin cardinal that is a limit of Woodin cardinals \cite{blue2026failuresquareuncountablecardinals}.

\printbibliography

\end{document}